\documentclass[11pt,authoryear,a4paper]{article}
\usepackage{amsmath,amsthm,amsfonts}
\usepackage[pdftex]{graphicx}
\usepackage{booktabs}
\usepackage[usenames, dvipsnames]{color}
\usepackage{natbib}  
\usepackage{bm}
\usepackage{multirow}
\usepackage{enumerate}
\usepackage[labelformat=simple]{subcaption}
\usepackage[figuresleft]{rotating}
\usepackage{xcolor}
\usepackage{tikz-cd}
\usepackage{svg}
\usepackage{pdflscape}

\usepackage{url}
\usepackage[section]{placeins}

\usepackage{fullpage}
\usepackage{setspace}

\onehalfspacing

\usepackage{authblk}
\usepackage{algorithm}
\usepackage{algpseudocode}
\usepackage{mathtools}
\usepackage{hyperref}
\hypersetup{
    linkcolor=blue,
    citecolor=blue,
    filecolor=magenta,      
    colorlinks=true, 
    urlcolor=cyan,
    pdftitle={},    pdfpagemode=FullScreen
    }

\newcommand{\II}{\mathbb{I}}
\newcommand{\NN}{\mathbb{N}}
\newcommand{\ZZ}{\mathbb{Z}}

\newcommand{\RR}{\mathbb{R}}

\newcommand{\EE}{\mathbb{E}}

\newcommand{\llceil}{\left\lceil}
\newcommand{\rrceil}{\right\rceil}
\newcommand{\llfloor}{\left\lfloor}
\newcommand{\rrfloor}{\right\rfloor}

\newtheorem{definition}{Definition}
\newtheorem{theorem}{Theorem}
\newtheorem{corollary}{Corollary}
\newtheorem{lemma}{Lemma}
\newtheorem{proposition}{Proposition}

\newtheorem{remark}{Remark}

\theoremstyle{definition}

\makeatletter
\renewcommand\AB@affilsepx{, \protect\Affilfont}
\makeatother

\begin{document}

\title{A Bi-Objective Markov Decision Process Design Approach to Redundancy Allocation with Dynamic Maintenance for a Parallel System}

\author[1]{Luke Fairley}
\author[1]{Rob Shone}
\author[1,2]{Peter Jacko}
\author[3]{Jefferson Huang}
\affil[1]{Lancaster University}\affil[2]{Berry Consultants}\affil[3]{Naval Postgraduate School}
\date{\today} 

\maketitle

\begin{abstract}
    The reliability of a system can be improved by the addition of redundant elements, giving rise to the well-known \emph{redundancy allocation problem} (RAP). We propose a novel extension to the RAP called the Bi-Objective Integrated Design and Dynamic Maintenance Problem (BO-IDDMP) which allows for future dynamic maintenance decisions to be incorporated. This leads to a problem with first-stage redundancy design decisions and second-stage sequential maintenance decisions under uncertainty. To the best of our knowledge, this is the first use of a continuous-time Markov Decision Process Design framework to formulate a problem with non-trivial dynamics, as well as its first use alongside bi-objective optimization. A general heuristic optimization methodology for bi-objective MDP Design problems is developed, and then applied to the BO-IDDMP. The efficiency and accuracy of our methodology are demonstrated against an exact mixed-integer linear programming solver. The heuristic is shown to be orders of magnitude faster in the majority of cases, and in only 2 out of 84 cases produces a solution that is dominated by the exact method. The inclusion of dynamic maintenance policies is shown to yield stronger and better-populated Pareto fronts, allowing more flexibility for the decision-maker. The impacts of varying parameters unique to our problem are also investigated.
\\ \ \\
    \textbf{Keywords:} Maintenance, Reliability, Markov decision processes, integer programming, bi-objective optimization\\
\end{abstract}
\section{Introduction}

Reliability is a feature of utmost importance in critical systems such as those in manufacturing, telecommunication, power generation and distribution, aircraft control, space exploration and satellite systems \citep{KIM201864}. Within such systems, no matter how complex, one can often identify a component or set of components that are critical to the operation of the system. These components together can be represented as a \emph{series system}, or a system that fails overall if only one of its components fails. Reliability can also be linked to the operational costs of a system, where the failure of some component may require the usage of a less desirable and more costly alternative. It is a natural goal to improve the reliability of the system, and this can be achieved in various different ways. One could consider investing in research and development to either manufacture new versions of critical components with higher component-wise reliability, or to entirely redesign the system such that the final result is more reliable overall. However, {\color{black}for many applications, particularly those using discrete component choices (e.g., most electronics),} such an approach would be time-consuming, very costly, and would have diminishing returns. An alternative is to consider the introduction of redundancy into the system, whereby system-critical components are duplicated so that if one fails, the system can switch to using a duplicate component. This gives rise to the \emph{redundancy allocation problem} (RAP).

In this work, we merge the design aspect of the RAP with a dynamic maintenance problem formulated as a Markov decision process (MDP) to create an \emph{Integrated Design and Dynamic Maintenance Problem} (IDDMP). We believe this is the first MDP Design problem that both reflects an application and has non-trivial stochastic dynamics. Additionally, we consider the first bi-objective MDP (BO-MDP) Design problem, where to the best of our knowledge all prior work has been single-objective. On top of this, while most bi-objective RAP literature considers a second objective based on one-off installation costs, or total accumulated cost in finite mission time, our second objective is to minimize the long-run average cost over an infinite horizon, where these costs arise from usage and repair costs. Due to the complexity and novelty of the resulting model, we focus on single-subsystem problems in this paper, as a first step towards application to more general models. 

\subsection{Redundancy Allocation Problem}
\label{paper1:intro:rap}
In the broadest sense, the RAP is an NP-hard problem \citep{CHERN1992309} in which some of the decision variables represent the installation of redundant components in some system, with knapsack-style constraints on the selected components (cost, weight, volume etc), and where the objective (or one of the objectives) is to maximize reliability, or equivalently to minimize the probability of system failure. This is similar to but distinct from reliability allocation, where the structure (number and arrangement of components) is fixed, but each component has a range of alternatives from which to choose \citep{majety1999reliability}. For any component in the original design which is a candidate for further redundancy, we say that it may be replaced by a \emph{subsystem} of parallel components. The RAP has seen many variations throughout the literature. It has seen single-objective \citep{CHERN1992309} and multi-objective \citep{ZOULFAGHARI2014biObjRepairable, KAYEDPOUR2017repairableMarkovFiniteHorizon, tavana2018multistate, LINS2011manyCost} variations, where secondary objectives often represent the installation costs of the components. The types of systems considered range between series-parallel systems \citep{CHERN1992309} which only require one component in each subsystem to be working to fully function, $k$-out-of-$n$ systems which require some $k$ copies of each component to be operational simultaneously \citep{KESHAVARZGHORABAEE2015kOutOfN}, and complex systems with complicated networked relationships between components \citep{park2020complexInforms}. Redundancy strategies may follow a hot, warm, or cold standby strategy, or a mixture of the three. { Under hot standby, all components are considered to be \emph{active} and therefore are subject to the same time-to-failure distributions as if they were being used. Under warm standby, all components are considered to be active, but with a lower rate of failure if they are not being used. Finally, under cold standby only the component currently being used is active, and the rates of failure for all other components are zero. When the active component fails, the system successfully switches to an idle component with some probability, where this is called \emph{perfect switching} if the probability is always one.} System failure occurs when both the active component fails and the switching fails. A mixed strategy keeps some components always active and others on standby \citep{REIHANEH20221112}. Components may be modeled as all non-repairable \citep{kulturelkonak2003nonRep}, all repairable \citep{KAYEDPOUR2017repairableMarkovFiniteHorizon}, or a mix of the two \citep{ZOULFAGHARI2014biObjRepairable}. The RAP may consider a finite \emph{mission-time} \citep{kulturelkonak2003nonRep, KAYEDPOUR2017repairableMarkovFiniteHorizon} or long-run average reliability \citep{CHERN1992309}. On top of the standard allocation decisions, other decisions to be made may include the allocation of repair teams to each subsystem \citep{KAYEDPOUR2017repairableMarkovFiniteHorizon, LINS2011manyCost}, or the order in which component switching occurs \citep{REIHANEH20221112}. Components in the RAP may be binary-state, meaning that they are either fully working or fully damaged with no in-between, or they can be multi-state, allowing for a finer-grain representation of the state of repair of any given component \citep{tavana2018multistate}. The majority of the {\color{black}more recent} literature uses metaheuristics such as genetic algorithms to optimize their respective models; however work has been done using Dantzig-Wolfe decomposition to create column-generation-based heuristics \citep{ZIA2010} or exact solution algorithms via branch-and-price \citep{REIHANEH20221112}.

The specific type of RAP that we are interested in is a bi-objective repairable model for a series-parallel system. \cite{ZOULFAGHARI2014biObjRepairable} proposes a bi-objective model to balance between cost and reliability, using a mixture of repairable and non-repairable components. Repairable components are actively maintained but the cost of performing these repairs is not considered. They optimize their model using a genetic algorithm. \cite{KAYEDPOUR2017repairableMarkovFiniteHorizon} proposes a bi-objective model that balances the availability of the system by a certain mission time against the combined cost of the components and of the repair workers allocated to each subsystem, where the number of repair workers allocated to each subsystem is an additional decision variable. Repairs are carried out actively (i.e. never delayed when repair workers are available), but a repair worker can only repair one component at a time. Component failure and repair times are assumed to be exponential. A \emph{continuous-time Markov chain} (CTMC) model is used to model the stochastic dynamics of this system, and to evaluate its expected availability by the mission time. The model is optimized using NSGA-II, a genetic algorithm specific to multi-objective optimization. \cite{tavana2018multistate} also considers a bi-objective model balancing availability by a certain mission time and installation cost. This is a multi-state model where every component goes through multiple states of degradation before failure. The dynamics of each component are again modeled using a CTMC. There is no limitation on the number of ongoing repairs, nor are there any associated costs. This model is also optimized using NSGA-II.  Finally, \cite{LINS2011manyCost} considers a bi-objective model balancing reliability and cost; however, the cost term is more complicated. In addition to installation cost (called acquisition cost in their work), they also consider operational cost, corrective maintenance cost, repair team cost, and penalty for system failure. Here, operational costs are incurred per unit time from every healthy component under a warm standby model. Corrective maintenance costs are a lump sum paid every time a component is repaired, repair teams are paid per unit time, and there is a penalty incurred per unit time of system failure. Components are always repaired as soon as possible. The model is optimized using a multi-objective genetic algorithm alongside discrete event simulation.

{ Previous literature has explored the integration of the RAP for series-parallel systems with future maintenance decisions as a nonlinear two-stage stochastic programming problem with recourse. \cite{bei2017designAndMaintenanceRAP} introduces such a two-stage problem where first-stage redundancy allocation decisions (allowing for different types of component per subsystem) are taken with uncertainty with respect to \emph{future usage stresses}, which affect the assumed Weibull-distributed lifetimes of the installed components. After the random future usage stress has been realized, second-stage decisions then determine the \emph{preventative maintenance interval} for each subsystem, which is the maximum length of time until a perfect preventative maintenance action is simultaneously and instantaneously applied to all components in a subsystem. If the subsystem fails before this point, an expensive emergency perfect repair must be performed. These perfect maintenance actions allow the stochastic dynamics of the system to be modelled as a renewal process, allowing for the {\color{black}straightforward} calculation of expected maintenance costs per unit time, which in turn are used in the objective function (alongside the installation cost of the components when adjusted by a capital inflation factor). Due to the nonlinear mixed-integer nature of the problem, the authors explore black-box methods (namely a commercial solver called NOMAD) to solve instances of their model, as well as integer rounding based on a continuous relaxation.  \cite{zhu2018designAndMaintenanceSequential} extend this by replacing the one-off usage stresses with stress \emph{functions} that increase over time. The second-stage decisions determine (for each subsystem) the number of times minimal preventative repair will be performed, and how long each of the preventative maintenance intervals will be. First-stage decisions are simplified as there is only one component type per subsystem. To optimize their model, the authors use a decomposition approach where NOMAD is used to explore first-stage decisions, and some theoretical work allows the optimal second-stage decision for any given first stage to be solved by MATLAB's ``fsolve'' function. \cite{BEI2019designAndMaintenanceRiskAverse} return to the simpler one-off stress formulation, and consider a more risk-averse problem formulation that introduces the conditional value-at-risk (CVaR) risk measure. Their solution methodology is similar, with NOMAD used to explore first-stage decisions and a combination of theoretical work and MATLAB's fsolve to optimize second-stage decisions. To the best of our knowledge, these papers are the only previous studies that integrate redundancy allocation with maintenance, and there is no literature on integrating redundancy allocation with dynamic maintenance. This is because, in general, there is very little literature on the integration of MDPs and first-stage design decisions into unified optimization models, let alone any literature that develops suitable, scalable solution methodologies. 

Two recent, thorough literature reviews on RAP by \cite{devi2023RAPlitReview} and \cite{guanCoit2025RAPlitReview} demonstrate that the problem is well established in the literature and very well studied. However, \cite{devi2023RAPlitReview} notes that the majority of the literature focuses on {\color{black}simple numeric examples not motivated by any real system}, and there is a need to look towards real-world applications. Nevertheless, real-world applications have been considered by a number of authors, especially in the energy sector. \cite{kuo2001optimalReliabilityDesign} explores applications such as redundant electrical components in the control system of an aircraft, and redundant piping in a gas pipeline. \cite{ANAND1994pwrCooling} considers redundancy allocation for a pressurized water reactor cooling loop. \cite{SULE2019SafetyCriticalEnergy} considers redundancy allocation with multiple objectives for safety-critical energy systems, directly considering the Sinopec Luoyang Petrochemical Plant in a case study. They consider balancing the number of redundant components against the overall risk of system failure. \cite{ling2021energySystems} explores redundancy allocation for energy systems, with attention drawn to a case study considering biomass power plants. We note that these types of energy systems are repairable, and these repairs can be carried out whilst the functional components continue to operate. As such, the dynamic maintenance strategies that we explore in this work are applicable to these settings.
}
\subsection{Markov Decision Processes}
While some of the RAP literature includes Markov processes to model stochastic dynamics \citep{KAYEDPOUR2017repairableMarkovFiniteHorizon, tavana2018multistate}, to our knowledge none of the RAP literature makes use of MDPs. These are a framework used for modeling sequential decision-making problems under uncertainty, first introduced by \cite{bellman1957markovian}. In essence, MDPs are Markov chains that allow for decisions to be made at each state. For small to moderate problems that can be solved exactly, they are typically solved using one of three techniques: value iteration \citep{bellman1957markovian}, policy iteration \citep{howard1960dynamic}, or linear programming \citep{epenoux1963mdpLP}. A comprehensive account of the theory of discrete-time MDPs is given by \cite{Puterman1994MarkovDP}. Larger problems are solved using approximate methods such as reinforcement learning \citep{sutton2018reinforcement} or approximate dynamic programming \citep{powell2011adp}. An extension of the MDP is the continuous-time MDP, or CTMDP, a thorough exposition of which is given by \cite{xianping2009continuous}. Common applications of MDPs include inventory management \citep{bertsekas2012dynamic}, queuing control \citep{adusumilli2010queue}, and medical applications such as patient admission to hospitals \citep{nunes2009markovpatient} and overflow decisions \citep{dai2019inpatient}. 

MDPs have also been used to model maintenance problems, as small decisions (whether or not to repair, the extent of the repair, etc) can be made sequentially as the system degrades and is repaired over time. Some of this literature includes the condition-based preventative maintenance of systems, such as the work of \cite{chen2005conditionbased} or \cite{amari2006conditionbased}. More recent work considers \emph{partially-observable MDPs} (POMDPs) for joint inspection and maintenance optimization \citep{GUO2022pomdp} or maintenance planning alone \citep{DEEP2023pomdp}. More relevant to the RAP is the application of MDPs to the maintenance of $k$-out-of-$n$ redundancy systems by \cite{flynn2004heuristic}. Their work considers repairs with known deterministic lump-sum costs, a penalty for failure, and instantaneous repairs. The problem is modeled as a discrete-time MDP. Under these simplifying assumptions, they show that the optimal policy is a \emph{Critical Component Policy} under reasonably light assumptions, meaning that every component is either always repaired straight away or never repaired at all, resulting in a very simple policy. However, we find that this does not hold for more the model proposed in this work. Similarly, \cite{ozekici1988replacement} considers periodic age-based replacement of components in multi-component systems, and models this as an MDP.

\subsection{MDP Design Problem}
Rarely seen is the fusion of design decisions, such as those found in the RAP, with the dynamic operation of the system resulting from those design decisions, as modeled by an MDP. The main framework for this is the little-studied \emph{MDP Design} problem. This is a two-stage problem in which the first stage decides upon the system design and the second stage optimizes the resulting system by modeling it as an MDP. To our knowledge, the earliest example of this framework is due to \cite{dimitrov2009mdpDesign}, where for every state-action pair there is a vector cost associated with that state-action pair being available in the second-stage MDP. These vector costs must satisfy knapsack-style constraints. They also extend this model to include uncertainty in whether or not a design decision (i.e. allowing for a certain state-action pair) will truly be realized with that state-action pair being available. \cite{DIMITROV2013mdpDesignMalaria} apply this model to the problem of malaria intervention. An MDP Design framework was independently reformulated by \cite{BROWN2024mdpDesignLetters}. In addition to formulating a \emph{bilevel} problem where design decisions are taken in the first stage before operating the resulting MDP, this work also considers the possibility of uncertainty in the dynamics and costs of the associated MDP. For example, if a design decision reflects the installation of some component in a system, there may be uncertainty in the reliability or costs associated with that component in the second stage. As such, they present a scenario-based approach to deal with this uncertainty. Suggested applications for this model include reliability, inventory management, and queuing design and control. They apply an off-the-shelf bi-level solver to a range of small randomly generated problems that do not reflect any specific application.

To the best of our knowledge, the three papers cited above form the entirety of the MDP Design literature as it stands, leaving much room for future development. We believe this represents an exciting opportunity, as MDP Design problems can be used to find improvements to the designs of real-world systems that will subsequently require sequential decisions to be made under uncertainty. In more general terms, this approach can allow for the integration of strategic and operational decisions in many different contexts. Relevant examples could include (i) an infrastructure system that must first be designed and deployed, then dynamically maintained as it experiences wear-and-tear over time; (ii) a service system (e.g., a hospital or transport hub) consisting of facilities that must first be designed and built before having their queues or waiting lists managed on a day-to-day basis; (iii) a dynamic variant of a location routing problem \citep{drexl2015locationRoutingsurvey} in which depots must be strategically located before vehicle routes are managed on a daily basis \citep{pillac2013dvrpSurvey}. Further development of the general area of MDP Design problems will help to address these sorts of important real-world problems.




\subsection{Problem Overview}
\begin{figure}[tbp]
    \centering
    \includegraphics[width=\textwidth]{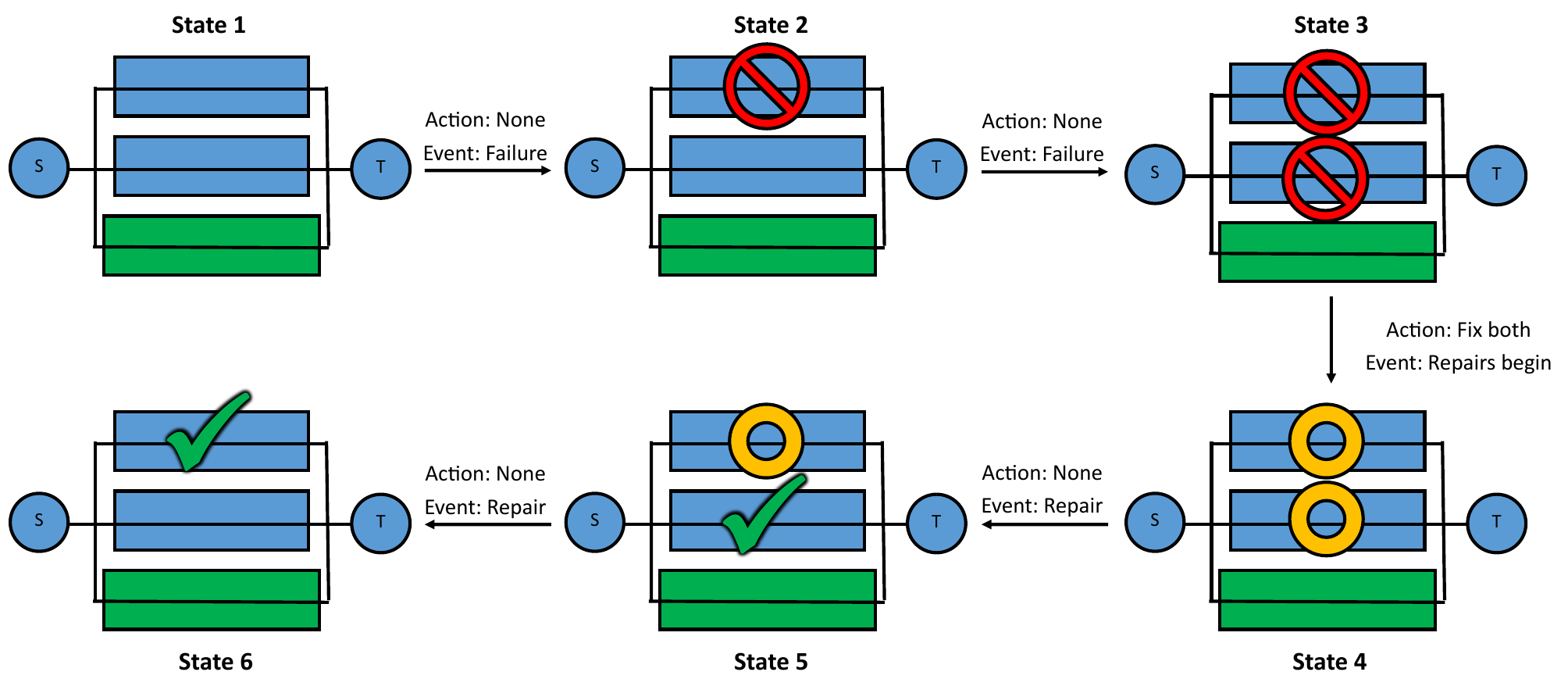}
    \caption{Sample trajectory through the dynamic maintenance problem}
    \label{fig:sampleTraj}
\end{figure}
We present a high-level overview of our problem, aided by a simple example (the detailed problem formulation follows in \autoref{paper1:formulation}). Consider a set of candidate components to be installed in parallel in a single subsystem. Each candidate component incurs usage cost at a constant rate when it is the component with the cheapest usage cost out of all healthy components. In other words, when the component we are currently using fails, we switch to a healthy component with the cheapest usage cost. Also, when a cheaper component comes back online, we switch back to using that component. We assume a {hot} standby strategy with perfect and instantaneous switching, with no or negligible idling costs. {That is, usage costs are only incurred from the one component being used, and no costs are incurred from the idling-yet-active components.} Each component also incurs repair costs at a constant rate while being repaired. These two costs contribute to the long-run operational costs, forming one of our objectives. {The explicit modelling of usage costs in this way is a novel contribution to the RAP literature which allows us to, for example, model a type of ``backup'' component that is more reliable yet less power-efficient or less efficient in the processing of its inputs, making it more costly to run.}  Each component also has a failure rate and a repair rate corresponding to their respective failure and repair completion events. Finally, components also have attributes such as installation costs and weights, which are subject to constraints such as installation budgets and maximum total weights, respectively.

Our first-stage decisions are the quantities of each type of component to be installed. Our second stage decisions are dynamic maintenance decisions on which currently damaged components to start repairing, given the current overall state of the subsystem. \autoref{fig:sampleTraj} illustrates a sample trajectory through this model, considering a subsystem with three components, two of which are the same type, as indicated by the colors (available online). We start in state 1 with all components healthy and no maintenance decisions to be made, so we wait until the first failure event, which moves us to state 2. In this state, we can choose between two actions: fix this component, or do not fix this component. This is the type of situation where we may argue that it is unnecessary to repair this component straight away, since we have two further levels of redundancy. We assume that no repair action is taken, so we now wait until the next failure, taking us to state 3. Now, only one component remains healthy, so the next failure event would cause system failure. There are four possible actions, as each damaged component may or may not be put into repair. In this sample trajectory we decide to repair both, leading us to state 4. Now there are no further actions to take, and we simply wait for the next event, which is the completion of repairs on the second component in state 5. From state 5, there are still no actions to be taken, so we simply wait for the next event, which is the completion of repairs on the first component, leading us to state 6. State 6 is the same as state 1, with all components healthy. States 2 and 3 are most illustrative of the type of decisions we wish to make in the dynamic maintenance problem, where we want to decide which repairs to commence for any combination of components being healthy, damaged, or currently being repaired.

{\color{black}
As a motivating example, consider the case of small-scale power generation. Suppose the owner of some system wishes to generate their own energy to power the system. Available to them are a range of generators all of which generate enough energy for the system, but which differ in terms of costs due to fuel consumption (either due to fuel costs or efficiency), maintenance costs, time-to-failure, repair time, installation cost, and so on. We assume that both long-run operational costs and reliability are important to the system owner, and that they have limits in terms of their upfront installation budget and physical constraints such as weight and size, but that these constraints are relaxed enough to purchase and install multiple redundant generators for the sake of backup. We assume that these backup generators are kept in active (or hot) standby, i.e. they are kept idling (using a negligible amount of fuel) so that when one generator fails, an available generator with the cheapest usage costs can quickly and reliably take over. As such, the power generation fails only if all generators are simultaneously non-operational. Due to the presence of redundant generators, the owner is also interested in avoiding unnecessary maintenance costs, especially as they hope to operate this system well into the future, and maintenance will be needed repeatedly over a long period of time. On the other hand, the owner does not want to sacrifice too much in terms of reliability in the pursuit of saving on maintenance costs. As such, the problem is to simultaneously design a power system with redundancy and determine a maintenance strategy for it, in such a way as to strike a balance between two different objectives.
}

\subsection{Our Contributions}


{In this paper we make the following contributions:
\begin{itemize}
    \item In \autoref{paper1:formulation}, we introduce a novel formulation of an RAP for parallel systems as a bi-objective MDP Design problem, allowing for dynamic decisions to be made regarding maintenance. This is a new contribution to the RAP literature, and is also the first bi-objective MDP Design problem in the literature more generally. It is the first MDP Design formulation that allows for the feasibility of state-action pairs in the second-stage MDP to be affected by multiple first-stage design decisions, and for each first-stage design decision to impact the feasibility of multiple state-action pairs. This extends the formulation of \cite{dimitrov2009mdpDesign}, which only allowed for a one-to-one correspondence between state-action pairs and design variables.
    \item In \autoref{paper1:formulation}, we also propose the novel inclusion of usage costs incurred only when a component is being used, bearing some similarity to the way that costs are incurred in unreliable facility location problems \citep{lim2010facility, lim2012facility} and unreliable $p$-median problems \citep{OHANLEY201363}. Alongside this, we consider component-wise (not repair-team-wise) repair costs.
    \item In \autoref{paper1:methodology}, we introduce a heuristic for approximately solving bi-objective MDP Design problems, which is important due to the intractability of MDP Design problems of the form presented in \autoref{paper1:formulation}.  
    \item In \autoref{paper1:study}, we verify the performance of our heuristic methodology against a standard exact MILP solver, and show that our heuristic performs well in terms of the quality and completeness of the Pareto fronts produced, whilst being much more computationally efficient. We also investigate the effects of varying parameters unique to our model, namely usage costs, repair costs, and the rates at which failure and repair events occur. This includes showing the benefits of using dynamic maintenance policies, which to our knowledge have never been considered in the RAP literature. Specifically, dynamic maintenance policies produce a better populated Pareto front, allowing better flexibility to decision-makers, and can also dominate solutions that are Pareto-optimal under the assumption of a non-dynamic policy.
\end{itemize}}

\autoref{paper1:tab:litReview} highlights our contributions against the relevant prior literature. The structure of the paper is as follows: in \autoref{paper1:formulation} we formulate the problem by formalizing the model shown in \autoref{fig:sampleTraj} as a bi-objective CTMDP model, and then integrating it into a design problem. \autoref{paper1:methodology} builds towards and explains our heuristic methodology for finding an approximate Pareto front of solutions to this problem. \autoref{paper1:study} then undertakes a computational study comparing our heuristic to the exact method, as well as exploring the effect of varying the parameters in our model that are less often seen in the RAP literature, such as usage costs, repair costs and event rates. \autoref{paper1:conclusion} concludes, and proofs are deferred to the supplementary material.

\begin{landscape}
\begin{table}[]
\color{black}
\begin{tabular}{l|cc|ccc|cc|l}
\cline{2-8}
                                             & \multicolumn{2}{c|}{\begin{tabular}[c]{@{}c@{}}Objective \\ Type\end{tabular}} & \multicolumn{3}{c|}{Maintenance Decisions}                                                                                                                                                                 & \multicolumn{2}{c|}{Operational Costs}                                                                                                                                      &                                                                                             \\ \hline
\multicolumn{1}{|l|}{Paper}                  & \multicolumn{1}{l|}{Single}                 & \multicolumn{1}{l|}{Bi}          & \multicolumn{1}{l|}{\begin{tabular}[c]{@{}l@{}}Repair \\ Team \\ Allocation\end{tabular}} & \multicolumn{1}{l|}{\begin{tabular}[c]{@{}l@{}}Static \\ Schedule\end{tabular}} & \multicolumn{1}{l|}{Dynamic} & \multicolumn{1}{l|}{\begin{tabular}[c]{@{}l@{}}Component-wise\\ Maintenance\end{tabular}} & \multicolumn{1}{l|}{\begin{tabular}[c]{@{}l@{}}Component \\ Usage\end{tabular}} & \multicolumn{1}{l|}{\begin{tabular}[c]{@{}l@{}}Solution \\ Methodology\end{tabular}}        \\ \hline
\multicolumn{1}{|l|}{\cite{LINS2011manyCost}} & \multicolumn{1}{c|}{}                       & $\checkmark$                     & \multicolumn{1}{c|}{$\checkmark$}                                                         & \multicolumn{1}{c|}{}                                                           &                              & \multicolumn{1}{c|}{$\checkmark$}                                                         & $\checkmark$                                                                    & \multicolumn{1}{l|}{\begin{tabular}[c]{@{}l@{}}Genetic\\ Algorithm with\\ DES\end{tabular}} \\ \hline
\multicolumn{1}{|l|}{\cite{ZOULFAGHARI2014biObjRepairable}}       & \multicolumn{1}{c|}{}                       & $\checkmark$                     & \multicolumn{1}{c|}{}                                                                     & \multicolumn{1}{c|}{}                                                           &                              & \multicolumn{1}{c|}{}                                                                     &                                                                                 & \multicolumn{1}{l|}{\begin{tabular}[c]{@{}l@{}}Genetic \\ Algorithm\end{tabular}}           \\ \hline
\multicolumn{1}{|l|}{\cite{KAYEDPOUR2017repairableMarkovFiniteHorizon}}         & \multicolumn{1}{c|}{}                       & $\checkmark$                     & \multicolumn{1}{c|}{$\checkmark$}                                                         & \multicolumn{1}{c|}{}                                                           &                              & \multicolumn{1}{c|}{}                                                                     &                                                                                 & \multicolumn{1}{l|}{NSGA-II}                                                                \\ \hline
\multicolumn{1}{|l|}{\cite{bei2017designAndMaintenanceRAP}}               & \multicolumn{1}{c|}{$\checkmark$}           &                                  & \multicolumn{1}{c|}{}                                                                     & \multicolumn{1}{c|}{$\checkmark$}                                               &                              & \multicolumn{1}{c|}{$\checkmark$}                                                         &                                                                                 & \multicolumn{1}{l|}{\begin{tabular}[c]{@{}l@{}}Black-Box;\\ Integer Rounding\end{tabular}}  \\ \hline
\multicolumn{1}{|l|}{\cite{tavana2018multistate}}            & \multicolumn{1}{c|}{}                       & $\checkmark$                     & \multicolumn{1}{c|}{}                                                                     & \multicolumn{1}{c|}{}                                                           &                              & \multicolumn{1}{c|}{}                                                                     &                                                                                 & \multicolumn{1}{l|}{NSGA-II}                                                                \\ \hline
\multicolumn{1}{|l|}{\cite{zhu2018designAndMaintenanceSequential}}               & \multicolumn{1}{c|}{$\checkmark$}           &                                  & \multicolumn{1}{c|}{}                                                                     & \multicolumn{1}{c|}{$\checkmark$}                                               &                              & \multicolumn{1}{c|}{$\checkmark$}                                                         &                                                                                 & \multicolumn{1}{l|}{\begin{tabular}[c]{@{}l@{}}Decomposition\\ with black-box\end{tabular}} \\ \hline
\multicolumn{1}{|l|}{\cite{BEI2019designAndMaintenanceRiskAverse}}               & \multicolumn{1}{c|}{$\checkmark$}           &                                  & \multicolumn{1}{c|}{}                                                                     & \multicolumn{1}{c|}{$\checkmark$}                                               &                              & \multicolumn{1}{c|}{$\checkmark$}                                                         &                                                                                 & \multicolumn{1}{l|}{\begin{tabular}[c]{@{}l@{}}Decomposition\\ with black-box\end{tabular}} \\ \hline
\multicolumn{1}{|l|}{This work}              & \multicolumn{1}{c|}{}                       & $\checkmark$                     & \multicolumn{1}{c|}{}                                                                     & \multicolumn{1}{c|}{}                                                           & $\checkmark$                 & \multicolumn{1}{c|}{$\checkmark$}                                                         & $\checkmark$                                                                    & \multicolumn{1}{l|}{\begin{tabular}[c]{@{}l@{}}Novel\\ heuristic\end{tabular}}              \\ \hline
\end{tabular}
\caption{Comparison of prior works and this work}
\label{paper1:tab:litReview}
\end{table}
\end{landscape}

\section{Problem Formulation}
\label{paper1:formulation}
In this section we formalize our problem by combining bi-objective optimization, continuous-time Markov decision processes and the MDP Design framework to provide a novel bi-objective CTMDP Design problem. {\autoref{paper1:formulation:dynamic} motivates our choice of using a bi-objective CTMDP to model this problem, describes the CTMDP in generality, then uses this framework to model our dynamic maintenance problem.} \autoref{paper1:formulation:int} then extends this model to include design variables subject to knapsack-style constraints, and links the permitted states of the CTMDP to the design decisions made.

\subsection{Dynamic Maintenance Problem}
\label{paper1:formulation:dynamic}

We first focus on how to model our \emph{Bi-Objective Dynamic Maintenance Problem} (BO-DMP) as a CTMDP, assuming a fixed design initially. {To begin, we discuss why bi-objectivity and dynamic maintenance are important to consider. In a single-objective formulation where the goal is just to maximize system reliability, the notion of dynamic maintenance---and the inclusion of operational costs---does not make sense, as the goal is only to improve system reliability, regardless of how frequent or expensive maintenance actions are. However, the solution to such a problem may be \emph{overly} reliable if the constraints on the problem are quite relaxed, leading to an optimal system design that is very reliable, but costly to maintain and operate. A bi-objective formulation allows us to find a set of solutions that balance between operational costs and system reliability. More specifically, a Pareto front of solutions allows a decision maker to investigate how reasonable sacrifices in reliability can lead to a system with reduced long-run operational costs. This consideration of operational costs is also where the idea of using dynamic maintenance arises, as it leads to the question of whether repairing components immediately is always worthwhile, or whether it may sometimes be better to wait until the system is in a worse state overall. In the latter case, delays in repairs and a less aggressive maintenance strategy may enable accumulated cost savings over the lifespan of the system. We now follow with the mathematical modelling of our problem.}  

We start by defining the CTMDP in general and introduce the relevant notation, and then define the states, actions, transitional behavior and costs of the BO-DMP. This is followed by a linear programming (LP) model which can easily be solved to optimality for realistic, moderately-sized instances.

Formally, an CTMDP is a 4-tuple:$$\mathcal{C} = \left\langle \mathcal{S}, (\mathcal{A}(s))_{s\in\mathcal{S}}, \left(q(s,a,s')\right)_{s,a,s'\in \mathcal{S},\mathcal{A}(s),\mathcal{S}}, \left(c(s,a)\right)_{s,a\in \mathcal{S},\mathcal{A}(s)}  \right\rangle,$$
where $\mathcal{S}$ is the state space, $\mathcal{A}(s)$ is the feasible action space for state $s\in\mathcal S$, $q(s,a,s')$ gives the transition rate from state $s$ to state $s'$ when taking action $a$, and $c(s,a)$ is the vector cost rate when taking action $a$ in state $s$. This definition is similar to that of \cite{Puterman1994MarkovDP}.

We restrict attention to deterministic stationary (DS) policies $\mu:\mathcal{S}\to\mathcal{A}$, giving maps from states to actions. For a fixed policy $\mu$, we can define the $T$-horizon cost-to-go or value function, where $T$ is some positive constant, $J^\mu_T : \mathcal{S}\to\RR$:
$$J_T^\mu(s) = \EE_\mu\left\{\left.\int_0^T c\left(\,S(t),\,\mu(S(t))\,\right) \text{d}t\,\right|\,S(0) = s\right \},$$
where $S(t)$ is a random variable representing the state of the process at time $t$. This represents the expected cost accumulated over $T$ units of time when starting the process in some state $s$ and controlling the process using policy $\mu$. Using this, we can then define the long-run average cost $g^\mu$ as follows:
$$g^\mu(s) = \lim_{T\to\infty} \frac{1}{T}\,J_T^\mu(s).$$

In the case of so-called \emph{unichain} MDPs, the long-run average cost $g^\mu(s)$ under any policy $\mu$ is independent of the starting state $s$. Additionally, under the more relaxed assumption that the MDP is \emph{weakly communicating}, the long-run cost $g^*$ under the optimal policy is independent of starting state \citep{Puterman1994MarkovDP}. We restrict our attention to such problems and later justify this with \autoref{theorem:communicat}. In the single objective case, we wish to find a policy $\mu^*$ that minimizes $g^{\mu^*}$. In the multi-objective case, we wish to find non-dominated policies $\mu$ where there does not exist a policy $\mu'$ such that $g^{\mu'} \leq g^{\mu}$ component-wise and $g^{\mu'}_k < g^{\mu}_k$ for some entry (objective) $k$. 


For the BO-DMP, we define the state $\mathbf{s}\in\ZZ^{N\times2}$ of the CTMDP as an $(N\times 2)$-dimensional matrix representing the number of components of each type in either a repairing or damaged condition, where $N$ is the number of component types. Each row $s_i$ for $s$ is of the form
$$s_{i} = [s_{i1},s_{i2}],$$
where $s_{i1}$ is the number of components of type $i$ currently being repaired, and $s_{i2}$ is the number of components of type $i$ currently damaged and not being repaired. We assume that the total number of copies of component $i$ is known and given as $M_i$, so the number of healthy copies of component $i$ in any given state is $M_i - s_{i1} - s_{i2}$, which we denote by $s_{i0}$ for notational convenience. It is simple to see that $s_i$ can take on $M_i(M_i + 1)/2$ possible values, as we require $s_{i1} + s_{i2} \leq M_i$.
As such, the size of the state space is $|\mathcal{S}| = \prod_{i=1}^N M_i(M_i + 1)/2$, so our state space grows quadratically in the number of copies of any given component, and exponentially in the number of types of a component with at least 1 component installed.

The actions $\mathbf{a}\in\ZZ^N$ are vectors whose components $a_{i}$ are decision variables determining how many components of type $i$ to put into repair. For each state, we define the feasible action set $\mathcal{A}(\bm{s}) = \{\bm{a}\in\ZZ^N:0\leq a_i \leq s_{i2}\}$, as we can only repair components up to the number of damaged components of a specific type. For ease of notation, we write $\bm{s}\oplus\bm{a}\in\ZZ^{N\times2}$\ to denote the post-decision state (that is, the state immediately after the action is taken, but before any new events are observed, following \cite{powell2011adp}), which satisfies $[\bm{s}\oplus\bm{a}]_i = [s_{i1} + a_i, s_{i2} - a_i]$. We assume that these actions are \emph{impulsive}, meaning the effect of taking an action is instantaneous. For clarity, this does not mean that the repair itself is instantaneous, but that the process of starting a repair is instantaneous. We achieve this by enforcing the following relations when $\bm{a}\neq\bm{0}$:
\begin{align*}
    q(\bm{s},\bm{a}, \bm{s'}) &= q(\bm{s\oplus a, 0, s'}), \text{ for } \bm{s' \neq s, s' \neq s\oplus a},\\
    q(\bm{s,a,s\oplus a}) &= 0,\\
    q(\bm{s,a,s}) &= -\sum_{\bm{s'\neq s}} q(\bm{s,a,s'}),\\
    c(\bm{s}, \bm{a}) &= c(\bm{s\oplus a, 0}).
\end{align*}
This treats the state-action pair in such a way that it emulates the behavior of being in the post-action state under the zero action. Due to the separate treatment of non-zero actions, we can more succinctly write $q(\bm{s},\bm{s'}) := q(\bm{s},\bm{0},\bm{s'})$ and $c(\bm{s}) := c(\bm{s}, \bm{0})$.

Outside of the immediate effects of actions, we also have two types of event which cause a state transition:
\begin{itemize}
    \item \textbf{Failure} - The failure of a component, which occurs at a constant failure rate whenever the component is active. The interarrival times of such events are assumed to be independent and exponentially distributed with rate $\alpha_{i} > 0$ for component $i$. 
    \item \textbf{Successful Repair} - The event that changes the condition of a repairing component so that it becomes a healthy component. We assume that ongoing repairs on each component are carried out simultaneously and independent of each other by multiple repair workers, and therefore repair completion times are independent and exponentially distributed with rate $\tau_i > 0$ for component $i$.
\end{itemize}

We can then define the transition rates $q$ under the zero action as follows:

$$q(\bm{s}, \bm{s'}) = \left\{
\begin{array}{cc}
     s_{i0}\alpha_i, &\bm{s'} = \bm{s} + \bm{e}_{i2},\\
     s_{i1}\tau_i , &\bm{s'} = \bm{s} - \bm{e}_{i1},\\
     -\sum_{i=1}^N\left(s_{i1}(\tau_i + \alpha_i) + s_{i0}\alpha_i\right) & \bm{s'} = \bm{s},\\
     0, & \text{otherwise,}
\end{array}\right.$$
where $\bm{e}_{ij}$ is a matrix with dimensions the same as the state matrix with zeros everywhere except index $ij$, where it is 1. The first line gives the rate of a healthy component of type $i$ failing and becoming damaged, and the second line gives the rate of a repairing component of type $i$ completing its repair and becoming healthy. The third line then gives the overall negative sojourn rate of the state $\bm{s}$, and the fourth line shows that all other transitions have rate 0.

The cost rates of the BO-DMP should incorporate utilization costs, repair costs, and system availability.  The cost rates are therefore made up of three components:
\begin{itemize}
    \item \textbf{Component Utilization} - Each component $i$ has an associated cost $c_i$ per unit time to use the component, incurred only when it is the cheapest one available. The system is utilized optimally by always using the cheapest healthy component {(recall that using a component does not increase its failure rate, so using the cheapest component does not increase the chance of needing to pay repair costs). Therefore we always incur usage costs $c^u(\bm{s})=\min_{i:s_{i0}>0}\left\{c_i\right\}$.}
    
    \item \textbf{System Failure} - If the system contains no healthy components, we incur a failure cost at rate 1 (i.e. this serves as an indicator variable for system failure).
    
    \item \textbf{Repair Costs} - If component $i$ is currently under repair then we incur a cost at rate $r_i$ per unit time. We denote this by $c^r(\bm{s})=\sum_{i=1}^Nr_is_{i1}$.
\end{itemize}

Whilst utilization and repair costs can simply be added together due to being in comparable units, the availability of the system may not have a direct monetary value. As such, we treat system availability as a separate cost/objective, giving two separate objectives. Symbolically, we have an operational cost rate
$c^o(\mathbf{s}) = c^u(\bm{s}) + c^r(\bm{s})$, and a failure cost rate
$c^f(\bm{s}) = \prod_{i=1}^N \II\{s_{i0} = 0\}$.

We can model the BO-CTMDP using linear programming formulations, which when converted to a single objective problem can be solved using any LP solver. However, the LP formulation depends on certain properties of the CTMDP. The simplest LP model of an MDP is most applicable if the MDP is \emph{unichain}, meaning that for any DS policy over the MDP, the induced Markov chain has at most one recurrent class. A more general quality is that of \emph{communicating}, meaning that for every pair of states $s,s'\in\mathcal{S}$, there exists a policy $\mu$ that {\color{black}transitions} from state $s$ to state $s'$ with non-zero probability in finitely many steps. A further generalization of this is \emph{weakly communicating}, which allows for the existence of a (possibly empty) subset of states which are transient under every policy, and therefore are excluded from the choices of $s'$ in the original condition. For weakly communicating MDPs, we can use the unichain LP formulation to obtain an optimal solution, with the caveat that the solution will not specify actions for the transient states under this optimal policy. A thorough explanation of the differences between optimizing unichain and multichain MDPs is given by \cite{Puterman1994MarkovDP}. We {provide a theoretical result} with \autoref{theorem:communicat}, demonstrating that the MDP we have formulated is weakly communicating, and thus the simpler LP formulation is suitable.

\begin{theorem}
\label{theorem:communicat}
    DMP is weakly communicating for all $N>0$. However, there exist $N$ and $M_i$ for $i=1,...,N$ such that DMP is not unichain.
\end{theorem}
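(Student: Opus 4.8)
The plan is to reduce both claims to a combinatorial analysis of the directed \emph{reachability graph} $G$ on the observed (pre-decision) states, in which we draw an edge $\bm s \to \bm t$ whenever $q(\bm s, \bm a, \bm t) > 0$ for some feasible $\bm a \in \mathcal A(\bm s)$ with $\bm t \neq \bm s$. The crucial structural observation is that, because actions are impulsive (so $q(\bm s, \bm a, \bm s\oplus\bm a)=0$ and the rates out of $\bm s$ under $\bm a$ coincide with those of the post-decision state $\bm s\oplus\bm a$ under $\bm 0$), a state $\bm t$ is entered only through a genuine failure or repair event out of some valid post-decision state. Writing $s_{i0}=M_i-s_{i1}-s_{i2}$, I would first show that $\bm t$ admits an incoming failure edge iff $t_{i2}\ge 1$ for some $i$, and an incoming repair edge iff $t_{i0}\ge 1$ for some $i$; hence the unique state with no incoming edge at all is the ``all-repairing'' state $\bm R$ defined by $R_{i1}=M_i$, $R_{i2}=0$ for every $i$. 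Since $\bm R$ is never entered from any other state under any policy, it is transient under every stationary policy, and it is the natural candidate for the excluded transient set in the definition of weak communication. (This also shows the process is \emph{not} communicating, so ``weakly communicating'' is the sharpest statement available.)

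For the first claim I would prove that $G$ restricted to $\mathcal S_C := \mathcal S\setminus\{\bm R\}$ is strongly connected, using the all-healthy state $\bm h$ (with $h_{i0}=M_i$) as a hub. Reaching $\bm h$ from any $\bm s\in\mathcal S_C$ is the easy direction: put every damaged component into repair with one impulsive action and then let repair events fire one at a time (taking $\bm a=\bm 0$ thereafter), which drives every $s_{i1}$ and $s_{i2}$ to zero along a path whose observed states all have at least one non-repairing component and hence never equal $\bm R$. The harder direction, $\bm h\to\bm t$ for arbitrary $\bm t\in\mathcal S_C$, is where the impulsive formalism bites: an action cannot itself ``land'' the chain on $\bm t$, so the path must end in a genuine event into $\bm t$. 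I would handle this by first generating the right number of failures from $\bm h$ to reach an intermediate state $\bm u$ with $u_{i1}=0$ and a prescribed number of damaged components of each type, and then taking a single action at $\bm u$ whose post-decision state is the event-predecessor of $\bm t$, so that one further failure (if some $t_{i0}=0$) or repair (if some $t_{i0}\ge 1$) completes the transition onto $\bm t$; a short case split on whether $\bm t$ retains a healthy component fixes exactly how many extra failures to create at the first stage. Once strong connectivity of $\mathcal S_C$ is established, for each ordered pair I would extract a simple directed path and define a stationary policy that selects, at each (distinct) vertex of the path, an action realising the corresponding edge; under this policy the target is reached with positive probability in finitely many jumps, which is precisely the accessibility condition in the definition of weak communication.

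For the second claim it suffices to exhibit one instance and one policy inducing at least two recurrent classes. The key enabling fact is that the all-damaged state $\bm d$ (with $d_{i2}=M_i$) has total exit rate zero under the action $\bm 0$ and is therefore absorbing whenever the policy chooses not to repair there. I would take $N=1$, $M_1=2$ and the policy that repairs nothing in $\bm d=(0,2)$ but repairs aggressively elsewhere (e.g.\ $\mu((0,1))=1$, $\mu((1,1))=1$, and $\mu=0$ on the remaining states); a direct check of the six states shows that $\{(0,0),(0,1),(1,0),(1,1)\}$ is closed and irreducible under this policy and never reaches $(0,2)$, so the induced chain has the two disjoint recurrent classes $\{(0,0),(0,1),(1,0),(1,1)\}$ and $\{(0,2)\}$. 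Hence the process is multichain for these parameters, establishing the second sentence.

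The main obstacle I anticipate is the $\bm h\to\bm t$ reachability step: because of the impulsive-action convention, reaching a prescribed \emph{observed} state is not the same as reaching its post-decision version, and one must carefully arrange for the final transition into $\bm t$ to be a bona fide failure or repair event. The bookkeeping of how many components of each type to damage before the terminal action, and verifying feasibility of that action (i.e.\ not exceeding the available damaged count), is the delicate part; everything else reduces to the incoming-edge characterisation and a standard simple-path-to-policy argument.
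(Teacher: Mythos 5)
Your proposal is correct: both halves would go through as sketched. It shares the same overall skeleton as the paper's proof --- isolate the all-repairing state as the unique state that is transient under every policy, establish mutual reachability of all remaining states by a constructive hub argument, and exhibit an explicit policy with two recurrent classes --- but the execution differs in ways worth noting. For weak communication, the paper uses the \emph{all-damaged} state as its hub: it is reached from anywhere by the do-nothing policy, and from it a single repair action (with positive probability of the ``right'' subset finishing first) reaches any target with a healthy component; targets with no healthy component are handled by an ad hoc amendment in which one extra component is repaired and then fails again. You instead use the \emph{all-healthy} state as the hub, reached by repairing everything, and you resolve the leapfrog subtlety uniformly via your incoming-edge characterization: every entry into a state must be a genuine failure or repair event, so you steer to the event-predecessor of the target with one terminal action and let a single event land on it. That characterization is a cleaner lemma than the paper's case handling, and it also pins down the all-repairing state as the \emph{unique} non-enterable state, which the paper argues separately. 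For the multichain half, your instance is smaller and arguably sharper: with $N=1$, $M_1=2$ you exploit the fact that the all-damaged state has zero exit rate and is absorbing under a lazy policy, giving recurrent classes $\{(0,2)\}$ and $\{(0,0),(0,1),(1,0),(1,1)\}$; the paper instead needs $N=2$ ($M_1=1$, $M_2=2$) and builds two disjoint ``operational'' recurrent classes, each actively maintaining one component type while ignoring the other. Your example shows the multichain phenomenon already occurs with a single component type, while the paper's illustrates that it can arise between two live maintenance regimes rather than via an absorbing failure state; both are valid witnesses for the theorem as stated.
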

\begin{proof}
    See Supplementary Material, Section A.1.
\end{proof}

As a result of \autoref{theorem:communicat}, we can use the most simple LP formulation of the CTMDP, following \cite{xianping2009continuous}. Our first formulation is called the \emph{Bi-Objective Dynamic Maintenance Problem}, or BO-DMP:
\begin{align}
    &\text{(BO-DMP)}&\min_\pi\,g^o = &\sum_{\bm{s}\in \mathcal{S}}\sum_{\bm{a}\in \mathcal{A}(\bm{s})}c^o(\bm{s},\bm{a})\pi(\bm{s},\bm{a}) \label{mo-dmp-obj1}\\
    &&\min_\pi\,g^f = &\sum_{\bm{s}\in \mathcal{S}}\sum_{\bm{a}\in \mathcal{A}(\bm{s})}c^f(\bm{s},\bm{a})\pi(\bm{s},\bm{a})\label{mo-dmp-obj2}\\
    &&\text{s.t.} & \sum_{\bm{s}\in \mathcal{S}}\sum_{\bm{a}\in \mathcal{A}(\bm{s}')}q(\bm{s},\bm{a},\bm{s}')\pi(\bm{s},\bm{a}) = 0,\,\forall \bm{s}'\in \mathcal{S},\label{mo-dmp-balance}\\
    &&& \sum_{\bm{s}\in \mathcal{S}}\sum_{\bm{a}\in \mathcal{A}(\bm{s})}\pi(\bm{s},\bm{a}) = 1\label{mo-dmp-sumto1},\\
    &&& \pi(\bm{s},\bm{a}) \geq 0,\,\forall \bm{s}\in \mathcal{S}, \bm{a}\in \mathcal{A}(s). \label{mo-dmp-pos}
\end{align}

Each decision variable $\pi(\bm{s,a})$ in the above formulation is a state-action frequency, which represents the proportion of time spent in state $\bm{s}$ and taking action $\bm{a}$. Objectives (\ref{mo-dmp-obj1}) and (\ref{mo-dmp-obj2}) are the operational costs and system failure objectives, respectively. Constraints (\ref{mo-dmp-balance}) are the CTMDP equivalent of the well-known balance equations associated with standard CTMCs. Constraints (\ref{mo-dmp-sumto1}) and (\ref{mo-dmp-pos}) ensure that $\pi$ is a valid probability distribution. This rather general formulation of a bi-objective CTMDP requires additional assumptions that we are using a stationary policy, and that the CTMDP is weakly communicating. While we have proven the latter for our case, we must make the argument for the former. While it is known that DS optimal policies always exist for the single-objective case, and therefore attention can be limited to such policies, the same doesn't hold for multi-objective MDPs. Following \cite{WHITE1982momdpNonStationary}, non-stationary policies may offer Pareto-optimal solutions that strike a unique balance between objectives that cannot be emulated by DS policies. However, their unique objective values can be emulated by a stochastic stationary policy \citep{roijers2013momdp_survey}. These Pareto-optimal stochastic policies will correspond to points on the facets of the polytope defined by \eqref{mo-dmp-balance}-\eqref{mo-dmp-pos}, which in turn means they can be interpreted as stochastic interpolations of the basic feasible solutions (i.e. DS policies). However, in practice, these stochastic policies may be considered unintuitive, untrustworthy, or difficult to interpret. This is especially the case for areas such as reliability, where the suggestion of making random maintenance decisions could be treated with suspicion. Additionally, for problems of even a moderate size, the DS policies alone provide a good amount of flexibility with respect to balancing between the two objectives. As such, we will limit ourselves to finding DS policies that are non-dominated with respect to stochastic stationary policies, so that DS policies that are dominated by some stochastic stationary policy will be ignored. To ensure that we only find such solutions, we will scalarize our bi-objective program using the mixed objectives method, yielding a single-objective problem with no additional constraints. As such, a solution to a scalarized problem as returned by a DP method or LP solver will be a basic feasible solution, i.e. a DS policy. To do this, we introduce a penalty parameter $p$ to introduce a virtual monetary cost associated with system failure, and add this to our first objective. This yields the problem we call $p-$DMP:

\begin{align}
    &\text{($p-$DMP)}&\min_\pi\, &g = \sum_{\bm{s}\in \mathcal{S}}\sum_{\bm{a}\in \mathcal{A}(\bm{s})}\left(c^o(\bm{s,a}) + pc^f(\bm{s,a})\right)\pi(\bm{s,a}) \label{p-dmp-obj1}\\
    &&\text{s.t. } &\text{(\ref{mo-dmp-balance}) - (\ref{mo-dmp-pos})}.
\end{align}
This problem provides a scalarized version of BO-DMP, and is equivalent to a weighted sum single objective. We can solve this problem for different values of $p$ to obtain a set of efficient solutions that correspond to DS policies. As $p-$DMP is a single-objective LP, it can be solved easily using any LP solver as long as the state-action space is of a manageable size. 

\subsection{Integrated Design and Dynamic Maintenance Problem}
\label{paper1:formulation:int}
In the \emph{Integrated Design and Dynamic Maintenance Problem} (IDDMP), we wish to simultaneously optimize both the design variables of the redundant system and the dynamic policy used to maintain it, with respect to knapsack-style constraints $Ax\leq b$ (with all entries in $A$ and $b$ non-negative). To do this, we must first construct a state space that contains the maximum number of copies of each component with respect to the constraints:
$$\mathcal{S}^{\max} = \bigtimes_{i=1}^N \mathcal{S}_i,$$
where $\mathcal{S}_i =  \left\{(s_1,s_2)\in\NN_0^2:s_1 + s_2 \leq M_i\right\}$ gives the maximal one-component state space for component $i$ and $M_i = \min_i\{b_j/a_{ji}\}$ gives the maximum number of copies of component $i$ with respect to the knapsack constraints. Using this, we may extend $p$-DMP to include design decision variables as follows:
\begin{align}
    &\text{($p-$IDDMP)}&\min_{\pi,x}\, &g = \sum_{\bm{s}\in \mathcal{S}^{\max}}\sum_{\bm{a}\in \mathcal{A}(\bm{s})}\left(c^o(\bm{s},\bm{a}) + pc^f(\bm{s},\bm{a})\right)\pi(\bm{s},\bm{a}) \label{p-iddmp-obj}\\
    &&\text{s.t. } &\text{(\ref{mo-dmp-balance}) - (\ref{mo-dmp-pos})} \label{p-iddmp-const}\\
    &&& Ax\leq b \label{p-IDDMP-knap}\\
    &&& \sum_{\bm{s}\in \mathcal{S}^{\max},\bm{a}\in\mathcal{A}(s): s_{i2} - a_i \leq M_i - j}\pi(\bm{s},\bm{a}) \leq x_{ij} \text{ for }i=1,2,...N,\,j=1,...,M_i\label{p-IDDMP-link}\\
    &&&x_{i(j+1)} - x_{ij} \leq 0 \text{ for }i=1,...,N,\,j=1,...,M_i - 1\ \label{p-IDDMP-sym}\\ 
    &&& x_{ij}\in\{0,1\} \text{ for }i=1,2,...N,\,j=1,...,M_i \label{p-IDDMP-bin}.
\end{align}

We introduce binary variables $x_{ij}$ to represent our design decisions, where $x_{ij} = 1$ represents the decision to install the $j$th copy of component $i$. Objective function (\ref{p-iddmp-obj}) and constraints (\ref{p-iddmp-const}) are carried over from $p$-DMP, with $\mathcal{S}$ replaced by $\mathcal{S}^{\max}$. Constraints (\ref{p-IDDMP-knap}) are the knapsack-style constraints over the design variables. Constraints (\ref{p-IDDMP-link}) ensure that the probability of having more than $j$ healthy or repairing copies of component $i$ is limited by $x_{ij}$. This means that the probabilities are unconstrained if $x_{ij} = 1$, or are all forced to be zero if $x_{ij} = 0$. This works on the principle that having some number of damaged components that cannot ever be repaired is equivalent to not having those components at all. Constraints (\ref{p-IDDMP-sym}) enforce the logic that if you do not install the $j$th copy of component $i$, then you cannot install the $(j+1)$th copy. Constraints (\ref{p-IDDMP-bin}) ensure that the $x_{ij}$ variables are binary. We note that BO-DMP can be extended in a similar way to obtain BO-IDDMP, but we omit the formulation for brevity. 

While the formulation of $p$-IDDMP appears to be quite general and applicable to other problems, we must take note of its limitations. It requires that the large underlying MDP over the state space $\mathcal{S}^{\max}$ is weakly communicating, and one would have to adapt the multichain LP of \cite{Puterman1994MarkovDP} to the design setting to get a more general framework, which may require additional care. Also, our model relies on the assumption that we are in the long-run average optimality setting. In the discounted setting, the sum of the variables is bounded above by $M_\gamma = 1/
(1-\gamma)$, where $\gamma$ is the discount factor. This large constant would have to be integrated into constraint \eqref{p-IDDMP-link}, leading to a far weaker constraint in the linear relaxation. Further discussion of a variant of this big-M constraint in the context of discounted MDP Design problems is given by \cite{BROWN2024mdpDesignLetters}.
\begin{table}[tbp]
    \centering
    \begin{tabular}{cc}
    \hline
     \textbf{Notation} & \textbf{ Description }  \\
    \cmidrule(lr){1-1}
    CTMDP & \\
    \cmidrule(lr){1-1}
    $\bm{s}$ & State vector\\
    $\bm{a}$ & Action vector\\
    $\mu$ & Policy\\
    $g_\mu$ & Long-run average cost under policy $\mu$\\
    $q(\bm{s,a,s'})$ & Transition rate from state $\bm{s}$ to $\bm{s'}$ under action $\bm{a}$\\
    $c(\bm{s})$ & Cost rate of state $\bm{s}$\\
    \cmidrule(lr){1-1}
    Problem &\\
    Parameters  & \\
    \cmidrule(lr){1-1}
    $N$ & Number of types of component\\
    $M_i$ & Maximum number of copies of component type $i$\\
    $\alpha_i$ & Rate of failure for component $i$\\
    $\tau_i$ & Rate of repair for component $i$\\
    $c_i$ & Usage cost rate of component $i$\\
    $r_i$ & Repair cost rate of component $i$\\
    $p$ & The penalty incurred for system failure\\
    \hline
    \end{tabular}
    \caption{Table of Notation}
    \label{tab:notation}
\end{table}
We recognize that $\mathcal{S}^{\max}$ grows quadratically in the number of allowed copies of any given component, and grows exponentially in the number of component types. As such, the number of continuous state-action frequency variables $\pi$ and the number of equality constraints from the balance equations grow very rapidly with the problem size (more specifically, the number of component types and the leniency of the knapsack constraints). {However, many state-action pairs will be infeasible under any design, as they represent a design that violates the knapsack constraints. As such, a more careful construction of $\mathcal{S}^{\max}$ and of $\pi$ will yield a much reduced set of variables. Further details can be found in the supplementary material Section A.2.} The problem is also complicated by the presence of binary decision variables, leading to a very large-scale mixed integer linear program (MILP). As such, we cannot hope to solve this problem directly for problems even of moderate size, and must design some bespoke solution methodology. \autoref{paper1:methodology} follows with the introduction of our heuristic methodology for approximating a set of Pareto-optimal solutions for BO-IDDMP. 
The notation introduced in this section is summarized in \autoref{tab:notation}.

{
\section{Methodology}
\label{paper1:methodology}
In this section we present a {novel yet simple heuristic solution methodology for BO-MDP Design problems}. \autoref{paper1:methodology:app} introduces the method, which we call the Approximate Pareto Population (APP) method, in {full generality, and provides a simple bound on performance.} \autoref{paper1:methodology:InterpApplic} provides an interpretation of how this algorithm works in general and in context of our problem, and explores how it might be applicable to other problems. \autoref{paper1:methodology:dop} analyses the main sub-problem that arises when we apply APP to BO-IDDMP, and \autoref{paper1:methodology:alg} gives an overview and illustration of the algorithm.
\subsection{Approximate Pareto Population}
\label{paper1:methodology:app}
APP is a novel heuristic framework for BO-MDP Design problems.  It is a population- and decomposition-based method that breaks down a BO-MDP Design problem into smaller sub-problems. We provide the method in full generality here as it could be applicable to other problem settings as a heuristic or benchmark, however the remainder of the paper will focus on its application to BO-IDDMP. The general form for a BO-MDP Design problem can be represented as a master problem MP:
\begin{align*}
    (\text{MP}) \quad\min_{x,\mu} & f^1(x,\mu)\\
                \min_{x,\mu} & f^2(x,\mu)\\
                \text{s.t. } & x\in X, \mu\in\mathcal{M}(x).
\end{align*}
Here, $x$ is the decision vector with feasible region $X$, and we assume that for each decision there is an associated MDP (or CTMDP) $\mathcal{C}(x)$ over which there is a full space of (potentially stochastic and non-stationary) policies $\mathcal{M}(x)$. The two objective functions $f^1$ and $f^2$ both take the design and policy as inputs. 

The key idea of APP is that we first generate a finite set of candidate designs $X^*$ which strike different balances between the two objectives. Suppose that for each design $x$ we have an associated set of heuristic solutions $H_x\subset\mathcal{M}(x)$. By restricting MP to these heuristic solutions, we obtain our first subproblem SP1.
\begin{align*}
    (\text{SP1})\quad \min_{x,\mu}   & f^1(x,\mu)\\
                \min_{x,\mu}   & f^2(x,\mu)\\
                \text{s.t. } & x\in X, \mu\in H_x. 
\end{align*}

{The quality of solutions found by SP1 is of course determined by the chosen heuristics $H_x$. Based on the quality of these heuristics, we can bound the performance of SP1.
\begin{proposition}
    Let $f_w(x,\mu)=wf^1(x,\mu)+(1-w)f^2(x,\mu)$ for $w\in[0,1]$, and suppose that for all $x$ and $w$, $H_x$ is $(1+\varepsilon)$-optimal for $\min_{\mu\in\mathcal{M}(x)}\left\{f_w(x,\mu)\right\}$, i.e. $\min_{\mu\in H_x}\left\{f_w(x,\mu)\right\}\leq (1+\varepsilon)\min_{\mu\in \mathcal{M}(x)}\left\{f_w(x,\mu)\right\}$. Then the optimal solution of SP1 is $(1+\varepsilon)$-optimal for MP.
\end{proposition}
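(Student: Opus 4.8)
The plan is to interpret $(1+\varepsilon)$-optimality for the bi-objective master problem through the weighted-sum scalarizations $f_w$, which is the notion consistent with the mixed objectives method used elsewhere in the paper. Concretely, I would define the optimal scalarized values
$$V_{\mathrm{MP}}(w) = \min_{x \in X}\ \min_{\mu \in \mathcal{M}(x)} f_w(x,\mu), \qquad V_{\mathrm{SP1}}(w) = \min_{x \in X}\ \min_{\mu \in H_x} f_w(x,\mu),$$
and read the claim ``the optimal solution of SP1 is $(1+\varepsilon)$-optimal for MP'' as the statement $V_{\mathrm{SP1}}(w) \le (1+\varepsilon)\, V_{\mathrm{MP}}(w)$ for every $w \in [0,1]$. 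Since every supported Pareto-optimal point of MP is recovered by some scalarization $f_w$, a bound holding uniformly in $w$ delivers the desired approximation guarantee across the (supported) Pareto front.

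The core argument is then short, and I would carry it out by fixing an arbitrary $w \in [0,1]$ and reasoning design-by-design. For each fixed $x \in X$, the hypothesis states exactly that $\min_{\mu \in H_x} f_w(x,\mu) \le (1+\varepsilon)\min_{\mu \in \mathcal{M}(x)} f_w(x,\mu)$. This is a pointwise inequality between two functions of $x$, so minimizing over $x \in X$ on both sides preserves it by monotonicity of the minimum; on the right-hand side the nonnegative constant $(1+\varepsilon)$ factors out of the minimization, yielding
$$V_{\mathrm{SP1}}(w) = \min_{x\in X} \min_{\mu\in H_x} f_w(x,\mu) \le \min_{x\in X} (1+\varepsilon)\min_{\mu\in\mathcal{M}(x)} f_w(x,\mu) = (1+\varepsilon)\, V_{\mathrm{MP}}(w).$$
As $w$ was arbitrary, this establishes the bound for all $w \in [0,1]$ simultaneously and completes the proof.

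I expect the genuine content to lie not in the algebra but in pinning down the right notion of $(1+\varepsilon)$-optimality for a bi-objective program, since a naive componentwise reading does not interact cleanly with the hypothesis, which is phrased per-scalarization; it is precisely the per-design, per-weight form of the hypothesis that lets the minimization over $x$ commute with the constant factor. A secondary, more technical point to address is attainment: because $\mathcal{M}(x)$ includes stochastic and non-stationary policies it is infinite, so the inner minimum should strictly be read as an infimum, but weighted-sum scalarizations of a weakly communicating CTMDP admit optimal DS policies (cf. \autoref{theorem:communicat}), so the minimum is attained; in any case the inequality chain is unaffected if infima replace minima throughout. Finally, I would note that the multiplicative reading implicitly assumes nonnegative objective values, which holds here since all usage, repair, and failure costs are nonnegative.
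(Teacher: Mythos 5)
Your proof is correct and takes essentially the same approach as the paper: the paper fixes an optimal pair $(x^*,\mu^*)$ of the scalarized MP and exhibits a feasible SP1 point $(x^*,\mu)$ with $\mu\in H_{x^*}$ achieving the $(1+\varepsilon)$ bound, which is logically the same step as your minimization over $x$ of the pointwise-in-$x$ hypothesis, carried out per scalarization weight $w$. Your side remarks on infima versus attained minima and on nonnegativity of the objectives are sound refinements but do not alter the substance.
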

\begin{proof}
    Let $(x^*,\mu^*)$ be an optimal solution for $\min_{x\in X,\mu\in\mathcal{M}(x)}\left\{f_w(x,\mu)\right\}$ (i.e. the mixed-objective version of MP). Then there exists a $\mu\in H_{x^*}$ such that $f_w(x^*,\mu)\leq (1+\varepsilon)f_w(x^*,\mu^*)$, and this $(x^*,\mu)$ is feasible for SP1. Therefore the optimal solution of SP1 is $(1+\varepsilon)$-optimal for MP under a mixed-objective scalarization.
\end{proof}}

The design solutions of SP1 form a non-dominated front that we denote by $X^*$. This set could in general be infinite if $X$ is infinite (e.g. if $X$ contains continuous variables). {However, we focus on the finite case here}. The second stage of APP iterates over all candidate designs $x\in X^*$ and produces a Pareto front of solutions with the decision variables fixed. For any first-stage decision $x$, we call this sub-problem SP2$(x)$.
\begin{align*}
    (\text{SP2}(x)) \quad&\min_{\mu\in\mathcal{M}(x)}f^1(x,\mu)\\
                    &\min_{\mu\in\mathcal{M}(x)}f^2(x,\mu)
\end{align*}
We denote the Pareto front of SP2$(x)$ by $\mathcal{M}^*(x)$. {SP2 both closes the $(1+\varepsilon)$-optimality gaps of the solutions found in SP1, and also explores how well each design solution performs for different weightings of the two objectives, (or, in other words, explores different parts of the Pareto front). However, this is not always guaranteed to yield improvements.
\begin{remark}
    Consider an absolute worst-case scenario where, for each $w$, $H_x$ attains the $(1+\varepsilon)$ bound for the true optimal design $x^*_w$, and SP1 finds a solution $(x_w,\mu_w)$ such that $\mu_w$ is truly optimal for $x_w$ but attains the same value as found for the optimal design, i.e. $f_w(x_w,\mu_w)=(1+\varepsilon)f_w(x^*_w,\mu^*_w)$. As solvers generally only return one of the optimal solutions, in the worst case the sub-optimal design $x_w$ is returned for each value of $w$ instead of $x_w^*$, which cannot be improved by SP2 for the weight $w$. 
\label{sec3remark}\end{remark}
Of course, the scenario described in Remark  \ref{sec3remark} is a worst-case scenario that one would not consider to be at all common, but it does imply that we cannot make any claims about guaranteed improved bounds from SP2. However, it is clear that whenever SP2 can be easily solved, it makes sense to try it. Even in cases where SP2 is not easily solvable, it is still worth applying approximate methods such as ADP or RL to seek improvements on the solutions found in SP1, especially if bi-objective RL is known to work well for fixed designs.}

Our overall population of candidate solutions can then be denoted as $P=\{(x,\mu):x\in X^*, \mu\in\mathcal{M}^*(x)\}$. Our final approximate Pareto front can then be determined as the set of non-dominated solutions in $P$ with respect to the objectives $f^1$ and $f^2$. {Despite the somewhat weak performance guarantees of this methodology, especially with respect to SP2, we believe that this framework is an important first step to obtaining good solutions to an otherwise hard and novel class of integrated problems, and can comfortably serve as a heuristic in its own right, or a benchmark against which future methods can be compared.} With our general methodology outlined, we now offer an interpretation of this algorithm.

\subsection{Interpretation and Applicability}
\label{paper1:methodology:InterpApplic}
In the case of solving BO-MDP Design problems, APP first solves a bi-objective design problem that is restricted to a set of heuristic policies $H_x$ to control the sequential problem, and then solving the true dynamic problem using the designs found in the first stage. We acknowledge that the problems SP1 and SP2 may be challenging to solve in their own right, and could require approximate methods such as metaheuristics or reinforcement learning, respectively. By design, the algorithm does of course produce a non-dominated set of solutions (with respect to itself) that interpolate between the two single-objective heuristic solutions, and a real-world decision maker can ultimately decide whether or not the solutions provide a good trade-off between the objectives for real-world deployment, regardless of whether or not they are truly Pareto-optimal.

{We now turn our attention back to the IDDMP. A reasonable choice for a set of heuristic policies is those policies that always repair some subset of the installed components as soon as they become damaged, and entirely neglect the other components, along the same lines as the critical component policies of \cite{flynn2004heuristic}. However, in our problem, finding a design $x$ and a critical component policy $\mu$ that always repairs a subset of the components represented by $x$ will yield an identical outcome to choosing a design $x'$ that represents the components actively maintained by $\mu$, and choosing to actively maintain all components. As such, for any design $x$, we simplify the problem by allowing only one heuristic maintenance policy (so $|H_x|=1$), which is the one that always maintains all active components.} We can then interpret SP1 as finding different combinations of components that strike different balances between operational costs and system reliability, under the assumption that all components are actively maintained. Then, for each design $x$ that comes out of this, SP2($x$) will find dynamic maintenance policies for design $x$. It may sometimes happen that we find two designs $x^{(1)}$ and $x^{(2)}$ such that $x^{(1)}$ performs better than $x^{(2)}$ with respect to the first objective (cost) but worse than $x^{(2)}$ with respect to the second objective (reliability) under the fully active maintenance policy, but the design $x^{(2)}$ yields a `lazier' maintenance policy that performs better than the fully active policy under $x^{(1)}$ with respect to both objectives. That is to say a lazy maintenance policy on an intrinsically more reliable design may be both cheaper and more reliable in the long-run than the fully active maintenance on a cheaper, less reliable system. This shows the benefit of heuristically solving the problem in one large bi-objective sweep, as opposed to heuristically solving instances of the mixed-objective $p$-IDDMP problem directly. \autoref{paper1:methodology:dop} now follows with an in-depth exploration of SP1 for BO-IDDMP, and demonstrates how this problem can be linearized and solved using standard MILP solvers.
 
}
\subsection{Design-Only Problem}
\label{paper1:methodology:dop}
When applying APP to BO-IDDMP, we noted previously that a natural choice of the heuristic mapping $H$ is the one that maps any design $x$ to the ``fully active'' maintenance policy which places any installed components into repair as soon as they become damaged. This mapping is desirable in part because it allows SP1 to find both single-objective optimal solutions of BO-IDDMP: the empty solution for minimizing cost, and the most reliable design being actively maintained for minimizing failure rate. We call this sub-problem the \emph{Bi-Objective Design-Only Problem} (BO-DOP). Not only is BO-DOP useful within the APP framework, but it's also useful in its own right as a point of comparison against BO-IDDMP, as we can compare the solution sets between the two problems. BO-DOP can be considered the static or non-dynamic version of the problem, as typically seen in the repairable RAP literature. The comparison between BO-DOP and BO-IDDMP will be carried out in \autoref{paper1:study}.

If a component of type $i$ is always repaired as soon as it becomes damaged, then it is healthy with probability $p_i = \tau_i/(\tau_i + \alpha_i)$ and repairing with probability $q_i = 1-p_i = \alpha_i/(\tau_i + \alpha_i)$.
Therefore, in BO-DOP, any copy $j$ of component type $i$ under design decision $x_{ij}$ is healthy with probability $p_ix_{ij}$, non-healthy (i.e. repairing or non-existent) with probability $1-p_ix_{ij}$, and repairing with probability $q_ix_{ij}$. The two objectives are therefore as follows:
\begin{align}
    g^o_{\text{DOP}}(x) &=\sum_{i=1}^N\sum_{j=1}^{M_i}  r_iq_ix_{ij} +\sum_{i=1}^N\sum_{j=1}^{M_i}c_ip_ix_{ij}\left(\prod_{k<i}\prod_{l=1}^{M_k}(1 - p_kx_{kl})\right)\left(\prod_{l<j}(1 - p_ix_{il})\right) \label{BO-DOP-obj1}\\
    g^f_{\text{DOP}}(x) & = \prod_{i=1}^N\prod_{j=1}^{M_i}(1-p_ix_{ij}). \label{BO-DOP-obj2}
\end{align}

Objective (\ref{BO-DOP-obj1}) appears quite complex, so we provide an explanation. The first double-summation states that for every component of type $i$ and copy $j$, we incur repair cost-rate $r_i$ with probability $q_ix_{ij}$. The second summation arises from the fact that for every component type $i$ and copy $j$, we incur from it usage cost $c_i$ with the probability that component $i$ copy $j$ is healthy ($p_ix_{ij})$ and all cheaper components are non-healthy (expressed by the double product) and the first $j-1$ copies of component type $i$ are all non-healthy (expressed by the single product). Objective (\ref{BO-DOP-obj2}) is simply the probability that all components are non-healthy. BO-DOP can be expressed as follows:
\begin{align*}
    \text{(BO-DOP)}\quad \min_x\,      & g^o_{\text{DOP}}(x)\\
                    \min_x\,      & g^f_{\text{DOP}}(x)\\
                    \text{s.t. } & (\ref{p-IDDMP-knap}), (\ref{p-IDDMP-sym}), (\ref{p-IDDMP-bin}),
\end{align*}   
where we recall Constraints (\ref{p-IDDMP-knap}), (\ref{p-IDDMP-sym}), (\ref{p-IDDMP-bin}) are the knapsack, symmetry breaking and binary constraints, respectively. As currently formulated, BO-DOP is a nonlinear integer program, and would be very difficult to solve exactly. In order to solve the problem, we consider a single-objective, $\varepsilon-$constrained and slightly augmented problem which allows for linearization. We choose the system failure objective to be treated as an $\varepsilon$-constraint. The $\varepsilon-$constraint method is a standard scalarization method for bi-objective problems, and is chosen over the weighted-sum approach to recover a single-objective problem because the weighted-sum approach is known to miss non-supported efficient solutions in the case of multi-objective mixed integer programs \citep{ehrgott2005multicriteria}. Additionally, our preliminary experimentation with the weighted-sum approach to BO-DOP led to numerical instability when compared with the $\varepsilon-$constraint method. Therefore, we focus on the latter method. We augment the problem by adding a small penalty for system failure into the operational cost objective function, using an additional parameter $\delta \geq 0$. As such, we call the resulting problem $\varepsilon-\delta-$DOP.
\begin{align}
    &(\varepsilon-\delta-\text{DOP}) &\min_x & \sum_{i=1}^N\sum_{j=1}^{M_i}  r_iq_ix_{ij} \nonumber\\ &&&+\sum_{i=1}^N\sum_{j=1}^{M_i}c_ip_ix_{ij}\left(\prod_{k<i}\prod_{l=1}^{M_k}(1 - p_kx_{kl})\right)\left(\prod_{l<j}(1 - p_ix_{il})\right) \nonumber\\
    &&&+(1+\delta)c_N\prod_{i=1}^N\prod_{j=1}^{M_i}(1-p_ix_{ij})\label{e-DOP-obj}\\
    &&\text{s.t. }& (\ref{p-IDDMP-knap}), (\ref{p-IDDMP-sym}), (\ref{p-IDDMP-bin}), \label{e-DOP-other}\\
    &&&\sum_{i=1}^N\sum_{j=1}^{M_i}\log(q_i)x_{ij} \leq \varepsilon. \label{e-DOP-epsilon}
\end{align}

Objective (\ref{e-DOP-obj}) is $g^o_{\text{DOP}}$ with an additional penalty term $(1+\delta)c_N$ for system failure. This term ensures that the objective function considers system failure to be at least as bad as using the most expensive component type{, which is required for the linearization method employed in the next step}. Constraints (\ref{e-DOP-other}) are the the knapsack, symmetry breaking and binary constraints used in BO-DOP. Constraint (\ref{e-DOP-epsilon}) is the (log) $\varepsilon-$constraint, ensuring that the log failure rate (LFR) falls below some target value $\varepsilon$.  

{As it stands, $\varepsilon-\delta-$DOP is a heavily nonlinear integer program, and is in fact a supermodular optimization problem (as shown in the supplementary material, Section B.1), making it very hard to solve directly.} Fortunately, the objective function in $\varepsilon-\delta-$DOP is very similar to that of the well-studied Unreliable p-Median Problem. In relation to this problem, \cite{OHANLEY201363} developed an exact linearization method known as \emph{probability chain}, and we shall apply this method to our problem in order to obtain a formulation that can be solved exactly using standard MILP solvers. The resulting problem, $\varepsilon-\delta-$DOP-PC, is given as follows:
\begin{align}
     &(\varepsilon-\delta-\text{DOP-PC}) &\min_{x,y,z} &\sum_{i=1}^N\sum_{j=1}^{M_i} (r_iq_ix_{ij} + c_iy_{ij}) + (1 + \delta) c_N z_{N,M_N}, \nonumber\\
    &&\text{s.t. }& (\ref{e-DOP-other}) - (\ref{e-DOP-epsilon}) \nonumber \\
    &&& y_{11} + z_{11} = 1, \label{e-d-dop-pc-1}\\
    &&& y_{i(j+1)} + z_{i(j+1)} = z_{ij}, \text{ for all } i = 1,...,N, j = 1,...,M_i-1,\label{e-d-dop-pc-2} \\
    &&&y_{i1} + z_{i1} = z_{(i-1)M_{i-1}}, \text{ for all } i = 2,...,N,\label{e-d-dop-pc-3} \\
    &&& y_{ij} \leq p_ix_{ij}, \text{ for all }i=1,...,N, j = 1,...,M_i\label{e-d-dop-pc-4}\\
    &&& y_{i(j+1)} \leq p_{i}z_{ij}, \text{ for all }i = 1,...,N, j = 1,...,M_i-1\label{e-d-dop-pc-5}\\
    &&&y_{(i+1)1} \leq p_{i+1}z_{iM_i}, \text{ for all }i=1,...,N-1,\label{e-d-dop-pc-6}\\
    &&&y_{ij}, z_{ij} \geq 0.
\end{align}
{For clarity of exposition, we say component $(i,j)$ \emph{precedes} $(i',j')$ if $i<i'$ or $i=i'$ and $j<j'$. The variable $y_{ij}$ represents the probability that component $(i,j)$ is healthy and must be used (i.e. all preceding components are damaged), and the variable $z_{ij}$ is the probability that component $(i,j)$ would be used if it were not damaged, i.e. the probability that component $(i,j)$ and all preceding components are damaged. To achieve this, constraints \eqref{e-d-dop-pc-1}
 - \eqref{e-d-dop-pc-3} ensure that $y_{ij}$ and $z_{ij}$ always sum to the $z$ value of the preceding component, constraint \eqref{e-d-dop-pc-4} ensures $y_{ij}$ is bounded above by $p_i$ if component $(i,j)$ is installed, or 0 otherwise. Constraints \eqref{e-d-dop-pc-5} and \eqref{e-d-dop-pc-6} then ensure that $y_{ij}$ is bounded above by $p_iz_{i,j-1}$ if $j>1$, or $p_iz_{i-1,M_{i-1}}$ if $j=1$. The penalty term in front of $z_{N,M_N}$, which outweighs the costs for each $y_{ij}$, ensures that these bounds are attained at optimality if the component is installed. As such, these constraints together ensure that the $y$ and $z$ variables take on the desired values at optimality:
\begin{align}
    y_{ij} &= \left(\prod_{k<i}\prod_{l=1}^{M_k}(1 - p_kx_{kl})\right)\left(\prod_{l<j}(1 - p_ix_{il})\right)p_ix_{ij}, \\
    z_{ij} &= \left(\prod_{k<i}\prod_{l=1}^{M_k}(1 - p_kx_{kl})\right)\left(\prod_{l\leq j}(1 - p_ix_{il})\right).
\end{align}
The rigorous proof of equivalence between $\varepsilon-\delta-$DOP-PC and $\varepsilon-\delta-$DOP follows immediately from Proposition 1 of \cite{OHANLEY201363}, where the only difference is that our formulation has extra indices for repeated components. As such, the nonlinear integer optimization problem $\varepsilon-\delta-$DOP with $\sum_{i=1}^NM_i$ integer variables is transformed into a mixed-integer linear program with the same number of integer variables, and $2\sum_{i=1}^N M_i$ continuous variables which take on all of the nonlinearities from the original problem. This results in a much easier problem to solve in general.}

Problem $\varepsilon-\delta-$DOP-PC is a MILP that can be easily solved by any MILP solver. We can then re-solve this MILP for different values of $\varepsilon$ to obtain a set of efficient solutions to BO-DOP. To assist the $\varepsilon-$constrained method, it is also worthwhile to consider the two problems that only consider one objective, whilst ignoring the other. These problems give the extreme points of our Pareto front. In the case of minimizing operational costs, the (trivial) optimal solution is to install no components and incur no costs, hence leaving the system permanently failed. In the case of minimizing failure probability, we call the MILP that minimizes failure probability while neglecting operational costs the Failure Design-Only Problem (F-DOP), and formulate it as follows:
\begin{align}
    \text{(F-DOP)} \quad\min_x &\sum_{i=1}^N \log(q_i)x_i\\
    \text{s.t. }& Ax\leq b\\
    & x_i\in\NN_0 \text{ for }i=1,...,N.
\end{align}
This allows us to obtain the solution to the bi-objective problem that gives all preference towards the objective of system reliability, and also provides a bound for our $\varepsilon$ values. The models $\varepsilon-\delta-$DOP-PC and F-DOP can both be used to provide solutions to BO-DOP.

\subsection{Algorithm Outline}
\label{paper1:methodology:alg}
\begin{figure}[tbp]
    \centering
    \begin{subfigure}{.5\textwidth}
        \centering
        \includegraphics[width=\textwidth]{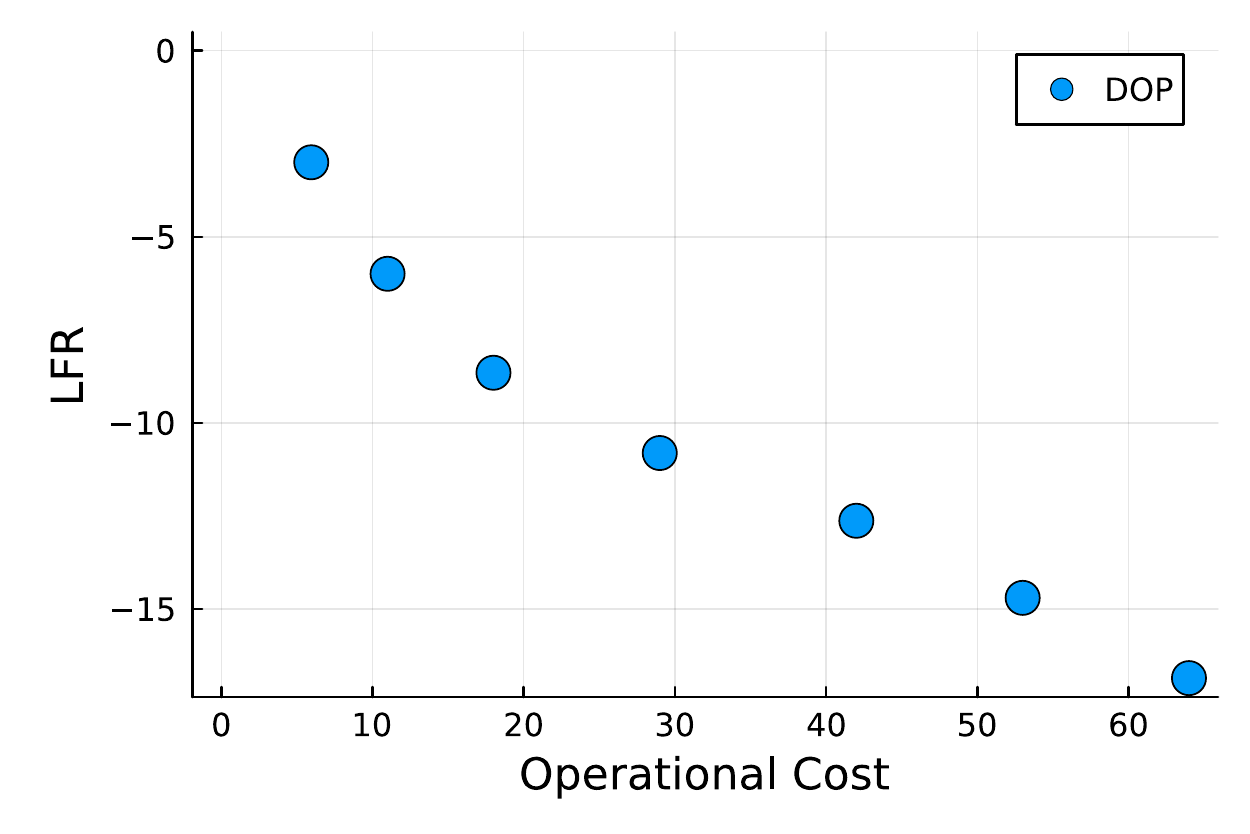}
        \subcaption{}
    \end{subfigure}%
    \begin{subfigure}{.5\textwidth}
        \centering
        \includegraphics[width=\textwidth]{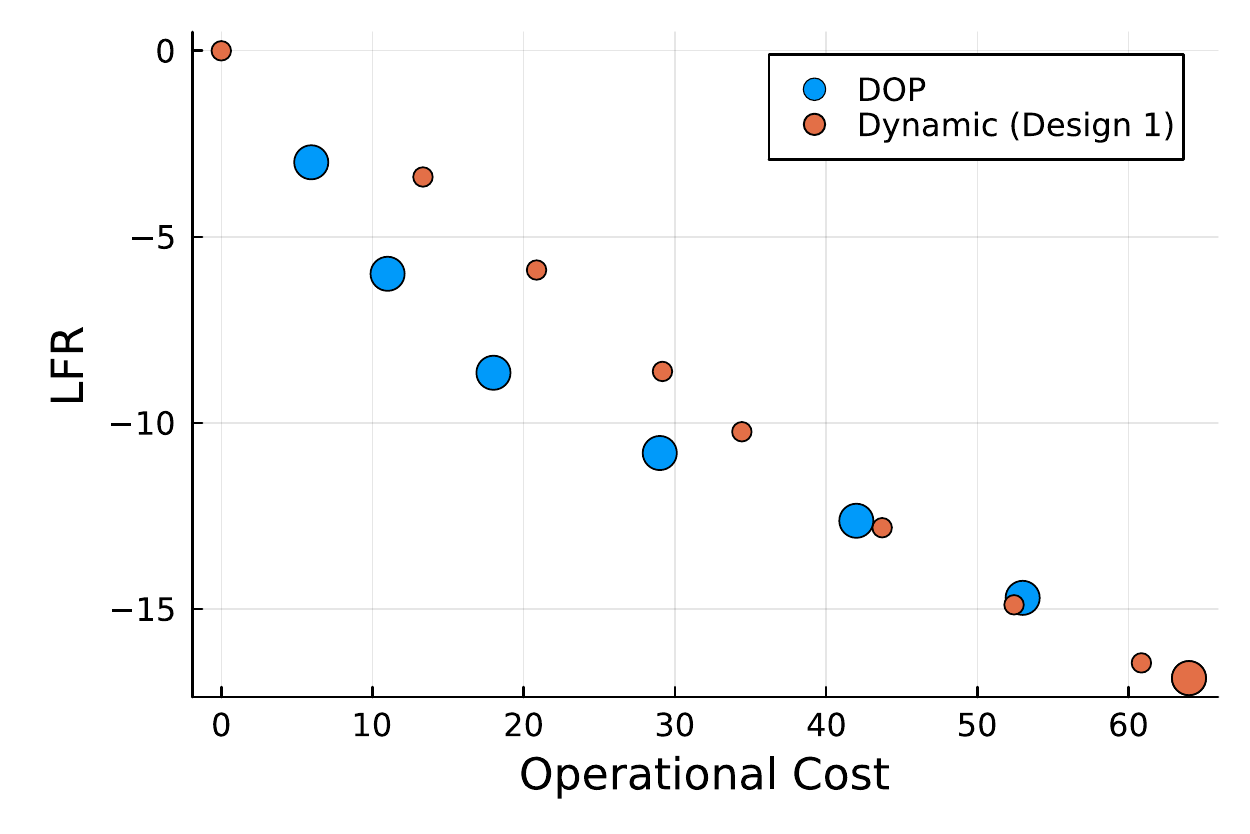}
        \subcaption{}
    \end{subfigure}
    \begin{subfigure}{.5\textwidth}
        \centering
        \includegraphics[width=\textwidth]{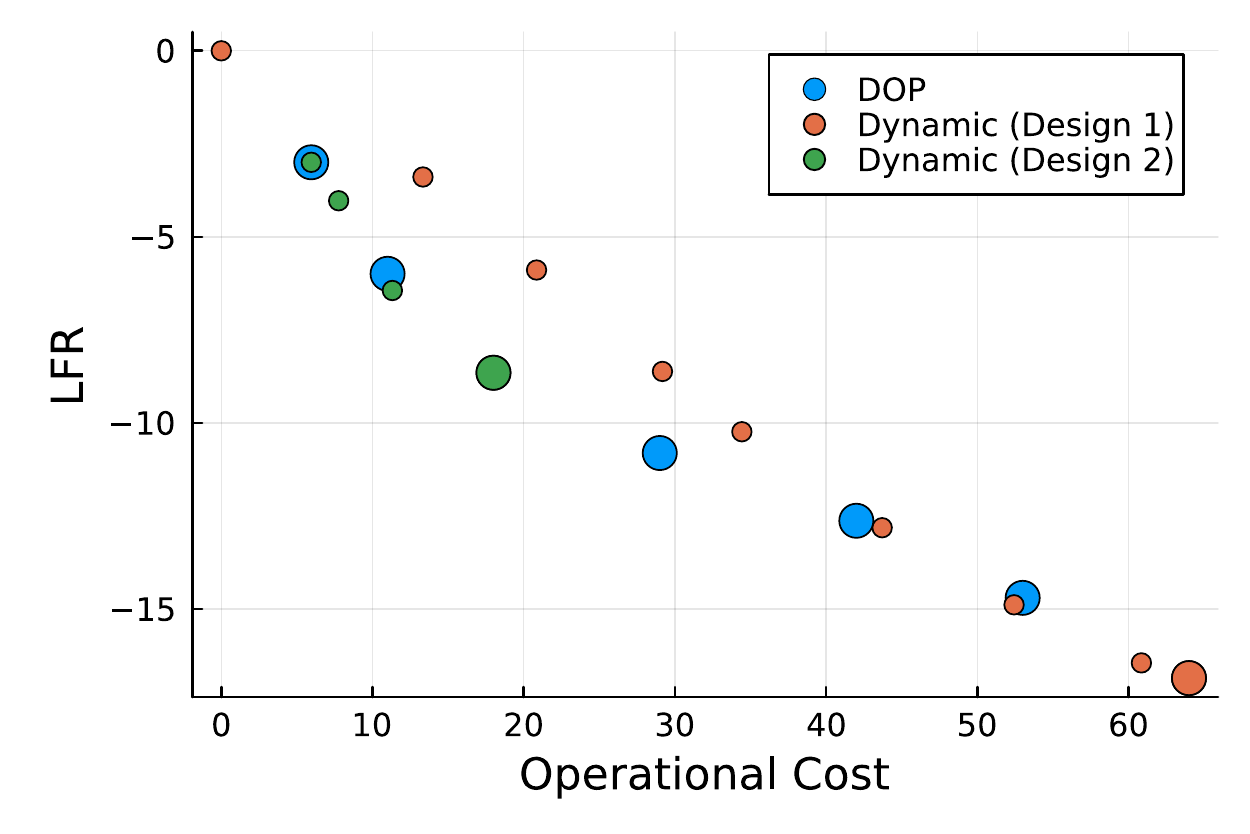}
        \subcaption{}
    \end{subfigure}%
    \begin{subfigure}{.5\textwidth}
        \centering
        \includegraphics[width=\textwidth]{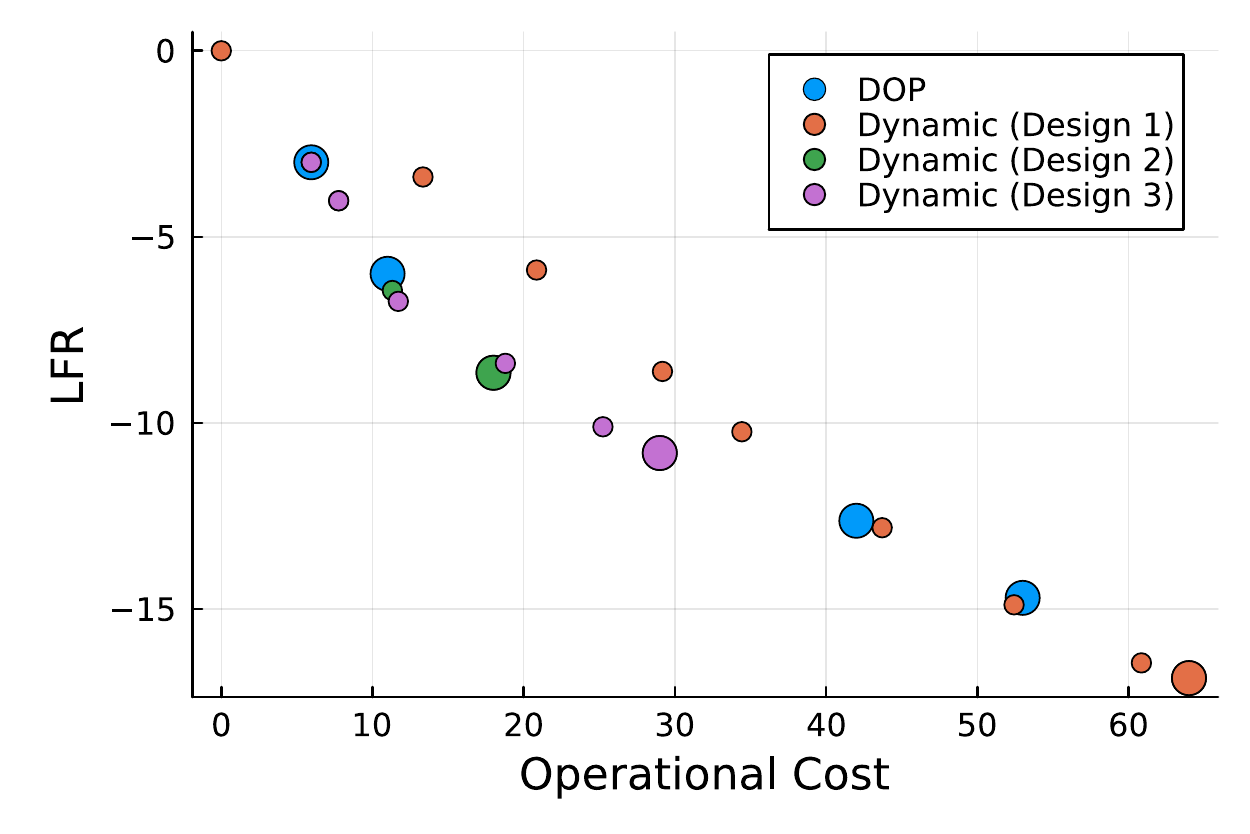}
        \subcaption{}
    \end{subfigure}
    \begin{subfigure}{.5\textwidth}
        \centering
        \includegraphics[width=\textwidth]{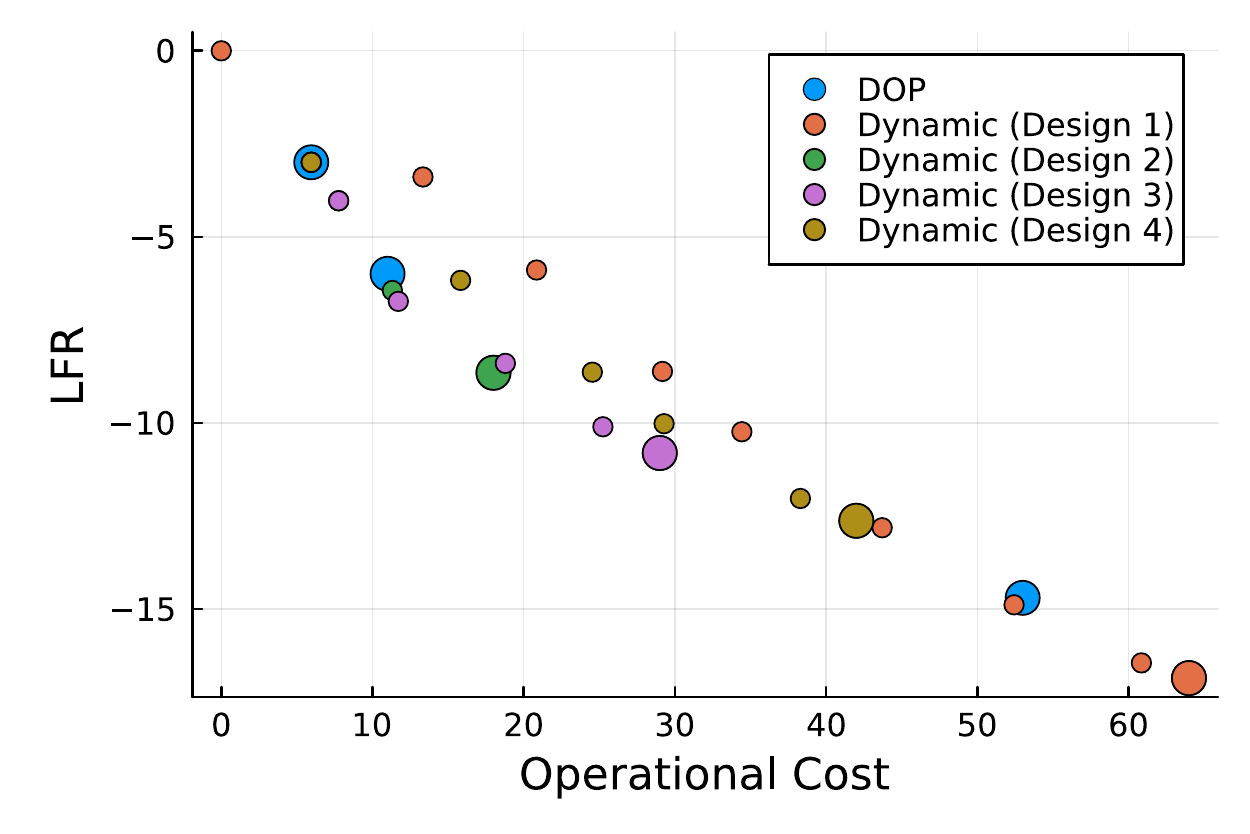}
        \subcaption{}
    \end{subfigure}%
    \begin{subfigure}{.5\textwidth}
        \centering
        \includegraphics[width=\textwidth]{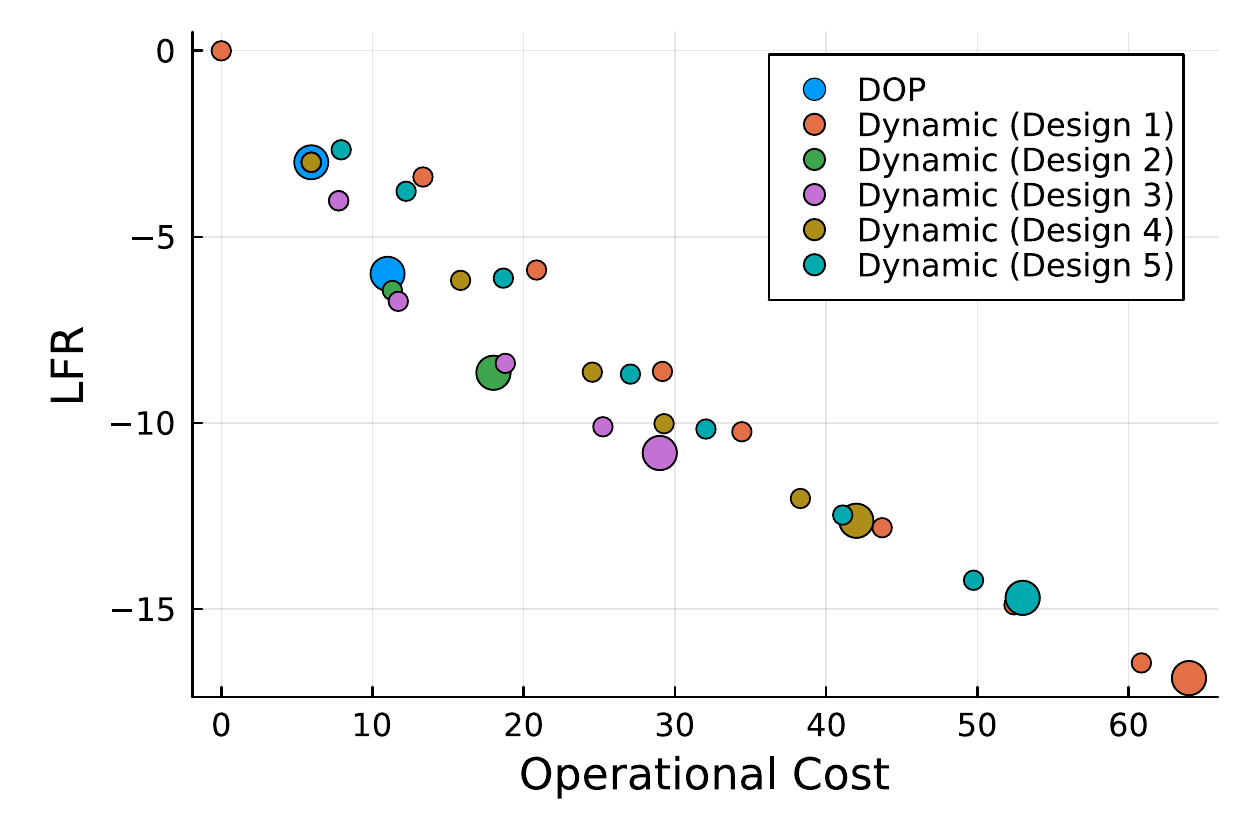}
        \subcaption{}
    \end{subfigure}
    \caption{Illustration of APP building up the population of candidate solutions.}
    \label{fig:APP}
\end{figure}

\begin{figure}[tbp]
    \centering
    \includegraphics[width = 0.75\textwidth]{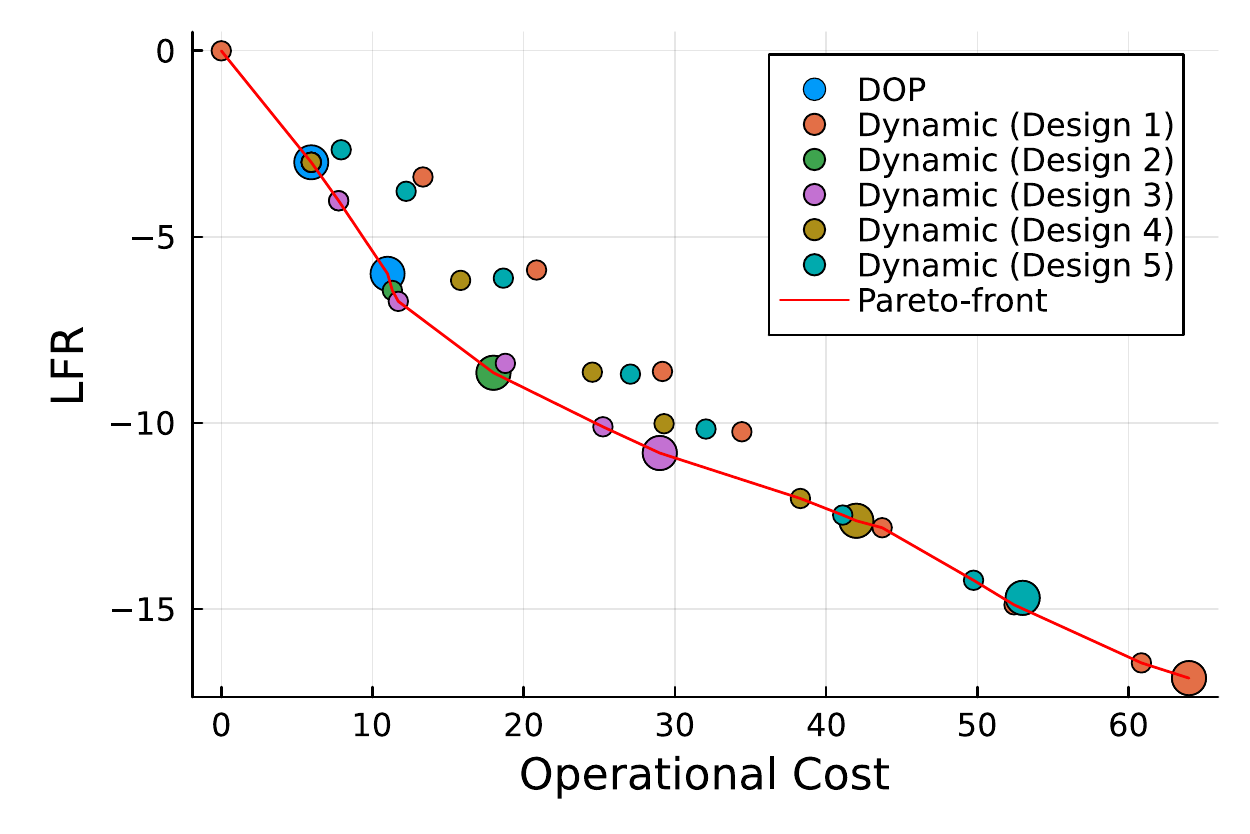}
    \caption{Pareto front of candidate population.}
    \label{fig:APP-final}
\end{figure}

We now give a brief overview and illustration of the algorithmic implementation of APP when applied to BO-IDDMP. See the supplementary material (Section B.2) for the full algorithms and detailed explanations. We start by solving BO-DOP, which is SP1 for BO-IDDMP. We do this by solving F-DOP using a MILP solver to obtain the most reliable solution, and otherwise use the $\varepsilon-$constraint method to obtain the other solutions. For each value of $\varepsilon$ we wish to test, a solution is obtained via a MILP solver on $\varepsilon-\delta-$DOP-PC. Our implementation of the $\varepsilon-$constraint method uses two parameters: the starting value $\varepsilon_{\min}$, and the increment value $\Delta\varepsilon$ (note that both of these are negative, as $\varepsilon$ is a log-probability). After solving the problem using $\varepsilon_{\min}$, the solution obtained will have a value for $\ln g^f$, the log failure rate of the solution. We update the target value $\varepsilon$ for the next solution by setting $\varepsilon = \ln g^f + \Delta\varepsilon$. This is done iteratively for each new solution, until $\varepsilon$ would exceed the minimum value of LFR found by F-DOP.

Next, we solve SP2$(x)$ for each design found by BO-DOP. SP2$(x)$ is simply the dynamic maintenance problem BO-DMP as outlined in Section \ref{paper1:formulation:dynamic}. This is solved using the dichotomic method of \cite{aneja1979dichotomic} to choose the weight of the two objective, which are then normalized to give a weight of $1$ the first objective, and a value of $p$ for the $p$-DMP formulation.

The solutions found by SP2 all form a population of candidate solutions. The final step is to identify all non-dominated solutions in our population, and we return this as our result. To provide further intuition for our methodology, \autoref{fig:APP} illustrates how APP builds up the population of candidate solutions. \autoref{fig:APP}(a) shows the set of static solutions that come from solving BO-DOP. Each of the subsequent subfigures then selects a design in a unique color, and adds in the Pareto front of dynamic policies using that design in the same color. \autoref{fig:APP-final} shows the final population, with the Pareto front of this population shown by the red line. {Albeit subtle (in this example), this front shows how a lazy dynamic maintenance policy from the orange (right-most) design dominates the teal (second-right-most) design under the fully active policy. This demonstrates the benefit of considering dynamic policies on more reliable designs, as opposed to just the design-only problem with fully active policies. Additionally, we see that two policies from the teal design belong to the non-dominated front, despite the base design being dominated. This shows that we should not discard designs from SP1 just because that solution is dominated by a solution from SP2, because that design may still be useful elsewhere in the front.}
With our heuristic fully described, \autoref{paper1:study} now follows with a computational study comparing the exact and heuristic approaches, and investigates the impact of problem parameters.

\section{Numerical Examples and Computational Results}
\label{paper1:study}

In this section we present the results of computational experiments in order to evaluate the performance of the APP heuristic, and also explore the effects of varying the parameters in our model. \autoref{paper1:study:instances} described how we adapt a well known test instance for series-parallel models into a suite of test instances for our parallel model. In \autoref{paper1:study:big} we show that in general our heuristic finds better populated Pareto fronts than the exact method without sacrificing the quality of the solutions found, and does so in far less time than the exact method. We do this by adapting a popular problem set from the RAP literature. In \autoref{paper1:study:usage} and \autoref{paper1:study:repairs} we investigate the effects of varying usage costs and repair costs on the solutions to our problem in order to demonstrate the importance and impact of these parameters in our model. 
We demonstrate that such variations have an impact on dynamic policies. The code used can be found on \href{https://github.com/fairleyl/MDP-Design-Parallel}{GitHub}.

{\subsection{Test Instances}
\label{paper1:study:instances}
We use 14 different sets of components based on test instances found in the RAP literature, following the installation cost, weight, and reliability values of the 14 subsystems introduced by \cite{fyffe1968problemset}. To be clear, whereas the problem in \cite{fyffe1968problemset} is a single series-parallel problem with 14 subsystems, we treat each subsystem as a separate one-subsystem parallel problem to obtain a set of parallel-only problems. The original problem set defines, for each component $i$, its reliability $p_i$, its installation cost $C_i$, and its weight $w_i$. For the parameters unique to our model, for each component $i$ we set $\tau_i = 1$ and obtain $\alpha_i$ by solving $\tau_i/(\tau_i + \alpha_i) = p_i$. We set usage cost $c_i = 1$ and repair cost $r_i = 100$ for the initial problem set. The impacts of varying these parameters will be explored in subsequent subsections. For each set we use different budget values $B$ applied to both budget and weight, obtaining knapsack constraints $C^Tx\leq B$ and $w^Tx\leq B$. Clearly, as the budget $B$ increases, the number of possible copies of each component increases, and therefore the size of the state space in the IP increases. For each component set, we choose values of $B$ corresponding to the third up to the eighth multiples of the minimum weight found in the component set. For example, if the minimum weight in a set is 5, we test budgets of 15, 20, 25 and so on up to and including 40, reflecting problem instances that allow from 3 to 8 copies of the lightest component, respectively. We denote problem instances by $a$-$B$, where $a$ is the component set (i.e. subsystem from the original problem), and $B$ is the budget. For example, instance 5-15 includes components from the fifth subsystem and has a budget of 15. }
{\subsection{Runtime and Scalability}
\label{paper1:study:big}

In this section we solve the BO-IDDMP using an exact method, the heuristic method described in Section \ref{paper1:methodology}, and the non-dynamic BO-DOP solutions. For each problem instance, we compare the solutions with respect to runtimes and the numbers of solutions found. All MILPs and LPs are solved using Gurobi \citep{gurobi} and our own fork of MultiObjectiveAlgorithms.jl \citep{dowson2025multiObjectiveAlgorithmsJl}, and all code is written in Julia 1.9.3. The code is run on a computer with an Intel Xeon Gold 6348 with four cores clocked at 2.6 GHz and 16 GB of RAM. 

For each problem instance as defined in \autoref{paper1:study:instances}, we solve the problem in three ways: (i) using an exact method by applying the dichotomic method to the BO-IDDMP formulation and solving the resulting MILPs using Gurobi; (ii) using the APP heuristic, with parameters $\varepsilon_{\min} = 0,\, \Delta\varepsilon = -0.1$, and $\delta = 0.1$; (iii) using the probability chains method to solve BO-DOP, using the same parameters as used in APP. We set a soft time limit of 3600 seconds to both build the model and solve it for the prescribed range of penalty values, but with flexibility to allow Gurobi and the MultiObjectiveAlgorithms package to terminate ``gracefully'' when this limit is exceeded. 

We defer the full two-page table of results to the supplementary material (Section C.1), and discuss the key findings here. In general, we found that our implementation of APP for this problem performed very well in terms of runtimes and completeness of the Pareto front. For some of the smaller instances (i.e. those with smaller budgets and therefore easily solved MILPs) we found that the exact method was faster than the APP heuristic, but for the majority of instances APP was much faster. Exact solution times ranged from 0.31 to 3410.58 seconds for times within the time limit, with four instances exceeding this due to the soft time limit: instance $(6,28)$ with 4624.9 seconds, $(6,32)$ with 7196.05 seconds (failing to find even one solution), $(9,56)$ with 7196.93 seconds (again failing to find a solution), and $(14,48)$ with 5974.65 seconds. An inspection of the Gurobi logs revealed that these instances encountered difficult linear relaxations. On the other hand, APP solution times ranged between 0.32 and 6.30 seconds. Generally, for any fixed set of components, the solution time grows drastically for the exact solver as the budget increases, and grows far more slowly for the APP method. This is because designs identified by the APP method generally only have a small number of component types (usually two), meaning that the MDP associated with any given design to be solved in SP2 is quite small. This contrasts with the exact method, which needs to enumerate all possible state-action pairs across all designs.  

Both solution methods find a large number of solutions for problems with larger budgets, despite a visual inspection of the Pareto front suggesting less diversity in the solution space. This is because two distinct Pareto optimal policies may only differ in the actions taken in states that occur with very low probability, meaning that the overall objective value is very similar yet not exactly the same. Instance (14,42) provides an example of this: the exact solution method finds 34 solutions and APP finds 29 solutions; however, a visual inspection of \autoref{fig:instance14-42} would only suggest a total of about 17 distinct solutions across both methods. Note that all orange-colored APP points overlap a blue-colored exact point, and that the one exact solution not found by APP is still very close to an APP solution. To investigate the ability of APP to find the same or very similar solutions to the exact method, we plotted the Pareto fronts of all instances where at least one of the exact Pareto-optimal solutions did not have an APP solution within 2\% of it in terms of both log failure rate (LFR) and operational costs. The majority of these plots proved uninteresting and can be found in the supplementary material (Section C.2), as all but two instances found that all exact solutions had a nearby APP solution upon visual inspection, or showed that the missed exact solution was itself dominated due to numerical error (this started to occur when failure probabilities dropped below $10^{-9}$). The two exceptions were instances (5,12) and (5,24). Instance (5,12) is the only instance with very obvious solutions missed by APP but found by the exact method, and also showcases that one of the solutions found by APP is dominated by an exact solution, as shown in \autoref{fig:instance5-12}. In this case, one of the designs used by the exact method (including the one that dominates an APP solution) uses 1 copy of each component, whereas this design is not found at all by APP. As such, this is a rare case where there is a design that is not Pareto-optimal for BO-DOP, but which lends itself well to dynamic maintenance. As rare as this instance is, it showcases a flaw of the APP method, in that it does not explicitly search for solutions that work well only under exact dynamic policies. Of course, such a thing is hard to quantify, but could indicate an interesting future research direction. A similar but less dramatic example of domination is found in instance (5,24), where the exact method finds a design with 3 copies of the second and third component type, and this design is not found by APP.

The experiment yielded 18 out of 84 instances where at least one DOP solution was dominated by an IDDMP solution. This means that in these instances, there exist design-policy pairs with dynamic policies that outperform an ``optimal'' design with a fully active maintenance policy across both objectives. This suggests that, in some cases, it is both cheaper and more reliable to use a dynamic maintenance policy on a design with more inherent reliability than it is to design a system to strike a specific balance between the two objectives and maintain all components actively. This generally occurs for instances with larger budgets, and therefore more capacity to install a large number of redundant components, giving more flexibility to the dynamic maintenance policies. \autoref{fig:instance2-64} provides the Pareto front of Instance (2,64), in which 4 of the 9 design-only solutions are dominated by at least one IDDMP solution. Details of all problem instances can be found in the supplementary material, Section C.1.

\begin{figure}[htbp!]
    \centering
    \begin{minipage}{.5\textwidth}
    \includegraphics[width=\linewidth]{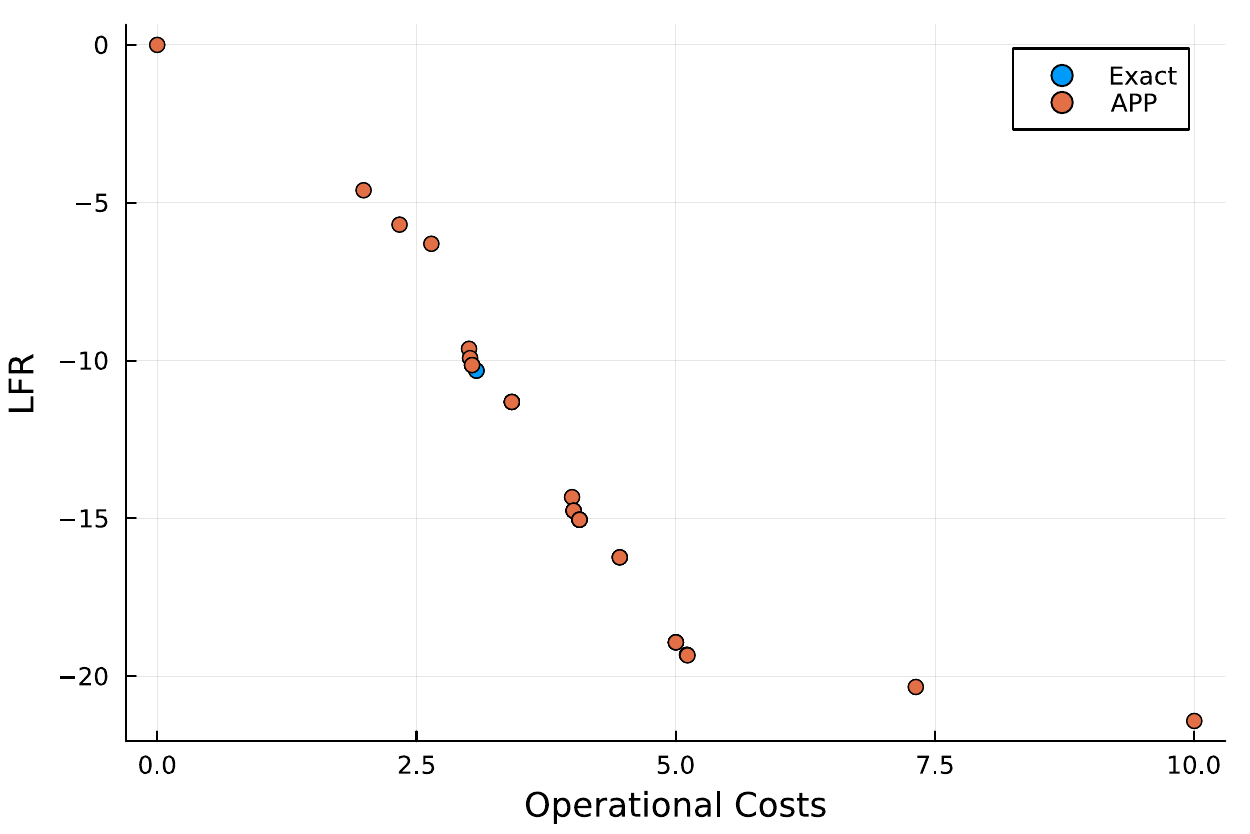}
    \captionof{figure}{Pareto Front for Instance (14,42)}
    \label{fig:instance14-42}   
    \end{minipage}%
    \begin{minipage}{.5\textwidth}
    \includegraphics[width=\linewidth]{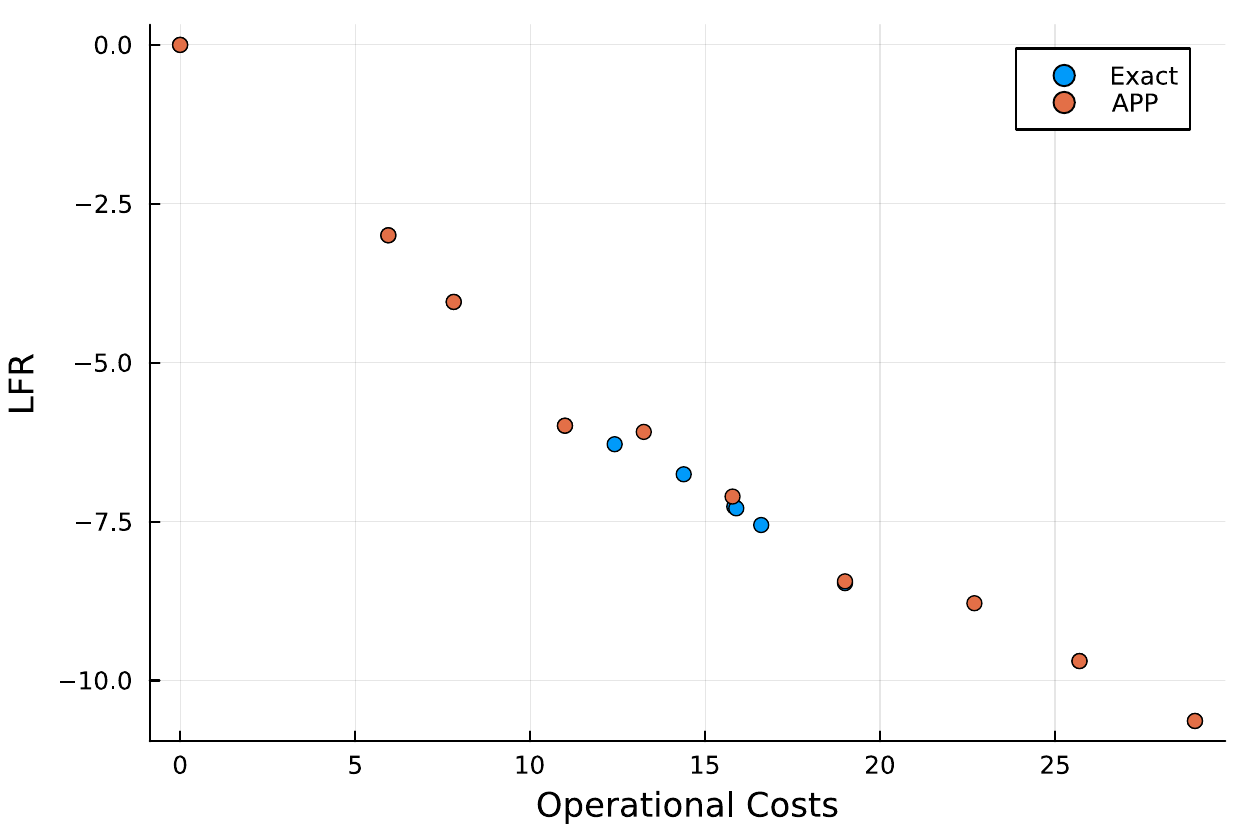}
    \captionof{figure}{Pareto Front for Instance (5,12)}
    \label{fig:instance5-12}
    \end{minipage}
\end{figure}


\begin{figure}[t]
    \centering
    \includegraphics[width=0.7\linewidth]{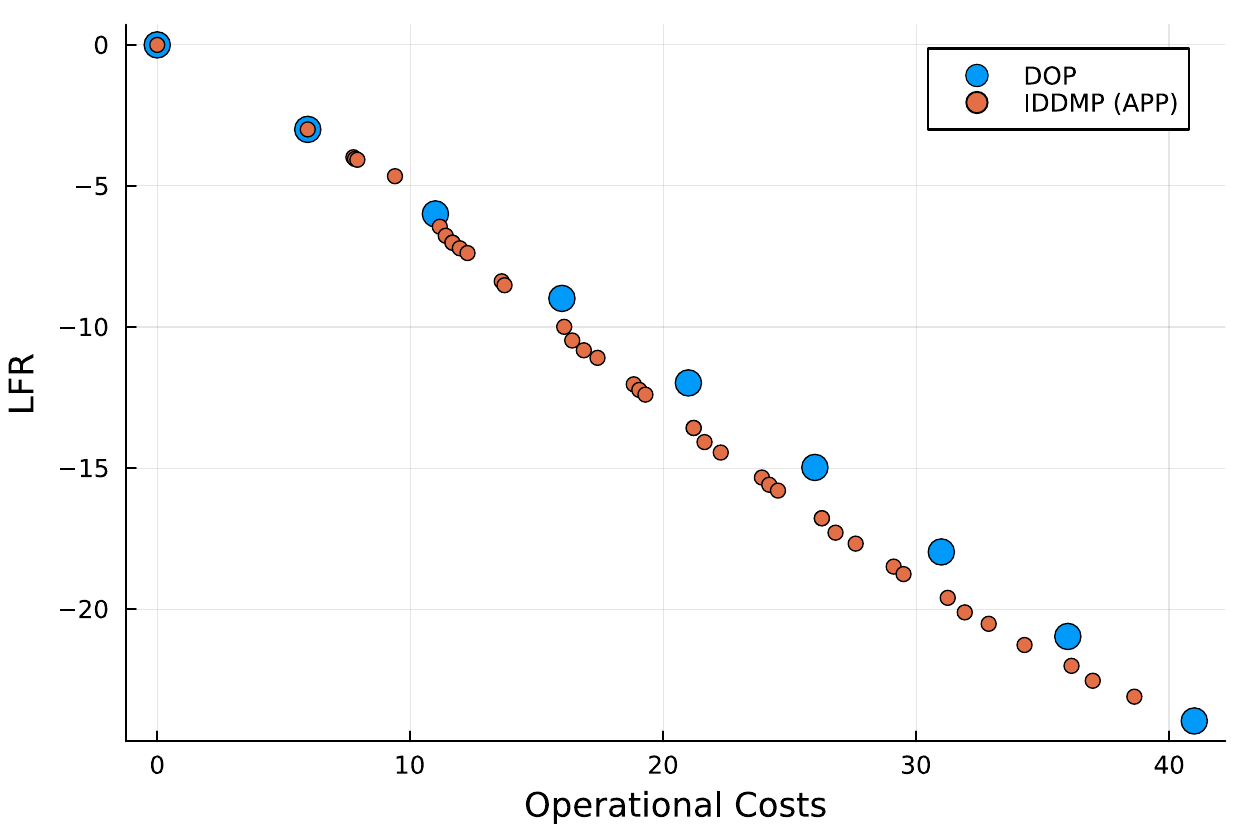}
    \caption{Pareto Front for Instance (2,64)}
    \label{fig:instance2-64}
\end{figure}
}

\subsection{Effect of heterogeneous usage costs}
\label{paper1:study:usage}

\begin{table}[tbp]
    \parbox{.45\linewidth}{
    \centering
    \begin{tabular}{ccccccc}
    \hline
    $i$ & $p$ & $\tau$ & $c$ & $r$ & $C$ & $w$ \\
    1 & 0.99 & 1 & 1 & 100 & 3 & 5\\
    2 & 0.98 & 1 & 1 & 100 & 3 & 4\\
    3 & 0.97 & 1 & 1 & 100 & 2 & 5\\
    4 & 0.96 & 1 & 1 & 100 & 2 & 4\\
    Budget & - & - & - & - & 20 & 20\\
    \hline
    \end{tabular}
    \caption{Base problem parameters.}
    \label{tab:probParams}
    \vspace{1em}

    \begin{tabular}{cccccccc}
    \hline
    $c_1$ & $c_2$ & $x_1$ & $x_2$ & $x_3$ & $x_4$ & $g^o$ & $\ln g^f$\\ 
         \multirow{4}{*}{1} & \multirow{4}{*}{1} & 1 & 0 & 0 & 0 & 1.99 & -4.61\\
         && 2 & 0 & 0 & 0 & 3.00 & -9.21\\
         && 3 & 0 & 0 & 0 & 4.00 & -13.82\\
         && 4 & 0 & 0 & 0 & 5.00 & -18.41\\
         \cmidrule{1-2}
         \multirow{4}{*}{10} & \multirow{4}{*}{1} & 0 & 1 & 0 & 0 & 2.98 & -3.91\\
         &&    1 & 1 & 0 & 0 & 4.18 & -8.52\\
         &&    2 & 1 & 0 & 0 & 5.18 & -13.12\\
         &&    3 & 1 & 0 & 0 & 6.18 & -17.73\\
         \cmidrule{1-2}
         \multirow{5}{*}{10} & \multirow{5}{*}{10} & 0 & 0 & 1 & 0 & 3.97 & -3.51\\
          &&    1 & 0 & 1 & 0 & 5.27 & -8.11\\
          &&    2 & 0 & 1 & 0 & 6.27 & -12.71\\
          &&    3 & 0 & 1 & 0 & 7.27 & -17.32\\
          &&    0 & 4 & 0 & 1 & 13.36 & -18.87\\
        \cmidrule{1-2}
        \multirow{9}{*}{100} & \multirow{9}{*}{100} & 0 & 0 & 1 & 0 & 3.97 & -3.51\\
        &&      0 & 0 & 2 & 0 & 7.00 & -7.01\\
        &&      1 & 0 & 1 & 0 & 7.94 & -8.11\\
        &&      1 & 0 & 2 & 0 & 8.09 & -11.62\\
        &&      2 & 0 & 1 & 0 & 8.97 & -12.72\\
        &&      2 & 0 & 2 & 0 & 9.09 & -16.22\\
        &&      3 & 0 & 1 & 0 & 9.97 & -17.32\\
        &&      0 & 3 & 0 & 2 & 15.16 & -18.17\\
        &&      0 & 4 & 0 & 1 & 16.96 & -18.87\\
        \hline
    \end{tabular}
    \caption{Effect of usage costs on efficient designs.}
    \label{tab:usageCosts}
    }
    \hfill
    \parbox{.45\linewidth}{
\centering
    \begin{tabular}{cccccccc}
    \hline
    $r_1$ & $r_2$ & $x_1$ & $x_2$ & $x_3$ & $x_4$ & $g^o$ & $\ln g^f$\\ 
         \multirow{4}{*}{100} & \multirow{4}{*}{100} %
         & 1 & 0 & 0 & 0 & 1.99 & -4.61\\
         && 2 & 0 & 0 & 0 & 3.00 & -9.21\\
         && 3 & 0 & 0 & 0 & 4.00 & -13.82\\
         && 4 & 0 & 0 & 0 & 5.00 & -18.42\\
         \cmidrule{1-2}
        \multirow{8}{*}{300} & \multirow{8}{*}{100} %
         &  0 & 1 & 0 & 0 & 2.98 & -3.91\\
         && 1 & 0 & 0 & 0 & 3.99 & -4.61\\
         && 0 & 2 & 0 & 0 & 5.00 & -7.82\\
         && 1 & 1 & 0 & 0 & 6.00 & -8.52\\
         && 0 & 3 & 0 & 0 & 7.00 & -11.74\\
         && 1 & 2 & 0 & 0 & 8.00 & -12.43\\
         && 0 & 4 & 0 & 0 & 9.00 & -15.65\\
         && 1 & 3 & 0 & 0 & 10.00 & -16.34\\
         \cmidrule{1-2}
        \multirow{6}{*}{300} & \multirow{6}{*}{300}%
        &  1 & 0 & 0 & 0 & 3.99   & -4.61\\
        && 1 & 0 & 1 & 0 & 6.9997 & -8.11\\
        && 2 & 0 & 0 & 0 & 7.00   & -9.21\\
        && 3 & 0 & 0 & 0 & 10.00  & -13.82\\
        && 4 & 0 & 0 & 0 & 13.00  & -18.42\\
        && 0 & 4 & 0 & 1 & 29.00  & -18.87\\
        \cmidrule{1-2}
        \multirow{12}{*}{500} & \multirow{12}{*}{500}%
         & 0 & 0 & 1 & 0 & 3.97  & -3.51\\
        && 1 & 0 & 0 & 0 & 5.99  & -4.61\\
        && 0 & 0 & 2 & 0 & 7.00  & -7.01\\
        && 1 & 0 & 1 & 0 & 9.00  & -8.11\\
        && 0 & 0 & 3 & 0 & 10.00 & -10.52\\
        && 1 & 0 & 2 & 0 & 12.00 & -11.62\\
        && 0 & 0 & 4 & 0 & 13.00 & -14.03\\
        && 1 & 0 & 3 & 0 & 15.00 & -15.12\\
        && 2 & 0 & 2 & 0 & 17.00 & -16.22\\
        && 3 & 0 & 1 & 0 & 19.00 & -17.32\\
        && 4 & 0 & 0 & 0 & 21.00 & -18.42\\
        && 0 & 4 & 0 & 1 & 45.00 & -18.87\\
        \hline
    \end{tabular}
    \caption{Effect of repair costs on efficient designs.}
    \label{tab:repCosts}
    }
\end{table}

In our model we consider the inclusion of usage costs, corresponding to the costs of using components when there are no cheaper components in healthy condition. Usage costs have not typically been included in previous RAP formulations, and therefore it is useful to investigate their effects on the resulting optimal solutions. In this subsection we consider only the static BO-DOP as opposed to the full BO-IDDMP. This is for two main reasons: firstly, the static BO-DOP is sufficiently novel to demonstrate the effects of usage costs without introducing the complications of dynamic maintenance policies, and secondly, the results in \autoref{paper1:study:big} have already shown that designs found in exact solutions to BO-IDDMP but not found by BO-DOP are somewhat rare. 

Throughout this and subsequent subsections, we focus on problem 6-20 from the problem set given in \autoref{paper1:study:big}. The base parameters for this problem are given in \autoref{tab:probParams}. In this base problem, we see that components 1 and 2 dominate components 3 and 4 respectively in everything except installation costs $C$, which are a weaker constraining factor than weight. As such, we focus on varying $c_1$ and $c_2$, the usage costs of components 1 and 2. We consider different combinations of the cost values 1, 10, and 100. The pair of costs $c_1$ and $c_2$, the design solutions $x$ in general integer form, and the respective two objectives of operational costs $g^o$ and log failure rate $\ln g^f$ are reported in \autoref{tab:usageCosts}. Omitted from the table for each problem is the solution $x = (0,5,0,0)$, which is the solution to F-DOP, and is therefore always included in the front. 


Cost combination $(c_1,c_2) = (1,1)$ has Pareto-optimal solutions which are all multiples of component 1 (excluding the F-DOP solution), which makes sense intuitively as this is the most reliable component. This not only helps in the reliability objective, but also decreases long-run repair costs. For costs $(10,1)$, we see that all solutions excluding F-DOP instead have at least one copy of component 2, and some number of copies (including none) of component 1. This also aligns with our intuition, since component 1 is still the most reliable and the least expensive to maintain. Additionally, just one copy of component 2 means that we do not have to worry about the higher cost-rate of component 1 most of the time, so the effect of this higher value becomes negligible.  

For cost combination $(10,10)$, we see that components 3 and 4 start to become more desirable. The first four solutions are of a similar type to those found for $(10,1)$, as they all have one copy of a component with a low usage cost - which offsets the long-run impact of all other usage costs in the solution - and then some number of copies of component 1. We also have the solution $x = (0,4,0,1)$. This solution sees an interplay between the two objectives and the weight budget. We would like to add an extra component to $(3,0,1,0)$ for further reliability, but this would exceed our weight budget, so we instead have to use the lighter components to achieve a similar solution. This solution still follows the same pattern of having one component with a low usage cost and a number of copies of a higher reliability component.

The final combination $(100,100)$ is a very extreme example, because in this case the usage costs and repair costs for components 1 and 2 are equal. Despite this, they do not go unused, because they can be used alongside the cheaper-to-use components 3 and 4, so the extreme usage cost has a low impact. We now see solutions such as $(1,0,2,0)$ and $(2,0,2,0)$, which include two copies of the cheaper component as opposed to the one copy we saw for previous problems. The interpretation of this is not quite as clear, but it could be due to the fact that multiple copies of a cheap-to-use component further offset the contribution to the long-run average cost of the expensive-to-use yet reliable component 1. An explainable managerial insight here is that, aside from the choice of some ``primary'' cheap-to-use component type, the usage costs of backup components may be seen as a negligible consideration outside of extreme cases.

\subsection{Effect of heterogeneous repair costs}
\label{paper1:study:repairs}

\begin{figure}[tbp]
    \centering
    \begin{minipage}{.5\textwidth}
    \centering
    \includegraphics[width=\linewidth]{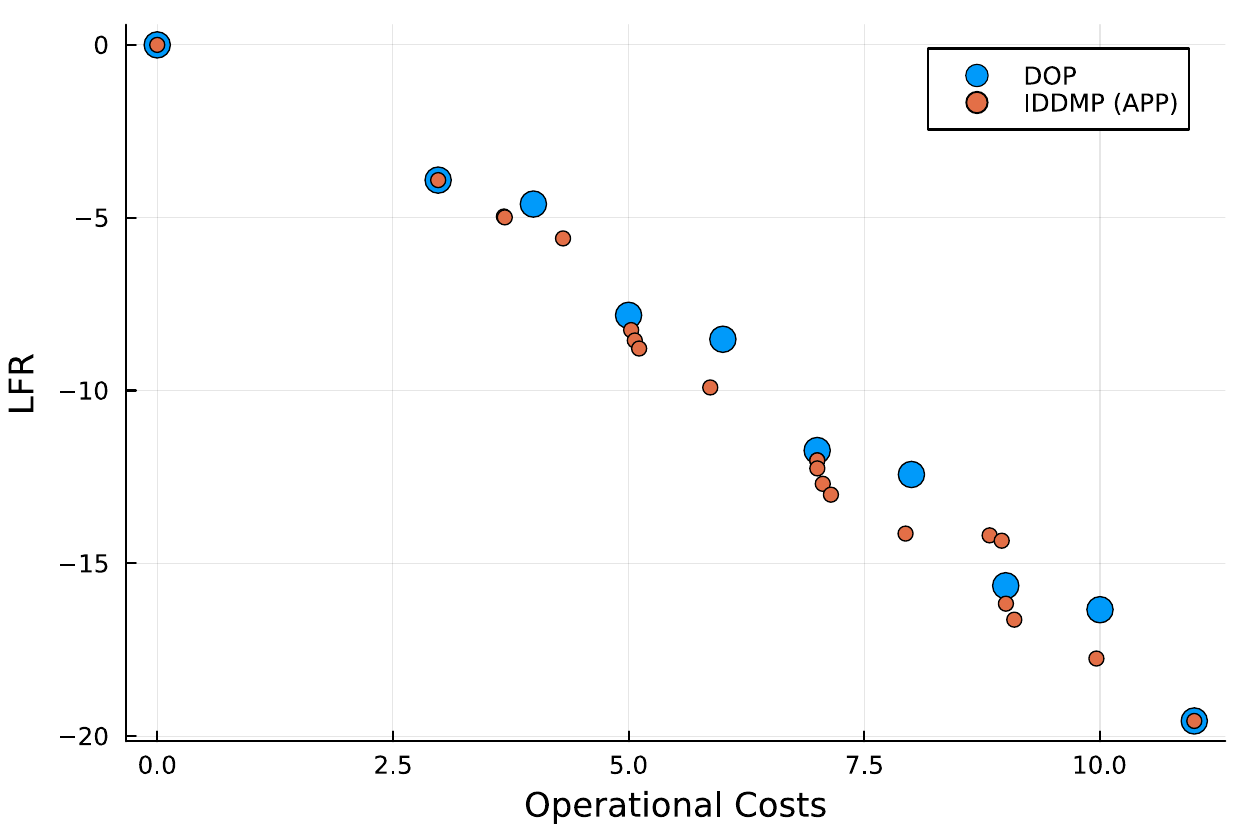}
    \captionof{figure}{\centering Pareto fronts of DOP and \\IDDMP solutions for $(r_1,r_2) = (300,100)$.}
    \label{fig:r=300-100}   
    \end{minipage}%
    \begin{minipage}{0.5\textwidth}
    \centering
    \includegraphics[width=\linewidth]{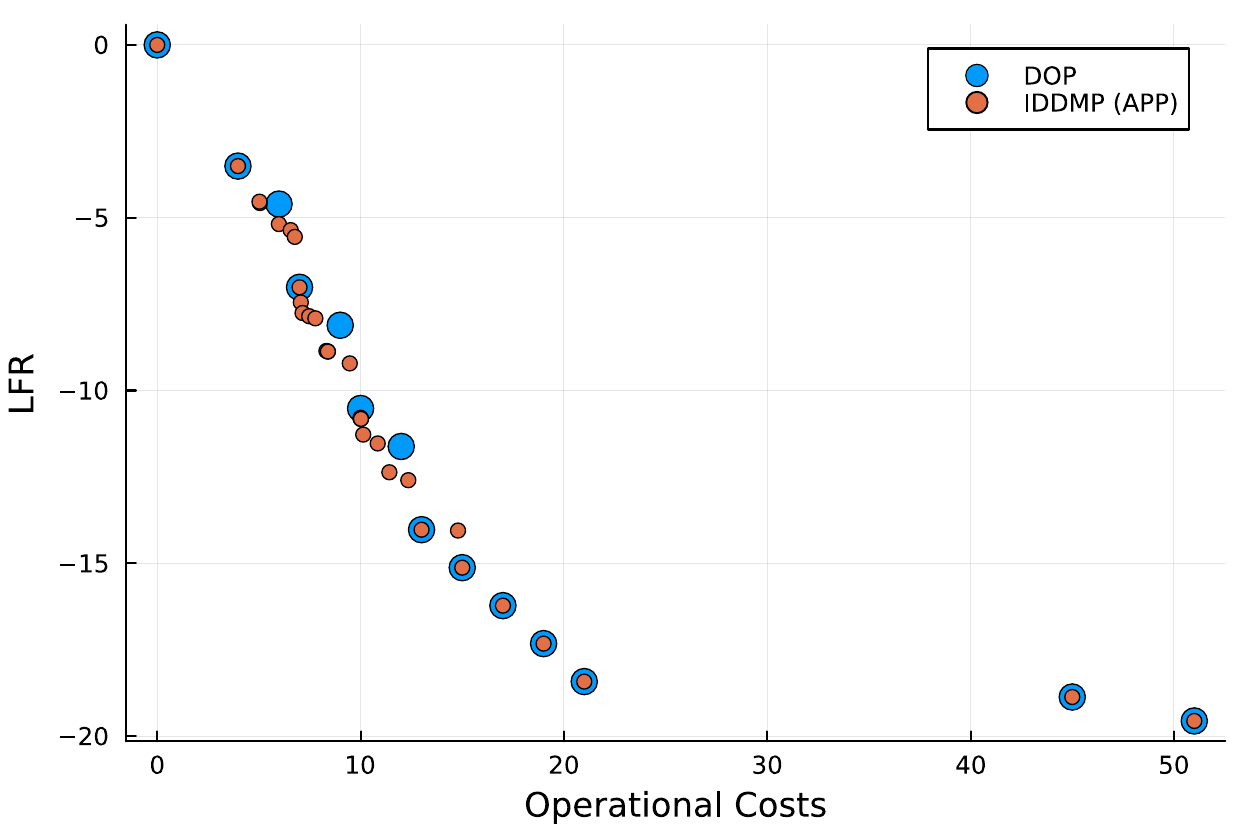}
    \captionof{figure}{\centering Pareto fronts of DOP and \\IDDMP solutions for $(r_1,r_2) = (500,500)$.}
    \label{fig:r=500-500}   
    \end{minipage}
\end{figure}

In addition to usage costs, another aspect of our model that differentiates it from much of the existing RAP literature is the inclusion of heterogeneous repair costs, whereby different components have different repair costs per unit time. We are aware of only one previous study that makes similar assumptions about repair costs \citep{LINS2011manyCost}, although several other works consider repair costs attributable to repair workers being assigned to subsystems. In this subsection we explore the impact of these costs on solutions to the problem, again using instance 6-20 from \autoref{paper1:study:big} as a basis. We again vary the repair cost-rate parameters of components 1 and 2, $r_1$ and $r_2$. \autoref{tab:repCosts} reports the repair costs being investigated, their Pareto-optimal BO-DOP solutions, and the objective values of these solutions under the fully active repair policy. 

The case $(r_1,r_2) = (100,100)$ is simply a re-run of the base problem. Case $(300,100)$ demonstrates the effect of tripling the repair cost-rate of the first component. Immediately of note is the fact that we obtain more design solutions. These can be split into two groups: those with one copy of component 1, and those with no copies of component 1. In the DOP case this is not particularly notable, but in the IDDMP case we see that all designs that use a copy of component 1 are dominated by a design without this component, when used alongside a dynamic maintenance policy. This demonstrates another situation where dynamic maintenance policies not only fill in the Pareto front, but actually dominate some of the static solutions. This is evident from the plot of the Pareto front in \autoref{fig:r=300-100}. Notably, this situation is not unrealistic, as it could be quite reasonable that a more reliable component is more expensive to perfectly repair or replace when it does fail. 

Case $(300,300)$ still primarily shows preference towards the first component, similarly to the base problem. Particularly of note within this problem is that it highlights an idiosyncrasy of our model. Solutions $(1,0,1,0)$ and $(2,0,0,0)$ are very similar. Their operational costs due to repairs are identical, as $r_1q_1 = 300(1-0.99) = 3$ and $r_3q_3 = 100(1-0.97) = 3$, and the usage costs of all components are identical. However, design $(1,0,1,0)$ is slightly less reliable than $(2,0,0,0)$, and therefore it incurs usage costs slightly less often and is not dominated by the other solution. In reality, it is clear that design $(1,0,1,0)$ should not be chosen as its usage costs are almost identical to those of $(2,0,0,0)$, and the latter solution is more reliable.

Case $(500,500)$ has a very populated Pareto front for the BO-DOP, with none of these solutions being idiosyncratic as described in the previous case, and with only two of these designs, namely $(1,0,1,0)$ and $(1,0,2,0)$, being dominated by a dynamic solution (see \autoref{fig:r=500-500}). The rest of the solutions are reasonably well spaced in terms of both objectives, with large jumps in cost only occurring between $(4,0,0,0)$ and $(0,4,0,1)$, and from that solution to $(0,5,0,0)$, which has a cost of 51. We observe that these solutions are quite diverse in their choices of component, with component 3 now becoming more preferable across the solution set.

{\subsection{Managerial Insights}
We can briefly summarize some of our findings at the qualitative level, in a way that could provide applicable advice for redundant systems more complex than those explored here. The first thing to note, which is apparent from \autoref{tab:usageCosts} and \autoref{tab:repCosts} and which was common across all experiments, is that the optimal system designs generally only included two types of component. We can interpret this as including a ``primary'' component that is better in terms of usage costs, and a ``secondary'' component which is better in terms of reliability and maintenance costs. At an intuitive level, it makes sense that this would hold for many systems with redundancy, where one type of component is included because it has a good balance between all factors including usage costs (where this cheaper usage cost is the one that will be incurred most of the time), and another type will be included for reasons other than its usage cost (i.e. where we don't mind occasionally paying this higher usage cost, because this won't happen very often). Of course, it may sometimes be the case that the optimal design of a system includes more types of component (in the case of awkward knapsack constraints, for example), but this two-component-type design principle seems reasonable as a baseline. 

The second aspect to summarize is the instances where dynamic policies seem to work the best. Across all instances tested in \autoref{paper1:study:big}, we found that dynamic policies always provided a richer and more complete set of Pareto-optimal solutions than the design-only problem, which is restricted to the ``always repair'' maintenance policy. This shows that dynamic policies can ``interpolate'' between designs in terms of their reliability and operational costs, allowing decision-makers to strike a more specific balance between the two. This is particularly useful when there are large gaps in the Pareto front of the design-only problem. We also found a number of instances where a dynamic policy dominated a design-only solution, and noted that this only occurred in problem instances with a larger budget, and therefore more scope for extra redundancy. {\color{black} A key takeaway here is that dynamic maintenance policies are at their best (i.e. they can dominate static policies) for systems with lots of redundant components, and this is likely due to the inherent leeway granted by such systems. This is best demonstrated by \autoref{fig:instance2-64}, which shows many static solutions dominated by dynamic solutions. Another insight is that the dynamic solutions show a more sub-linear relationship between costs and LFR compared to the mostly linear relationship between the two for static solutions. This indicates that dynamic solutions can obtain cost savings with a more favorable, less pronounced decrease in reliability. Again, this seems more likely for systems with more redundant components.} In real terms, this suggests that it is potentially worthwhile to install additional redundancy into a system and maintain it using a dynamic maintenance policy, especially in situations with more relaxed physical constraints, and where long-run costs and reliability are of greater concern than upfront costs. {\color{black} This additional redundancy could also be useful in situations where repairs for some reason cannot be performed straight away, for example due to repair worker availability or budgetary issues.} 


While these managerial insights seem to make sense at the high level, it is important to note that we have only discussed parallel systems under simplifying assumptions such as hot standby, perfect switching, and exponential failure and repair times. If this model is used to obtain system designs and dynamic maintenance policies, additional work must be done to ``sanity check'' the suggested solutions with respect to the real-world system, and to verify the performance of a design and/or maintenance strategy on a model with more of the important real-world details, e.g. by using simulation.
}

In this section we have demonstrated that our heuristic is capable of providing a multitude of high-quality solutions to the BO-IDDMP in far less time than the exact method, and have showcased and interpreted the effects of usage and repair costs on particular solutions. {We have also found that dynamic solutions greatly outnumber static solutions, and in some cases dominate static solutions. This demonstrates that the integration of strategic design decisions with operational maintenance decisions in a unified model is a worthwhile endeavor.}

\section{Conclusion}\label{paper1:conclusion}
In this paper we have introduced a novel BO-MDP Design extension of the RAP, with parameters such as usage and repair costs which are seldom seen in the RAP literature. Due to the complexity of the resulting problem, we constructed a bespoke heuristic to obtain an approximate Pareto front. We compared the performance of this heuristic to that of an exact method for small problem instances inspired by earlier research, and found that our heuristic was much faster than the exact method and also found more solutions due to its reduced sensitivity to parameter values, and that the solutions found were optimal in the majority of cases. We also demonstrated the advantages of considering dynamic maintenance policies, showing that they lead to better populated Pareto fronts, and sometimes find design-policy pairs that dominate static solutions. We also investigated the effects of varying the usage costs and repair costs, and in all cases were able to provide intuitive interpretations of the results.

{While our focus on parallel systems is arguably quite restrictive, we believe that this work offers an important first step in integrating system design and dynamic maintenance decisions. Extending to the series-parallel case is non-trivial, as the number of states in the MDP grows exponentially with the number of subsystems, even before adding in design decisions. As such, extending the work presented in this paper to the series-parallel case offers an interesting direction for future research. We believe that such systems could be decomposed into parallel system subproblems via methods such as Dantzig-Wolfe decomposition, allowing us to use the methods in this paper to solve those subproblems.} There are also many other variations of the RAP as identified in \autoref{paper1:intro:rap}, such as $k$-out-of-$n$ systems or complex systems, cold or mixed standby with imperfect switching, the allocation of repair teams, and multi-state components. A limitation of the model employed in this paper is the need to assume exponential failure and repair times in order to use an MDP formulation. Further work could be done to evaluate the performances of solutions to our model on more realistic systems with non-exponential event times, or one could go further and integrate general time distributions into the optimization model directly. {Another limitation of our model is the hot-standby assumption, where components always have the same constant rate of failure, regardless of whether or not they are being used. The MDP model could be extended to the cold- or warm-standby cases by including switching decisions whenever the in-use component fails, or whenever a component comes back online. Switching decisions would aim to find a balance between usage costs and the reliability and future maintenance costs of available components.} On a final note, further work could be done to investigate how to improve upon the heuristic presented in this paper, or to construct an efficient algorithm for finding exact solutions.

From the methodological point of view, we have contributed to the emerging MDP Design problem. This is a very young and sparse area of literature, and developing the theory of this field should be important and fruitful, due to the potential of applying the model to other real-world problems such as inventory and queuing problems \citep{BROWN2024mdpDesignLetters}. 
We have also introduced the APP method, where for bi-objective MDP Design problems we first obtain a set of first-stage decisions corresponding to a Pareto front based on simplifying assumptions on the second-stage decisions, and then obtain a new Pareto front for each of these first-stage decisions by optimizing the second-stage decisions. It remains to see how well this methodology works for other MDP Design problems.  

{As we have presented APP as a general heuristic framework for bi-objective MDP Design problems, an interesting research direction would explore how APP could work in other settings. For example, one could consider a queuing design and control problem. The design aspects could include the hiring of servers with different service rates or the allocation of limited waiting space to different queues, and the second-stage decisions could include the routing of customers to queues. Objectives could be based on server hiring costs, overall customer satisfaction (e.g. due to waiting times), satisfaction of different customer types, etc. A parameterized family of heuristics could be based on static stochastic routing policies, i.e. state-agnostic random routing of customers to queues. These are well-known to provide good baseline policies for queuing systems, which can be improved upon by dynamic polices \citep{shone2020queueing}. The design problem would then find different designs that balance between the different objectives, and the second stage would find deterministic dynamic policies over each of the designs. Queuing control itself is a hard problem; however, dynamic heuristics can be used that are known to improve upon static stochastic policies \citep{krishnan1990,argon2009,shone2020queueing}.} 

{While the APP method has been successful in our study, we acknowledge that in general APP has no iterative element, and cannot explore solutions beyond its two stages. As such, the development of metaheuristic methods for MDP Design problems remains an interesting research direction.}

\subsection{Acknowledgements}
{\color{black} We are grateful for the constructive feedback and suggestions provided by the anonymous reviewers, which have helped to strengthen this paper.} This paper is based on work completed at the EPSRC funded STOR-i Centre for Doctoral Training (EP/S022252/1).

\bibliographystyle{apalike}
\bibliography{bib}

\appendix
\section{Section 2 Supplement}

\subsection{Weakly Communicating and not Unichain}
Here, we provide the proof regarding the weakly communicating but not unichain properties of DMP. 

\begin{theorem}
    DMP is weakly communicating for $N>0$. However, there exist $N$ and $M_i$ for $i=1,...,N$ such that DMP is not unichain.
\end{theorem}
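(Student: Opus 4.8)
The plan is to establish the stronger property that DMP is \emph{communicating}, from which weakly communicating follows immediately by taking the single communicating class to be all of $\mathcal{S}$ (empty transient part). The natural hub for the argument is the all-healthy state $\bm{0}$ (with $s_{i1}=s_{i2}=0$ for every $i$). First I would record two reachability facts. (i) From any state $\bm{s}$, the state $\bm{0}$ is reachable: the impulsive ``repair-all'' action $a_i=s_{i2}$ sends $\bm{s}$ to the post-decision state in which every non-healthy component is repairing, and thereafter a sequence of repair-completion events (each of positive rate $\tau_i$, with no intervening failures) drives the chain to $\bm{0}$ with positive probability in finitely many steps. (ii) From $\bm{0}$, any state $\bm{s'}$ is reachable: under the (forced) zero action, letting components fail one at a time reaches, with positive probability, the state with $s_{i1}=0$ and $s_{i2}=s'_{i1}+s'_{i2}$ for every $i$, and a single impulsive action repairing $s'_{i1}$ of each type then lands exactly on $\bm{s'}$.

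The key remaining step, and the main obstacle, is that communicating requires, for each \emph{ordered} pair $(\bm{s},\bm{s'})$, a \emph{single} deterministic stationary policy realizing $\bm{s}\to\bm{0}\to\bm{s'}$, whereas (i) and (ii) are a priori witnessed by different policies. I would resolve this by exhibiting a \emph{simple} (non-repeating) state path through $\bm{0}$ and assigning to each visited state the action that advances it one step (actions off the path being irrelevant); consistency of a stationary policy is then automatic precisely because no state is visited twice. The two halves are nearly disjoint: after the initial action the first half lives entirely in states with $s_{i2}=0$, while the failure-accumulation states of the second half all have $s_{i1}=0$, so the halves meet only at $\bm{0}$, where both prescribe the zero action. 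The only delicate cases are boundary ones where $\bm{s}$ might recur on the second half: this is impossible if $\bm{s}$ has a repairing component, and otherwise I would either route the failures of the second half so as to avoid $\bm{s}$, or, when the detour through $\bm{0}$ is unnecessary, reach $\bm{s'}$ from $\bm{s}$ directly by failing and/or repairing as appropriate. Throughout, the impulsive-action bookkeeping $q(\bm{s},\bm{a},\cdot)=q(\bm{s}\oplus\bm{a},\bm{0},\cdot)$ and $q(\bm{s},\bm{a},\bm{s}\oplus\bm{a})=0$ must be applied correctly at the one or two states where a nonzero action is used.

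For the second assertion I would give an explicit counterexample with $N=1$ and $M_1=2$, writing the six states as $(s_{11},s_{12})$. Consider the stationary policy that repairs the single damaged component at $(0,1)$ and at $(1,1)$, and takes the zero action everywhere else, in particular at $(0,2)$. Then $(0,2)$ has no healthy component to fail and no repairing component to complete, so under the zero action it is absorbing and constitutes a recurrent class on its own. Moreover the impulsive repair at $(0,1)$ removes the only in-transition to $(0,2)$, since $q((0,1),1,(0,2))=q((1,0),\bm{0},(0,2))=0$. Consequently the remaining states $(0,0),(0,1),(1,0),(1,1),(2,0)$ form a second closed, irreducible recurrent class that never reaches $(0,2)$. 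The existence of two recurrent classes under a single deterministic stationary policy shows that DMP is not unichain for these $N$ and $M_i$, completing the proof.
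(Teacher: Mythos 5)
Your proof of the second assertion (not unichain) is essentially sound, and your counterexample with $N=1$, $M_1=2$ is in fact simpler than the paper's (which uses $N=2$, $M_1=1$, $M_2=2$): the state $(0,2)$ is absorbing under the zero action and is cut off from the rest of the chain once $(0,1)$ takes the repair action, so two recurrent classes exist. However, one detail in it is wrong in a way that exposes the fatal gap in your first part: $(2,0)$ does \emph{not} belong to the second recurrent class. It is transient, because it can never be entered --- nothing transitions into the all-repairing state --- so the second class is only $\{(0,0),(0,1),(1,0),(1,1)\}$ (the conclusion survives, since two recurrent classes still exist).

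This same phenomenon destroys your first part. DMP is \emph{not} communicating, so the plan of proving the stronger property and taking an empty transient part cannot work. Every actual transition of the chain is either a failure event, whose target state has at least one damaged component, or a repair completion, whose target has at least one healthy component; and a nonzero impulsive action never moves the chain to its post-decision state, since $q(\bm{s},\bm{a},\bm{s}\oplus\bm{a})=0$. Consequently the all-repairing state $((M_1,0),\ldots,(M_N,0))$ --- which has neither a damaged nor a healthy component --- is unreachable from every other state under every policy, so taking $\bm{s'}$ to be this state defeats any candidate policy. The unreachability of post-decision states also breaks your step (ii) well beyond this one state: your final impulsive action repairing $s'_{i1}$ copies of each type does not ``land exactly on $\bm{s'}$''; by the bookkeeping you yourself quote, the chain leapfrogs $\bm{s'}$ and jumps directly to a neighbor of $\bm{s'}$, so your route never visits \emph{any} target state that has a repairing component. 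The paper's proof is engineered precisely around this: it passes through the all-damaged state, repairs $M_i - s'_{i2}$ copies of type $i$ (more than $s'_{i1}$), and lets genuine repair-completion events form the last transitions into $\bm{s'}$; when $\bm{s'}$ has no healthy components it repairs one additional component and uses a completion-then-immediate-failure pair of events to arrive at $\bm{s'}$; and it classifies the all-repairing state as transient under all policies --- which is exactly why the theorem asserts ``weakly communicating'' rather than ``communicating.'' To repair your argument you would need to adopt this structure: exclude the all-repairing state as the (nonempty) transient part, and ensure the final entry into every target state is an actual event rather than an impulsive action.
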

\begin{proof}
    We first show that DMP is weakly communicating. Let $\mathcal{S} = \bigtimes_{i=1}^N \{(s_1,s_2):s_1+s_2\leq M_i\}$, and choose two states $s,s'\in\mathcal{S}$. Assume for now that state $s'$ contains at least one healthy component. We choose an action $a\in\mathcal{A}(s')$ as follows:
    $$a_i = M_i - s'_{i2}.$$
    We then define the policy $\mu$ as follows:
    $$\mu(s) = \left\{\begin{array}{cc}
         a, & s=s_3,\\
         0, & \text{otherwise.}
    \end{array}\right.,$$
    where $s_3 = ((0,M_1),...,(0,M_N))$. Let $p_\mu^m(s,s')$ be the probability of reaching state $s'$ from state $s$ after $m$ transitions under policy $\mu$. Clearly, $p_\mu^m(s,s_3) > 0$, for some $m>0$ due to the inactivity of policy $\mu$, and $p_\mu^n(s_3, s') > 0$ for some $n>0$, as with non-zero probability after taking action $a$ in $s_3$, the suitable subset of components which are healthy in $s'$ all finish repairing before the components which are still repairing in $s'$. Therefore $p_\mu^{m+n}(s,s') > 0$.

    Assume now that state $s'$ has no healthy components, so all components are either damaged or currently being repaired. Taking action $a$ in state $s_3$ would ``leapfrog'' state $s'$ due to impulsive actions, so we amend action $a$ by starting repairs on an additional component (assuming that this exists). Then, with non-zero probability, this additional component both finishes repairing first and immediately fails again, leaving the desired components still repairing.

    All that is left now is the consideration of state $s_2 = ((M_1,0),...,(M_N,0))$. In this state, there is no ``additional'' component to repair, as all components are currently being repaired. In fact, this state is transient under all policies, as transitions from any state under any action represent either: a repair completion, so the next state must have at least one healthy component; or a repair failure or new degradation, meaning the next state must have at least one damaged component. Therefore, $s_2$ is transient under all policies, meaning DMP is weakly communicating as $p_\mu^m(s,s') > 0$ for all pairs of states $s,s'$ for some policy $\mu$, except for $s'=s_2$, which is uniquely transient under all policies. 

    It remains to show that DMP is not unichain for some $N$ and $M_i$. Consider a DMP with $N=2, M_1 = 1, M_2 = 2.$, and a policy satisfying the following:
    \begin{align*}
        \mu((0,1),(0,2)) &= (1,0)\\
        \mu((0,0),(0,2)) &= (0,0)\\
        &\\
        \mu((0,1),(1,0)) &= (0,0)\\
        \mu((0,1),(1,1)) &= \mu((0,1),(0,1)) = (0,1)\\
        \mu((0,1),(0,0)) &= \mu((0,1),(2,0))) = (0,0).
    \end{align*}

    Suppose we start in state $((0,1),(0,2))$, then it is clear to see we have recurrent class\\ $\{((0,1),(0,2)),((0,0),(0,2))\}$, as all we ever do is repair component type 1, and ``leapfrog'' state $((1,0),(0,2))$. We also have recurrent class\\ $\{((0,1),(1,0)),((0,1),(1,1)),((0,1),(0,1)),((0,1),(0,0)),((0,1),(2,0)))\}$. This is because when starting from any of these states, we follow a ``fully active'' maintenance policy on the two components of type 2, whilst ignoring component 1. From this class, we never degrade into state $((0,1),(0,2))$, as degradations happen one at a time, and are always immediately placed under repair. Therefore, the chain induced by this policy has two recurrent classes, therefore DMP is not unichain for this configuration of $N$ and $M_i$.
\end{proof}

\subsection{Efficient Construction of IDDMP State Space}

In Section 2.2 of the main paper, we note that many of the states in $\mathcal{S}^{\max}$ and many of the state-action pairs are infeasible, that is to say that $\pi(\bm{s},\bm{a})$ must be zero because no feasible design allows for them to take on any other value. As such, it is ideal to avoid generating such states and state-action pairs at all, allowing for larger problem instances to be solved. Note that the $\mathcal{S}_i$ (the single-type state spaces) are already constructed so as to follow the knapsack constraints.

We construct the reduced state space using a dynamic programming approach, starting by producing the state space the only uses two component type, then using this to construct the state space that only uses three component types, and so on. Let $x(s)$ represent the minimal design that allows for state $s$, i.e. the design that has $M_i-s_{ij2}$ copies of each component type $i$. Let $\mathcal{S}^{\max}_{1:n}$ be the space of partial states that only allows for the first $n\leq N$ component types. If $s_{1:n}\in \mathcal{S}^{\max}_{1:n}$, $[x(s_{1:n})]_i$ returns 0 for $i>n$. We start by defining $\mathcal{S}^{\max}_{1:2}=\left\{s_{1:2}\in \mathcal{S}_1\times \mathcal{S}_2: Ax(s_{1:2})\leq b\right\}$. This set is constructed computationally by enumerating all pairs $(s_1,s_2)$ and checking the knapsack conditions. We then use the recurrence $\mathcal{S}^{\max}_{1:n}=\left\{s_{1:n}\in \mathcal{S}^{\max}_{1:(n-1)}\times \mathcal{S}_n:Ax(s_{1:n})\leq b\right\}$. To construct $\mathcal{S}^{\max}_{1:3}$ up to $\mathcal{S}^{\max}_{1:N}$, which we call $\mathcal{S}^{\max}_{knap}$, which can be used in place of $\mathcal{S}^{\max}$ as it includes all states for which $\pi$ can feasibly take a non-zero action. This method allows for $\mathcal{S}^{\max}_{knap}$ to be enumerated without explicitly constructing $\mathcal{S}^{\max}$ and evaluating the knapsack constraint for each state.

The action spaces $\mathcal{A}(s)$ can be constructed similarly. For any state $s$, we start with $\mathcal{A}_i(s_i) = \left\{0,...,s_{ij2}\right\}$. Then $\mathcal{A}_{1:2}((s_1,s_2))=\left\{a_{1:2}\in A_1(s_1)\times A_2(s_2):Ax((s_1,s_2)\oplus a_{1:2})\leq b\right\}$, via abuse of notation on the impulsive operator (recall this is used to give every state-action pair a ``target'' state that it is effectively the same as). Recursively, $\mathcal{A}_{1:n}((s_1,...,s_n))=\left\{a_{1:n}\in \mathcal{A}_{1:(n-1)}\times \mathcal{A}_n: Ax((s_1,...,s_n)\oplus a_{1:n})\leq b\right\}$. $\mathcal{A}_{1:N}(s)$, aka $\mathcal{A}_{knap}(s)$ can then be used in place of $\mathcal{A}(s)$ for all $s\in\mathcal{S}^{\max}_{knap}$. When constructing the MILP model for IDDMP, when then need only construct the variables $\pi(s,a)$ such that $s\in\mathcal{S}^{\max}_{knap}$ and $a\in\mathcal{A}_{knap}$, and only need to construct the MDP balance equation constraints for states in $\mathcal{S}^{\max}_{knap}$. This allows the exact MILP formulation to scale better.

\newpage
\section{Section 3 Supplement}
\label{sec:supp3}
\autoref{sec:supp3:bound} provides a tighter bound of $M_i$, the maximum number of copies of component type $i$, for $\varepsilon-\delta-$DOP. This can reduce the number of binary decision variables in the model, and also allows for solutions to be obtained when no knapsack constraints are provided. To prove this bound, we also prove that the objective function of $\varepsilon-\delta-$DOP is supermodular. \autoref{sec:supp3:alg} provides the algorithms used for the application of APP to BO-IDDMP.

\subsection{Tighter Bound for $M_i$}
\label{sec:supp3:bound}

We state the bound in \autoref{proposition}.
\begin{proposition} \label{proposition}
    The optimal number of copies of component $i$ for $\varepsilon-\delta-$DOP is no more than 
    $$M_i = \min\left\{\max\left\{\llceil\left(\frac{1}{\ln q_i}\right)\ln\left\{-\frac{q_ir_i}{[(1+\delta)c_N - c_i]\ln q_i}\right\}\rrceil, \left\lceil\frac{\varepsilon}{\log q_i}\right\rceil\right\}, \min_j\{b_j/a_{ji}\}\right\}.$$
\end{proposition}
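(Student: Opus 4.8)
The plan is to show that the three quantities combined in the bound come from three independent reasons to cap the number of copies $n_i := \sum_{j} x_{ij}$ of component type $i$ in an optimal solution: the knapsack constraints (the outer $\min$), the reliability ($\varepsilon$-) constraint (the second term of the $\max$), and a marginal cost--benefit argument (the first term of the $\max$). First I would rewrite the objective of $\varepsilon-\delta$-DOP purely in terms of the counts $n_i$ using the ``cheapest healthy component'' decomposition. Writing $R_k = \prod_{m\le k} q_m^{n_m}$ for the probability that every component of type $\le k$ is non-healthy under the fully active policy, the repair cost is $\sum_i r_i q_i n_i$, the expected usage cost is $\sum_i c_i R_{i-1}(1 - q_i^{n_i})$, and the failure penalty is $(1+\delta) c_N R_N$. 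The knapsack term is then immediate, since $n_i \le \min_j\{b_j/a_{ji}\}$ holds for any feasible $x$, accounting for the outer minimum.

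The core is a marginal (exchange) analysis. Fixing the counts of all other types, I would study $\Delta(n)$, the change in the objective when the number of copies of type $i$ grows from $n$ to $n+1$. Adding a copy raises the repair cost by exactly $r_i q_i$, and its only benefit is realized in states where every type $\le i$ is currently non-healthy (probability $R_{i-1} q_i^{\,n}$) and the new copy is healthy (probability $p_i$); in each such state the saving is the previously incurred cost (a dearer component, or the failure penalty) minus $c_i$, which is at most $(1+\delta) c_N - c_i$. Hence the benefit is at most $R_{i-1} q_i^{\,n} p_i [(1+\delta) c_N - c_i] \le q_i^{\,n} p_i [(1+\delta) c_N - c_i]$, giving the uniform lower bound $\Delta(n) \ge r_i q_i - q_i^{\,n} p_i [(1+\delta) c_N - c_i]$. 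This is non-negative precisely once $q_i^{\,n} \le r_i q_i /\bigl(p_i[(1+\delta)c_N - c_i]\bigr)$. Taking logarithms (noting $\ln q_i < 0$) and then applying the elementary inequality $p_i = 1 - q_i \le -\ln q_i$ to pass to closed form yields exactly the first term of the $\max$; since $-\ln q_i \ge p_i$, this replacement only enlarges the threshold, so it remains a valid upper estimate of the cost-optimal count.

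With these pieces I would close by an exchange argument. Suppose an optimal solution installs $n_i > \max\{\text{term}_1, \text{term}_2\}$ copies of type $i$. Since $n_i - 1 \ge \text{term}_1$, the lower bound above gives $\Delta(n_i - 1) \ge 0$, so deleting one copy does not increase the objective. Since $n_i - 1 \ge \text{term}_2 = \lceil \varepsilon/\log q_i\rceil$, we have $(n_i-1)\log q_i \le \varepsilon$, so type $i$ alone still meets the log-failure target; as every other type contributes non-positively to the left-hand side of constraint (\ref{e-DOP-epsilon}), feasibility is preserved. Thus an equally good feasible solution with one fewer copy of type $i$ exists, and iterating drives $n_i$ down to $\max\{\text{term}_1,\text{term}_2\}$, which together with the knapsack cap gives the stated bound.

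The step I expect to be the main obstacle is making the marginal analysis rigorous in the full multi-type setting rather than treating type $i$ in isolation: the benefit of an extra copy depends on the configuration of every other type through $R_{i-1}$, so the estimate must be made uniform over all such configurations (the bound $R_{i-1} \le 1$ does this), and one must verify that $\Delta(n)$ stays non-negative for \emph{all} $n$ beyond the threshold, which is where supermodularity of the objective (established in Section B.1) supplies the clean monotonicity of the marginal costs. Some care is also needed at the degenerate boundary $(1+\delta)c_N = c_i$, where the first term formally tends to $-\infty$ and the $\max$ with the reliability term correctly takes over.
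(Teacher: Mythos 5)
Your proof is correct, and it reaches the stated bound by a genuinely different route than the paper. The paper's proof works in two separate steps: first it analyses the \emph{single-type} problem via calculus on the continuous relaxation (showing $g_i(x) = q_ir_ix + c_i(1-q_i^x) + (1+\delta)c_Nq_i^x$ is convex with stationary point exactly at the stated threshold), and then it invokes a set-function supermodularity lemma to transfer that single-type conclusion to arbitrary multi-type configurations (the marginal of an extra copy can only be larger in a larger configuration, so if it is non-negative when type $i$ stands alone it is non-negative everywhere). You instead perform the marginal analysis directly in the full multi-type problem, obtaining the uniform lower bound $\Delta(n) \geq r_iq_i - q_i^{\,n}p_i\left[(1+\delta)c_N - c_i\right]$ via $R_{i-1}\leq 1$ and the observation that the fallback cost is a convex combination of $\{c_k\}_{k>i}$ and $(1+\delta)c_N$; this makes the supermodularity machinery unnecessary, and in fact your closing worry on that point is unfounded: your explicit lower bound is monotone increasing in $n$ (since $q_i^{\,n}$ decreases), so non-negativity of $\Delta$ beyond the threshold propagates automatically without any appeal to supermodularity. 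Two further remarks. First, your threshold with $p_i$ in the denominator is \emph{sharper} than the paper's formula, which has $-\ln q_i$ there; your use of $p_i \leq -\ln q_i$ to pass to the stated (weaker) formula is in the correct direction, whereas the paper obtains $-\ln q_i$ natively because its threshold is the stationary point of the continuous relaxation rather than a discrete-difference threshold. Second, your explicit verification that deleting a copy preserves feasibility of the $\varepsilon$-constraint (because $(n_i-1)\log q_i \leq \varepsilon$ already certifies the constraint and all other terms are non-positive) is handled only implicitly in the paper, which builds the $\varepsilon$-term into the candidate set $K$ from the start; your exchange formulation makes this step cleaner. What the paper's route buys in exchange is the supermodularity lemma itself, which it reuses in the main text to classify $\varepsilon$-$\delta$-DOP as a supermodular optimization problem, and a derivation that explains the precise closed form as an exact minimizer; what your route buys is a shorter, fully elementary, self-contained argument that also degrades gracefully at the boundary case $(1+\delta)c_N = c_i$, which the paper's lemma does not explicitly address.
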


In order to prove this, we make use of an alternative formulation of $\varepsilon-\delta-$DOP which uses general integer variables instead of binary variables, which we call $\varepsilon-\delta-$DOP-G, and a few additional definitions and lemmas. We start with the model:

\begin{align}
    &\text{($\varepsilon-\delta-$DOP-G)}& \min_x\,&\sum_{i=1}^N r_iq_ix_i + \sum_{i=1}^Nc_i(1 - q_i^{x_i})\prod_{k<i}q_k^{x_k} + (1+\delta)c_N\prod_{i=1}^N q_i^{x_i}\label{e-DOP-G-obj}\\
    &&\text{s.t. } &Ax\leq b \label{e-DOP-G-knapsack}\\
    &&&\sum_{i=1}^N \log(q_i)x_i \leq \varepsilon \label{e-DOP-G-epsilon}\\
    &&&x_i \in \{0,1,2,3,...\} \text{ for all $i$ in $1,...,N$} \label{e-DOP-G-int}.
\end{align}

We now analyse single-component-type and multi-component-type variations of the problem to work towards a proof of \autoref{proposition}.

\begin{lemma}
\label{lemma:convex}
    The objective function of $\varepsilon-\delta-$DOP-G when allowing for only one component type $i$ in the solution is a convex function, with minimum: 
    $$x_i^* \in \left\{ \llfloor\left(\frac{1}{\ln q_i}\right)\ln\left\{-\frac{q_ir_i}{[(1+\delta)c_N - c_i]\ln q_i}\right\}\rrfloor, \llceil\left(\frac{1}{\ln q_i}\right)\ln\left\{-\frac{q_ir_i}{[(1+\delta)c_N - c_i]\ln q_i}\right\}\rrceil\right\}.$$
\end{lemma}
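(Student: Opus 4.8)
The plan is to reduce the multivariate objective \eqref{e-DOP-G-obj} to a single real variable, verify convexity via the second derivative, solve the first-order condition in closed form, and then invoke the standard fact that a convex function attains its minimum over the integers at one of the two integers bracketing its continuous minimizer.

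First I would substitute $x_k = 0$ for all $k \neq i$ into \eqref{e-DOP-G-obj}. In the middle summation every term with $j \neq i$ carries a factor $(1 - q_j^{x_j}) = (1 - q_j^{0}) = 0$ and so vanishes, while the $j = i$ term reduces to $c_i(1 - q_i^{x_i})$ since $\prod_{k<i} q_k^{x_k} = 1$. The first summation collapses to $r_i q_i x_i$ and the penalty term to $(1+\delta)c_N q_i^{x_i}$. Collecting these gives the single-variable objective
\[
f(x_i) = r_i q_i x_i + c_i + \big[(1+\delta)c_N - c_i\big] q_i^{x_i},
\]
which I would treat as a function of a real argument $x_i \geq 0$ through $q_i^{x_i} = e^{x_i \ln q_i}$.

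Next I would establish convexity and locate the minimizer. Since $q_i = \alpha_i/(\tau_i + \alpha_i) \in (0,1)$ we have $\ln q_i < 0$, so $f''(x_i) = \big[(1+\delta)c_N - c_i\big](\ln q_i)^2 q_i^{x_i}$. Because the components are indexed in increasing order of usage cost (component $i$ incurs its usage cost only when every cheaper copy is down), $c_N$ is the largest usage cost, hence $c_N \geq c_i$; combined with $\delta \geq 0$ this gives $(1+\delta)c_N - c_i \geq 0$ and therefore $f''(x_i) \geq 0$, so $f$ is convex on $[0,\infty)$. Setting $f'(x_i) = r_i q_i + \big[(1+\delta)c_N - c_i\big](\ln q_i) q_i^{x_i} = 0$, solving for $q_i^{x_i}$ and taking logarithms yields the continuous minimizer $\frac{1}{\ln q_i}\ln\left\{-\frac{q_i r_i}{[(1+\delta)c_N - c_i]\ln q_i}\right\}$, where the argument of the outer logarithm is positive precisely because $\ln q_i < 0$. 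Convexity then forces the integer minimum to be attained at either the floor or the ceiling of this value, which is exactly the claimed set.

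The hard part will be the degenerate case $(1+\delta)c_N - c_i = 0$, where $f$ collapses to the affine function $r_i q_i x_i + c_i$, which is strictly increasing (as $r_i q_i > 0$), so the minimum lies at $x_i = 0$ while the closed-form expression is undefined; I would note that $\delta \geq 0$ together with $c_N \geq c_i$ renders this boundary case non-generic and dispose of it separately. I would also remark that when the continuous minimizer is negative the restriction $x_i \geq 0$ forces the integer optimum to $0$, a situation handled by the outer $\max$ and $\min$ in the enclosing \autoref{proposition}; the lemma itself requires only the convexity-plus-bracketing argument.
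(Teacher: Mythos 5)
Your proposal is correct and follows essentially the same route as the paper's proof: reduce to the single-variable function $g_i(x) = q_ir_ix + c_i(1-q_i^x) + (1+\delta)c_Nq_i^x$, verify convexity via the nonnegative second derivative $[(1+\delta)c_N - c_i](\ln q_i)^2 q_i^x$, solve the first-order condition for the continuous minimizer, and invoke the floor/ceiling bracketing for convex functions on the integers. Your treatment is in fact slightly more careful than the paper's, since you explicitly justify $(1+\delta)c_N \geq c_i$ from the cost ordering of components and separately dispose of the degenerate affine case and the negative-minimizer case, which the paper leaves implicit.
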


\begin{proof}
    Consider the continuous relaxation of the objective function of ($\varepsilon-\delta-$DOP-G) with only one $x_i$ non-zero for some component $i$:
    $$g_i:\RR^+\to\RR,\,g_i(x) = q_ir_ix + c_i(1 - q_i^x) + (1+\delta)c_Nq_i^x.$$
    We compute first and second derivatives:
    \begin{align*}
        g_i'(x) &= q_ir_i - (\ln q_i)c_iq_i^x + (1+\delta)c_N(\ln q_i)q_i^x,\\
        &= q_ir_i + \left[(1+\delta)c_N - c_i\right](\ln q_i)q_i^x\\
        g_i''(x) &= \left[(1+\delta)c_N - c_i\right](\ln q_i)^2q_i^x \geq 0
    \end{align*}
    As $g_i''(x) \geq 0$, $g$ is convex and therefore $g$ restricted to integers is also convex. By solving $g_i'(x^*) = 0$ we obtain a global minimum at:
    $$x^* = \left(\frac{1}{\ln q_i}\right)\ln\left\{-\frac{q_ir_i}{[(1+\delta)c_N - c_i]\ln q_i}\right\},$$
    hence the integer minimum will be this value rounded up or down.
\end{proof}

\begin{corollary}
    $g_i$ is increasing for $x > x^*$ as defined in \autoref{lemma:convex}.
\end{corollary}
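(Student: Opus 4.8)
The plan is to read off the corollary directly from the two facts already established inside the proof of \autoref{lemma:convex}: that $g_i$ is convex on $\RR^+$ (because $g_i''(x) \geq 0$), and that $x^*$ is a stationary point with $g_i'(x^*) = 0$. Convexity of $g_i$ is equivalent to $g_i'$ being non-decreasing, so for any $x > x^*$ I immediately get $g_i'(x) \geq g_i'(x^*) = 0$, which already yields that $g_i$ is non-decreasing on $(x^*,\infty)$. The only real work is to upgrade ``non-decreasing'' to ``increasing''.

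To do so, I would inspect the sign of the second derivative more carefully. Recall $g_i''(x) = \left[(1+\delta)c_N - c_i\right](\ln q_i)^2 q_i^x$. Since each $q_i \in (0,1)$ we have $\ln q_i < 0$, hence $(\ln q_i)^2 q_i^x > 0$ strictly, while the bracket $(1+\delta)c_N - c_i$ is nonnegative under the standing convention that $c_N$ is the largest usage cost (so $(1+\delta)c_N \geq c_N \geq c_i$) together with $\delta \geq 0$. Whenever this bracket is strictly positive we obtain $g_i'' > 0$, so $g_i'$ is strictly increasing and therefore $g_i'(x) > g_i'(x^*) = 0$ for every $x > x^*$; strict positivity of the derivative on $(x^*,\infty)$ gives that $g_i$ is strictly increasing there.

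The main obstacle — modest as it is — is the strict-versus-weak monotonicity distinction, and in particular confirming that the coefficient $(1+\delta)c_N - c_i$ does not vanish in the regime where $x^*$ is a genuine interior minimizer. I would note that the closed form for $x^*$ in \autoref{lemma:convex} is only well-defined precisely when $(1+\delta)c_N - c_i \neq 0$, so the interior case is exactly the strictly-convex case. In the degenerate boundary case $(1+\delta)c_N - c_i = 0$, the derivative collapses to the constant $q_i r_i > 0$, making $g_i$ strictly increasing everywhere, so the statement holds trivially. Beyond handling this edge case, the corollary is an immediate consequence of convexity plus the first-order condition and requires no further computation.
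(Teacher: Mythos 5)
Your proposal is correct and follows essentially the same route the paper intends: the paper states this corollary without a separate proof, treating it as an immediate consequence of \autoref{lemma:convex} (convexity of $g_i$ plus the first-order condition $g_i'(x^*)=0$), which is exactly the core of your argument. Your additional care in upgrading weak to strict monotonicity---checking that $(1+\delta)c_N - c_i > 0$ is exactly the regime in which the closed form for $x^*$ is well-defined, and noting that in the degenerate case $g_i' \equiv q_i r_i > 0$---is sound and slightly more rigorous than the paper requires, but does not constitute a different approach.
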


We now wish to extend this notation of convexity to the case where we allow for multiple copies of each component. As such, we must introduce the notations of set functions and supermodularity.

\begin{definition}[Set function]
    Let $\mathcal{L}$ be some set. A function $f:2^\mathcal{L} \to \RR$ whose domain is the power set of $\mathcal{L}$ is called a (real) set function, and $\mathcal{L}$ is called the base set.
\end{definition}

We can alter our representation of $g$ using set functions. Let $\mathcal{L}' = \{(i,n) | i\in C, n\in\NN\}$. This represents an expanded version of our component set $\mathcal{L}$ with an infinite number of unique copies of each component. We then enforce the following partial ordering:
\begin{align*}
    (i,m) &\preceq (j,n) \text{ for all }i < j, i,j\in\mathcal{L}\\
    (i,m) & \preceq (i,n) \text{ for all }i\in\mathcal{L}, m < n.
\end{align*}

We then define the set-function equivalent of $g$, $g^{SET}$, as:
$$g^{SET}:2^{\mathcal{L}'}\to\RR, g^{SET}(K) = \sum_{i\in K}q_ir_i + \sum_{i\in K}c_ip_i\prod_{j\in K,j\prec i}q_j + (1+\delta)c_N\prod_{i\in K}q_i,$$
where we write $i$ and $j$ instead of $(i,n)$ and $(j,m)$ for brevity. We can map decision variables $x$ to subsets of $\mathcal{L}'$ as follows:
$$K:X\to 2^{\mathcal{L}'}, K(x) = \{(i,n)|n\leq x_i\}.$$
We then obtain the equality $g(x) = g^{SET}(K(x))$.

\begin{definition}[Marginal Set function]
    Let $\mathcal{L}$ be a base set and $f:2^\mathcal{L} \to \RR$ be a set function. Let $X\subset \mathcal{L}$ be some strict subset of $\mathcal{L}$ and $c\in \mathcal{L}\backslash X$ be some element not contained in $X$. Then the function $f_X:\mathcal{L}\backslash X\to \RR$ satisfying $f_X(c) = f(X\cup\{c\}) - f(X)$ is called the marginal set function of $f$ at $X$, and is denoted $f(c|X)$.
\end{definition}

\begin{definition}[Set Function Supermodularity]
    Let $\mathcal{L}$ be the base set of some set function $f:2^\mathcal{L} \to \RR$. $f$ is said to be a supermodular set function if and only if, for any subsets $X \subset Y \subset \mathcal{L}$, and any element $c\in N\backslash Y$, the following holds:
    $$f(c|X) \leq f(c|Y).$$
\end{definition}

With our definitions laid out, we may state and prove an important property of our objective function.

\begin{lemma}
\label{lemma:supermodular}
    $g^{SET}$ is a supermodular set function.
\end{lemma}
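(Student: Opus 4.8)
The plan is to split $g^{SET}$ into a modular piece plus a single combined term, and then to prove supermodularity of that term by a short probabilistic coupling argument. Write $C := (1+\delta)c_N$. Because the types are indexed in nondecreasing order of usage cost, the augmentation guarantees $C \ge c_i$ for every type, so every element $j \in \mathcal{L}'$ has $c_j \le C$; moreover the partial order respects cost, i.e. $j \prec j'$ implies $c_j \le c_{j'}$. I would then decompose $g^{SET}(K) = \sum_{i\in K} q_i r_i + h(K)$, where
\[ h(K) = \sum_{i\in K} c_i p_i \prod_{j\in K,\, j\prec i} q_j \;+\; C\prod_{i\in K} q_i. \]
The first term is modular, so its marginals are constant and cannot affect supermodularity; hence it suffices to show $h$ is supermodular. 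It is worth flagging that the usage term \emph{alone} is in fact submodular, and it is the failure-penalty term, together with the bound $c_j \le C$, that restores supermodularity of the combination.

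Next I would give $h(K)$ a probabilistic reading: declare each element $j$ independently ``healthy'' with probability $p_j$ and ``damaged'' with probability $q_j$, and let the realized cost be the usage cost of the $\preceq$-cheapest healthy element of $K$, or $C$ if every element of $K$ is damaged. Then $h(K)=\EE[\text{cost}]$, and since every attainable cost lies in $[\min_{j\in K} c_j,\, C]$ we get $\min_{j\in K} c_j \le h(K) \le C$ for every $K$.

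I would then compute the marginal $h(c\mid K)=h(K\cup\{c\})-h(K)$ exactly by conditioning on the cheaper elements $A:=\{j\in K: j\prec c\}$. If some element of $A$ is healthy, the cheapest healthy element lies in $A$ regardless of $c$, so nothing changes; otherwise (probability $\prod_{j\in A} q_j$) inserting $c$ replaces the ``tail'' cost by $c_c$ exactly when $c$ is healthy. Writing $B:=\{j\in K: j\succ c\}$ and noting that the conditional tail expectation is precisely $h(B)$, this yields
\[ h(c\mid K)=\Big(\prod_{j\in A} q_j\Big)\, p_c\,\big(c_c - h(B)\big). \]
Every element of $B$ is $\succ c$ and so has cost $\ge c_c$, whence $h(B)\ge \min_{j\in B} c_j \ge c_c$; therefore $c_c - h(B)\le 0$, and in particular $h$ is nonincreasing under set inclusion.

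Finally, for $X\subseteq Y$ with $c\notin Y$, put $A_X=\{j\in X:j\prec c\}\subseteq A_Y$ and $B_X=\{j\in X:j\succ c\}\subseteq B_Y$. Two facts close the argument: $\prod_{A_Y} q_j \le \prod_{A_X} q_j$ since the extra factors lie in $(0,1)$, and $h(B_Y)\le h(B_X)$ by the monotonicity just shown. Writing $d_X=c_c-h(B_X)\le d_Y=c_c-h(B_Y)\le 0$ and $0<\prod_{A_Y} q_j\le \prod_{A_X} q_j$, the sign bookkeeping gives $\big(\prod_{A_X}q_j\big)d_X \le \big(\prod_{A_Y}q_j\big)d_Y$, i.e. $h(c\mid X)\le h(c\mid Y)$, which is supermodularity of $h$ and hence of $g^{SET}$. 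The main obstacle is exactly this last step: two quantities move simultaneously (the cheaper-element product shrinks while the tail expectation $h(B)$ also shrinks), and both carry delicate signs, so the chain of inequalities only closes because the augmentation $C\ge c_i$ forces $c_c-h(B)\le 0$ throughout. I would treat that bound, rather than the algebra, as the crux of the proof.
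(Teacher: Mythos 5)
Your proof is correct and is essentially the paper's own argument: you use the same decomposition into a modular repair term plus the combined usage-plus-penalty term $h$, you arrive at the identical closed-form marginal $h(c\mid K)=p_c\bigl(\prod_{j\in A}q_j\bigr)\bigl(c_c-h(B)\bigr)$ (the paper derives it by direct algebraic expansion of $f(K\cup\{i\})=p_if(K^{<i};c_i)+q_if(K)$, you by probabilistic conditioning, but the paper states the very same probabilistic reading), and you close with the same two observations---the cheaper-element product shrinks and the tail value $h(B)$ is nonincreasing yet stays at least $c_c$ thanks to the augmentation $(1+\delta)c_N\geq c_i$---so the comparison with the paper reduces to presentation style, where your explicit sign bookkeeping in the last step is if anything more careful than the paper's.

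One aside in your writeup is false, though it is never used in the argument: the usage term alone is \emph{not} submodular. Take three elements $x\prec w\prec y$ with $c_x=c_w=1$, $c_y=100$, $p_y=0.99$, and no failure penalty; the marginal of $w$ given $\{y\}$ is $p_w(c_w-c_yp_y)=-98p_w$, while the marginal of $w$ given $\{x,y\}$ is $-98q_xp_w>-98p_w$, so the marginal \emph{increases} along set inclusion, which is the supermodular rather than the submodular direction. Without the penalty term, the tail value $u(B)$ can be either smaller or larger than $c_w$, so the sign of the marginal is uncontrolled and the usage term is neither submodular nor supermodular in general; you should delete or correct that remark (the correct statement of the crux, which you do make, is that the augmentation forces $c_c-h(B)\leq 0$ throughout).
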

\begin{proof}
We shall suppress the $SET$ superscript for brevity. Recall :
$$g(K) = \sum_{i\in\mathcal{L}'}q_ir_i + \sum_{i\in K}c_ip_i\prod_{j\in K,j\preceq i)}q_j + (1+\delta)c_N\prod_{i\in K}q_i.$$
Clearly, the first sum is a modular set function, so we need only show the supermodularity of the following:
$$f(K) := \sum_{i\in K}c_ip_i\prod_{j\in K,j\preceq i)}q_j + (1+\delta)c_N\prod_{i\in K}q_i,$$
as the positive linear combination of supermodular functions is supermodular.

To show supermodularity, for any subsets $X\subset Y \subset C'$, and any component $i\in \mathcal{L}'\backslash Y$, we must show that:
$$f(X\cup\{i\}) - f(X) \leq f(Y\cup\{i\}) - f(Y).$$

To simplify notation, we may use $K^{>i}$ and $K^{<i}$ to denote the partitions of a set $K$ with only the components greater than $l$ under the partial ordering, and the components considered lesser than $i$, respectively. We may also write:
$$f(i|K) := f(K\cup\{i\}) - f(K),$$
and $f(K;c)$ to be $f$ with $p$ replaced by $c$.

We first calculate $f(K\cup \{i\})$ as follows:
\begin{align*}
    f(K\cup\{i\}) &= \sum_{j\in K\cup\{i\}}\left( c_j p_j \prod_{k\in K\cup\{i\}, k<j}q_k\right) + (1+\delta)c_N\prod_{j\in K\cup\{i\}}q_j\\
    &= \sum_{j\in K, j < i}\left( c_j p_j \prod_{k\in K, k<j}q_k\right)\\
    &\quad+ c_ip_i\prod_{j < i}q_j
    + \sum_{j\in K, j > i}\left( c_j p_j \prod_{k\in K \cup \{i\}, k<j}q_k\right)\\
    &\quad+ (1+\delta)c_N \prod_{j\in K \cup \{i\}}q_j\\
    &= \sum_{j\in K, j < i}\left( c_j q_j \prod_{k\in K, k<j}q_k\right)\\
    &\quad+ c_ip_i\prod_{j < i}q_j
    + q_i\sum_{j\in K, j > i}\left( c_j p_j \prod_{k\in K, k<j}q_k\right)\\
    &\quad+ q_i(1+\delta)c_N \prod_{j\in K}q_j\\\vspace{1em}
    &= c_lx_1^l\prod_{k < l}(1 - x_1^k)\\
    &\quad+x_1^l\sum_{j\in K, j < l}\left( c_j x_1^j \prod_{k\in K, k<j}(1 - x_1^k)\right)\\
    &\quad+(1 - x_1^l)\left\{\sum_{j\in K}\left( c_j x_1^j \prod_{k\in K, k<j}(1 - x_1^i)\right) + p \prod_{i\in K}(1 - x_1^i)\right\}\\\vspace{1em}
    &= x_1^l\left\{c_l\prod_{k < l}(1 - x_1^k) + \sum_{i\in K, i < l}\left( c_i x_1^i \prod_{k\in K, k<j}(1 - p_k)\right)\right\}\\
    &\quad+ q_if(K)\\\vspace{1em}
    &= p_i f(K^{<i};c_i) + q_if(K).
\end{align*}
This makes intuitive sense. By including an extra component $i$, with probability $p_i$ you are at worst using component $i$ and ignoring all links worse than $i$, and with probability $q_i$ the link is unavailable and the system falls back onto the original links in $K$. Notably, this shows that $f$ is monotone. We then see that:
\begin{align*}
    f(i|K) &= f(K\cup\{i\}) - f(K)\\
    &= p_i\left(f(K^{<l};c_I) - f(K)\right).
\end{align*}
As the constant $p_i$ does not depend on $K$, we need only focus on $f(K^{<l};c_l) - f(K)$. We see:
\begin{align*}
    f(K^{<i};c_i) - f(K) &= c_i\prod_{k\in K, k < i}q_k + \sum_{j\in K, j < i}\left( c_j p_j \prod_{k\in K, k<j}q_k\right)\\
    &\quad-\left\{\sum_{j\in K}\left( c_j p_j \prod_{k\in K, k<j}q_j\right) + p \prod_{j\in K}q_j\right\}\\
    &= c_i\prod_{k\in K, k < i}q_k - \sum_{j\in K, j > i}\left( c_j p_j \prod_{k\in K, k<j}q_k\right) - p\prod_{j\in K}q_j\\
    &= \left\{\prod_{k\in K, k < i}q_k\right\}\left\{ c_i - \sum_{j\in K, j > i}c_j p_j \prod_{k\in K, i < k < j}q_k - p\prod_{j\in K, j > i}q_k\right\}\\
    &= \left\{\prod_{k\in K, k < i}q_k\right\}\left\{c_i - f(K^{>i})\right\}\\
    &=-\left\{\prod_{k\in K, k < i}q_k\right\}\left\{f(K^{>i}) - c_i\right\}.
\end{align*}

The magnitude of this expression decreases as we add more links to $K$ that are cheaper than $i$ by the first term, and decreases as we add more links more expensive than $i$ by the second term, as $f(K^{>i}) \geq c_i$. Therefore the function is supermodular. 
\end{proof}

Using these properties, we may now prove \autoref{proposition}.

\begin{proof}
    Firstly, assume we have no knapsack constraints. We assume that our bound must allow us to represent the following feasible solution:
    $$K = \left\{(i,n)\,|\,n\leq\max\left\{\llceil\left(\frac{1}{\ln q_i}\right)\ln\left\{-\frac{q_ir_i}{[(1+\delta)c_N - c_i]\ln q_i}\right\}\rrceil,\llceil \frac{\varepsilon}{\ln q_i}\rrceil\right\}\right\}.$$
    This solution is feasible according to the $\varepsilon$-constraint, and cannot be improved by adding more copies of component $i$ by \autoref{lemma:convex}. Extending this, for any sets $L \supset K$ and $L'$ obtained by adding one extra copy of component $i$, we have $g^{SET}(L') \geq g^{SET}(L)$ by supermodularity. As such, we need not consider solutions with more than $|K|$ copies of component $i$, so we obtain bound $M^{NO-KNAP}_i = \max\left\{\llceil -\frac{q_ir_i}{[(1+\delta)c_N - c_i]\ln q_i} \rrceil,\llceil \frac{\varepsilon}{\ln q_i}\rrceil\right\}$. Now adding back in knapsack constraints, we need not consider solutions that violate any knapsack constraint, so we strengthen our bound to $M_i = \min\{M_i^{NO-KNAP}, \min_j\{b_j/a_{ji}\}\}$, as desired.
\end{proof}
\newpage
\subsection{Algorithms}
\label{sec:supp3:alg}
Here, we provide the algorithms used to implement the APP methodology for BO-IDDMP.

\autoref{alg:sp1} takes as input a starting value $\varepsilon_{\min}$, an increment value $\Delta\varepsilon$, and the problem parameters, and returns a set of Pareto-optimal solutions to this problem. It begins by obtaining the two objective values and design solution for F-DOP, giving a lower-bound on $\varepsilon$ and providing the most extreme solution in terms of reliability. Then starting with $\varepsilon = \varepsilon_{\min}$, we iteratively solve $\varepsilon-\delta-$DOP-PC, store the resulting solution $(g^o, \ln g^f, x)$, and update $\varepsilon$ to $\ln g^f + \Delta\varepsilon$ , ensuring that the next solution is more reliable. We do this until the lower bound on $\epsilon$ is reached, at which point we return all objective values and solutions found.

\begin{algorithm}[h]
\caption{SP1} \label{alg:sp1}
\begin{algorithmic}
\Require $\varepsilon_{\min}, \Delta\varepsilon,\mathcal{L} =  (N, \alpha, \tau, c, r, A, b)$
\State StaticSolutions $\gets [\,]$
\State $(g^o_f, \ln g^f_f, x_f) \gets$ F-DOP$(\mathcal{L})$
\State $\text{StaticSolutions}\gets [(g^o_f, \ln g^f_f, x_f)]$
\State $\varepsilon \gets \varepsilon_{\min}$
\While{$\varepsilon \leq \ln g^f_f$}
    \State $(g^o, \ln g^f, x) \gets \varepsilon-\delta-\text{DOP-PC}(\mathcal{L}, \varepsilon)$
    \State push(StaticSolutions, $(g^o, \ln g^f, x)$)
    \State $\varepsilon \gets \ln g^f + \Delta\varepsilon$
\EndWhile

\Return $\text{StaticSolutions}$
\end{algorithmic}
\end{algorithm}

\autoref{alg:sp2} takes as input a design $x$, a label for that design $i$, a minimum LFR $\ln g^f_{static}$, a minimum penalty $p_{\min}$, a multiplicative increment $\Delta p$, and the problem parameters. It returns a set of Pareto-optimal solutions. Starting with $p = p_{\min}$, it solves $p-$DMP for the design $x$ and stores the tuple of the objective values, design label, and penalty used, $(g^o, \ln g^f, i, p)$. It then performs the update $p\gets p\times\Delta p$, and loops back around. It does this until $\ln g^f \neq \ln g^f_{static}$, at which point the penalty $p$ has been increased far enough that the fully-active maintenance policy has been recovered. It then returns all solutions found.

\begin{algorithm}[h]
\caption{SP2} \label{alg:sp2}
\begin{algorithmic}
\Require $x, i, \ln g^f_{static}, p_{\min}, \Delta p, \mathcal{L} =  (N, \alpha, \tau, c, r, A, b)$
\State DynamicSolutions $\gets [\,]$
\State $p\gets p_{\min}$
\State $(g^o, g^f) \gets (0,0)$
\While{$\ln g^f \neq \ln g^f_{static}$} 
    \State $(g^o, g^f) \gets p$-DMP$(\mathcal{L}, x, p)$
    \State push(DynamicSolutions, $(g^o, \ln g^f, i, p)$)
    \State $p \gets p\times\Delta p$
\EndWhile

\Return DynamicSolutions
\end{algorithmic}
\end{algorithm}

\autoref{alg:app} is the full algorithm for applying APP to BO-IDDMP. It takes as input $\varepsilon_{\min}, \Delta\varepsilon, p_{\min}, \Delta p, \delta$, and the problem parameters. It constructs an approximate set of Pareto-optimal solutions to the problem. Alongside the separate functions for SP1 and SP2$(x)$, it makes use of two helper functions: NonNestedDesigns and NonDomSolutions. NonNestedDesigns takes as input a list of three-tuples where the first two entries contain objective values, and the third entry contains decision variables representing a design. A design $x_1$ is considered ``nested'' in another design $x_2$ if the $x_2$ contains at least as many copies of each type of component as $x_1$, and at least one extra component. The purpose of eliminating such designs is that any dynamic policy on design $x_1$ can be emulated on $x_2$ by simply ignoring the extra components. As such, obtaining a Pareto-front of policies for each design would result in repeated solutions and wasted computational time. NonNestedSolutions therefore checks all designs against all other designs to check for nesting, and returns the sublist that only contains solutions which are not nested in other solutions. NonDomSolutions takes a list of tuples of varying sizes where the first two components of the tuple describes the two objectives. It iterates over all solutions and checks them against all other solutions to check for domination of one solution over another. It then returns the sublist of non-dominated tuples. 

\begin{algorithm}[h]
\caption{APP} \label{alg:app}
\begin{algorithmic}
\Require $\varepsilon_{\min}, \Delta\varepsilon, p_{\min}, \Delta p, \delta, \mathcal{L} =  (N, \alpha, \tau, c, r, A, b)$
\State StaticSolutions$\gets \text{SP1}(\varepsilon_{\min}, \Delta\varepsilon,\mathcal{L})$
\State $\text{NonNestedDesigns}\gets$NonNestedDesigns(StaticSolutions)
\State DynamicSolutions $\gets [\,]$
\For{$i = 1,...,$length(NonNestedDesigns)}
    \State $(g^o_{static}, \ln g^f_{static}, x) \gets$ NonNestedDesigns$[i]$
    \State NewDynamicSolutions$\gets \text{SP2}(x, i, \ln g^f_{static}, p_{\min}, \Delta p, \delta, \mathcal{L})$
    \State append(DynamicSolutions, NewDynamicSolutions)
\EndFor

\State Population $\gets (\text{StaticSolutions} \backslash \text{NonNestedDesigns}) \cup \text{DynamicSolutions}$
\State NonDomSolutions $\gets$ NonDomSolutions(Population)

\end{algorithmic}
\end{algorithm}

Both NonNestedDesigns and NonDomSolutions use very similar logic to check for domination or nesting, so for the sake of brevity we only show the algorithm for NonDomSolutions in \autoref{alg:nondomsol}.

\begin{algorithm}[h]
\caption{NonDomSolutions}  \label{alg:nondomsol}
\begin{algorithmic}
\Require Population  
\For{sol $\in$ Population}
    \State dominated $\gets$ False
    \For{$\text{sol}'\in\text{Population}\backslash\{\text{sol}\}$}
        \If{$\text{sol}'[1] \leq \text{sol}[1] \text{ and } \text{sol}'[2] \leq \text{sol}[2] \text{ and } \left( \text{sol}'[1] < \text{sol}[1] \text{ or } \text{sol}'[2] < \text{sol}[2]\right)$}
            \State dominated $\gets$ True
            \State Break
        \EndIf
    \EndFor
    \If{not dominated}
        \State push(NonDomSolutions, sol)
    \EndIf 
\EndFor
\end{algorithmic}    
\end{algorithm}

\section{Section 4 Supplement}

\subsection{Results Table for Section 4.2}
\begin{table}[tbp]
    \centering
    \begin{tabular}{cccccccc}\toprule
    & & \multicolumn{2}{c}{LP}& \multicolumn{2}{c}{Heuristic} & \multicolumn{2}{c}{DOP}\\ \cmidrule(lr){3-4}\cmidrule(lr){5-6}\cmidrule(lr){7-8}
    Comp. Set & Budget & Time    & Solutions & Time & Solutions & Time  & Solutions \\
1         & 6      & 0.38    & 6         & 0.95 & 8         & 0.011 & 4         \\
1         & 8      & 0.86    & 9         & 0.99 & 15        & 0.032 & 5         \\
1         & 10     & 3.09    & 14        & 1.3  & 24        & 0.024 & 6         \\
1         & 12     & 14.46   & 22        & 1.37 & 34        & 0.038 & 7         \\
1         & 14     & 63.84   & 31        & 1.81 & 54        & 0.049 & 8(-1)         \\
1         & 16     & 294.75  & 38        & 2.48 & 86        & 0.075 & 9(-1)         \\
2         & 24     & 0.38    & 8         & 0.32 & 7         & 0.02  & 4         \\
2         & 32     & 0.51    & 12        & 0.33 & 11        & 0.028 & 5         \\
2         & 40     & 1.72    & 19        & 0.34 & 17        & 0.034 & 6         \\
2         & 48     & 6.47    & 27        & 0.35 & 26        & 0.023 & 7         \\
2         & 56     & 26.82   & 36        & 0.36 & 32        & 0.026 & 8(-3)         \\
2         & 64     & 134.35  & 43        & 0.41 & 41        & 0.042 & 9(-4)         \\
3         & 12     & 0.34    & 8         & 0.33 & 8         & 0.008 & 4         \\
3         & 16     & 0.53    & 13        & 0.34 & 11        & 0.011 & 5         \\
3         & 20     & 1.63    & 17        & 0.36 & 19        & 0.02  & 6         \\
3         & 24     & 6.23    & 26        & 0.37 & 24        & 0.027 & 7(-2)         \\
3         & 28     & 34.84   & 34        & 0.41 & 34        & 0.053 & 8(-3)         \\
3         & 32     & 165.31  & 44        & 0.42 & 41        & 0.047 & 9(-4)         \\
4         & 12     & 0.31    & 4         & 0.32 & 4         & 0.011 & 3         \\
4         & 16     & 0.41    & 8         & 0.33 & 7         & 0.014 & 4         \\
4         & 20     & 0.87    & 12        & 0.32 & 11        & 0.012 & 5         \\
4         & 24     & 4.76    & 30        & 0.38 & 31        & 0.022 & 6         \\
4         & 28     & 57.69   & 45        & 0.9  & 68        & 0.038 & 7(-1)         \\
4         & 32     & 207.76  & 42        & 1.57 & 93        & 0.054 & 8(-1)         \\
5         & 9      & 0.35    & 7         & 0.64 & 7         & 0.01  & 4         \\
5         & 12     & 1.05    & 12        & 0.95 & 10(-1)        & 0.013 & 5         \\
5         & 15     & 2.34    & 14        & 1.01 & 36        & 0.018 & 6         \\
5         & 18     & 14.56   & 27        & 1.27 & 52        & 0.044 & 7         \\
5         & 21     & 97.07   & 39        & 2.17 & 65        & 0.035 & 8         \\
5         & 24     & 382.46  & 54        & 5.44 & 109(-6)       & 0.226 & 9         \\
6         & 12     & 0.37    & 5         & 0.64 & 6         & 0.009 & 4         \\
6         & 16     & 0.81    & 9         & 0.64 & 9         & 0.017 & 5         \\
6         & 20     & 4.08    & 14        & 0.68 & 12        & 0.033 & 6         \\
6         & 24     & 66.83   & 39        & 0.73 & 32        & 0.025 & 7         \\
6         & 28     & 4624.9  & 31        & 1.26 & 60        & 0.044 & 8         \\
6         & 32     & 7196.05 & 0         & 1.87 & 62        & 0.054 & 9(-3)         \\
7         & 21     & 0.33    & 5         & 0.63 & 6         & 0.007 & 4         \\
7         & 28     & 0.45    & 9         & 0.64 & 10        & 0.009 & 5         \\
7         & 35     & 1.88    & 20        & 0.67 & 22        & 0.014 & 6         \\
7         & 42     & 32.8    & 39        & 1.23 & 63        & 0.026 & 7         \\
7         & 49     & 145.97  & 39        & 2.33 & 83        & 0.043 & 8         \\
7         & 56     & 385.03  & 31        & 6.3  & 83        & 0.078 & 9         \\

    \end{tabular}
    \caption{Comparison of runtimes and numbers of solutions found for each method.}
    \label{tab:runtimes1}
\end{table}

\begin{table}[tbp]
    \centering
    \begin{tabular}{cccccccc}\toprule
    & & \multicolumn{2}{c}{LP}& \multicolumn{2}{c}{Heuristic} & \multicolumn{2}{c}{DOP}\\ \cmidrule(lr){3-4}\cmidrule(lr){5-6}\cmidrule(lr){7-8}
    Comp. Set & Budget & Time    & Solutions & Time & Solutions & Time  & Solutions \\
8         & 12     & 0.32    & 5         & 0.66 & 5         & 0.008 & 4         \\
8         & 16     & 0.42    & 10        & 0.67 & 10        & 0.009 & 5         \\
8         & 20     & 1.15    & 11        & 1.08 & 19        & 0.019 & 6         \\
8         & 24     & 2.41    & 15        & 1.09 & 24        & 0.02  & 7         \\
8         & 28     & 12.97   & 31        & 1.33 & 42        & 0.027 & 8         \\
8         & 32     & 77.89   & 36        & 2.35 & 78        & 0.06  & 9(-1)\\
9         & 21     & 0.36    & 5         & 0.64 & 5         & 0.011 & 4         \\
9         & 28     & 0.57    & 8         & 0.33 & 8         & 0.01  & 4         \\
9         & 35     & 4.98    & 20        & 0.37 & 19        & 0.016 & 5         \\
9         & 42     & 254.48  & 40        & 0.89 & 51        & 0.04  & 6         \\
9         & 49     & 2938.78 & 36        & 1.92 & 60        & 0.035 & 7         \\
9         & 56     & 7196.93 & 0         & 0.39 & 22        & 0.055 & 7(-1)\\
10        & 15     & 0.34    & 5         & 0.64 & 6         & 0.007 & 4         \\
10        & 20     & 0.46    & 9         & 0.65 & 9         & 0.01  & 5         \\
10        & 25     & 1.23    & 14        & 0.65 & 13        & 0.032 & 6         \\
10        & 30     & 4.79    & 19        & 0.35 & 17        & 0.02  & 6         \\
10        & 35     & 39.9    & 46        & 0.47 & 49        & 0.03  & 7(-2)\\
10        & 40     & 916.98  & 88        & 1.28 & 113       & 0.034 & 8(-3)\\
11        & 15     & 0.34    & 5         & 0.64 & 6         & 0.008 & 4         \\
11        & 20     & 0.45    & 9         & 0.66 & 9         & 0.01  & 5         \\
11        & 25     & 1.29    & 14        & 0.68 & 15        & 0.019 & 6         \\
11        & 30     & 5.44    & 20        & 0.69 & 18        & 0.022 & 7         \\
11        & 35     & 56.36   & 46        & 0.81 & 49        & 0.032 & 8         \\
11        & 40     & 1160.02 & 79        & 1.74 & 106       & 0.054 & 9(-1)\\
12        & 12     & 0.33    & 5         & 0.69 & 6         & 0.01  & 4         \\
12        & 16     & 0.77    & 7         & 1.16 & 10        & 0.027 & 5         \\
12        & 20     & 2.8     & 12        & 1.25 & 19        & 0.029 & 6         \\
12        & 24     & 23.32   & 15        & 1.71 & 38        & 0.096 & 7         \\
12        & 28     & 94.76   & 17        & 2.97 & 37        & 0.39  & 8         \\
12        & 32     & 524.15  & 33        & 5.53 & 61        & 0.674 & 9         \\
13        & 15     & 0.38    & 8         & 0.4  & 8         & 0.009 & 4         \\
13        & 20     & 0.76    & 14        & 0.35 & 11        & 0.012 & 5         \\
13        & 25     & 2.27    & 20        & 0.36 & 17        & 0.03  & 6         \\
13        & 30     & 7.99    & 21        & 0.41 & 22        & 0.03  & 7(-1)\\
13        & 35     & 79.29   & 35        & 0.46 & 30        & 0.037 & 8(-1)\\
13        & 40     & 615.59  & 40        & 0.44 & 36        & 0.043 & 9(-1)\\
14        & 18     & 0.35    & 4         & 0.32 & 4         & 0.007 & 3         \\
14        & 24     & 0.94    & 10        & 0.32 & 11        & 0.011 & 4         \\
14        & 30     & 6.0     & 13        & 0.64 & 20        & 0.018 & 5         \\
14        & 36     & 117.9   & 18        & 0.32 & 11        & 0.026 & 5         \\
14        & 42     & 3410.58 & 34        & 0.37 & 29        & 0.028 & 6         \\
14        & 48     & 5974.65 & 6         & 0.86 & 57        & 0.05  & 7
    \end{tabular}
\caption{Comparison of runtimes and numbers of solutions found for each method.}
\end{table}
In the Heuristic solutions column, $(-x)$ indicates that $x$ of the solutions found were dominated by an exact solution. In the DOP solutions column, $(-x)$ indicates that $x$ solutions were dominated by an APP solution, demonstrating where a dynamic policies not only provided a more complete Pareto front, but also dominated an always-maintained design solution across both objectives.

\newpage
\subsection{Additional Plots for Section 4.2}

\begin{figure}[h]
    \centering
    \includegraphics[width=0.9\linewidth]{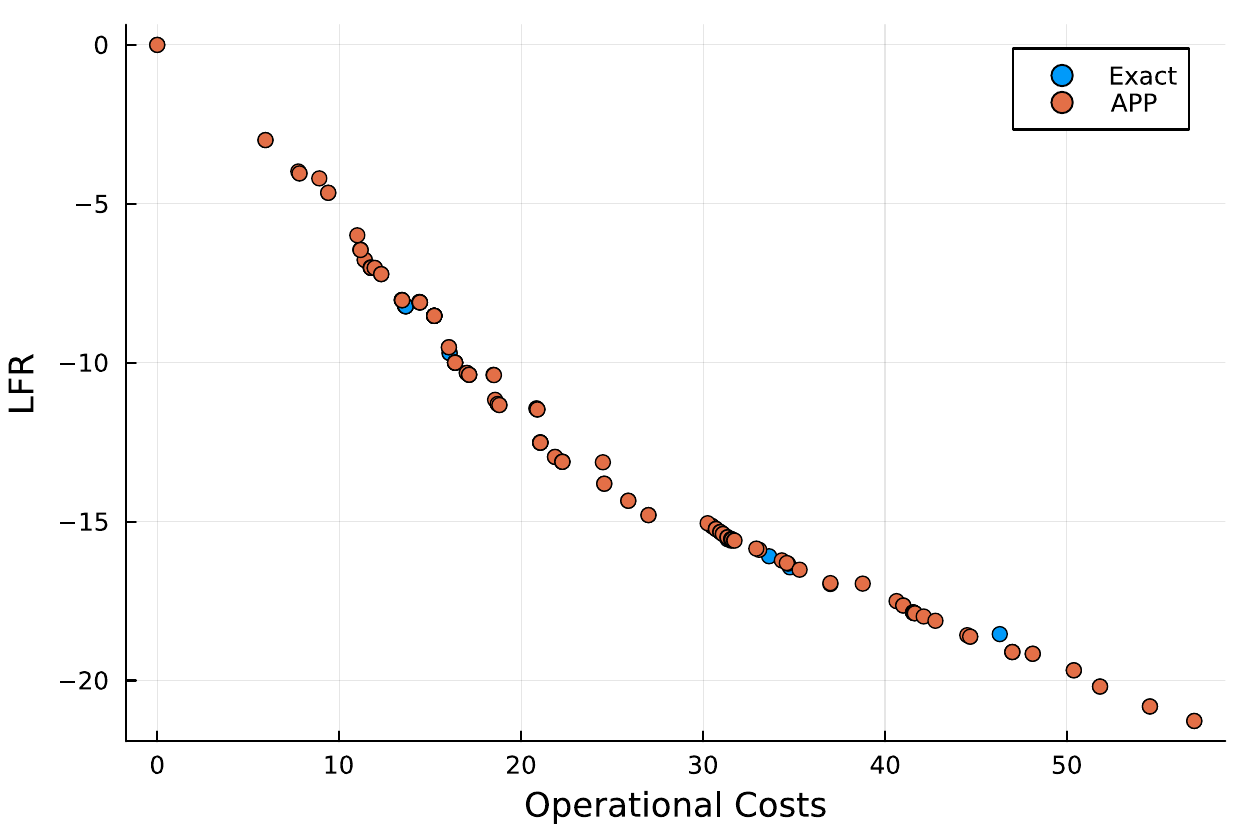}
    \caption{Pareto Front for Instance (5,24)}
    
\end{figure}

\begin{figure}
    \centering
    \includegraphics[width=0.9\linewidth]{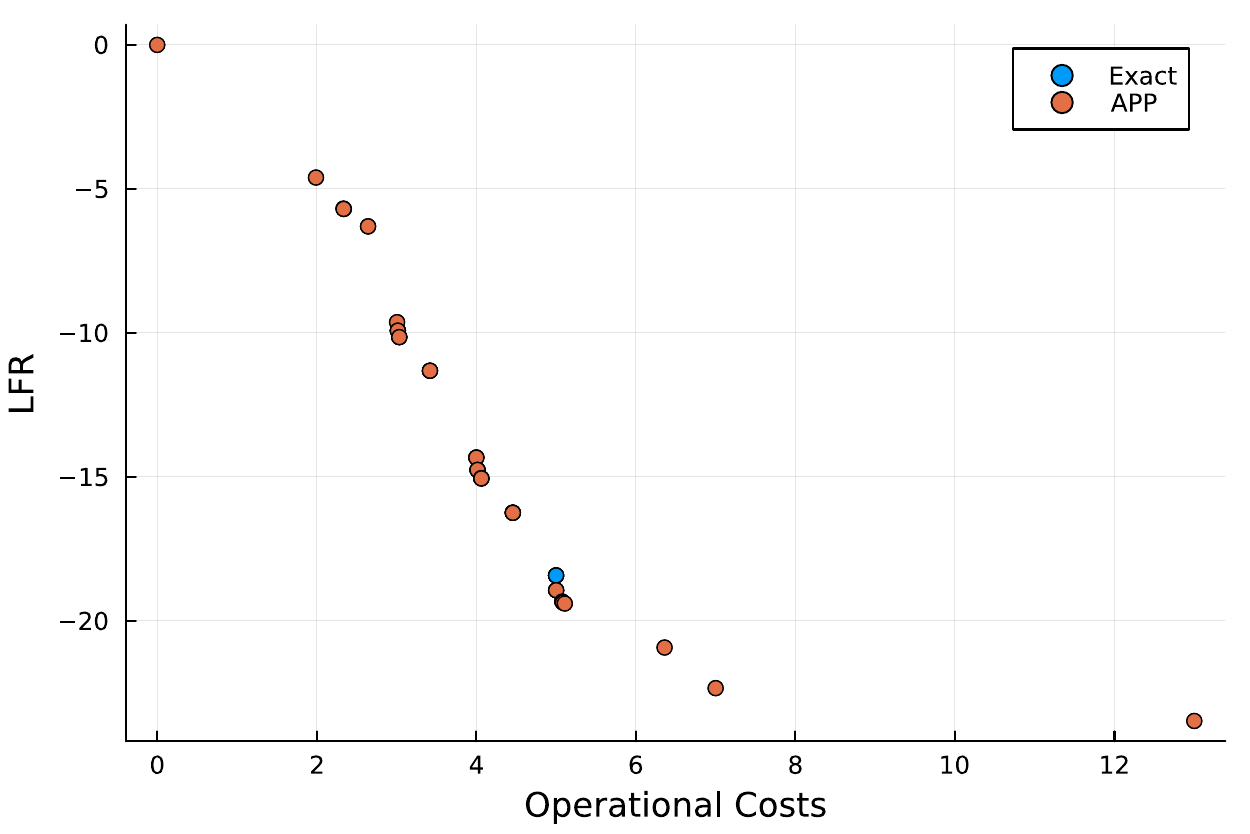}
    \caption{Pareto Front for Instance (6,24)}
    
\end{figure}
\begin{figure}
    \centering
    \includegraphics[width=0.9\linewidth]{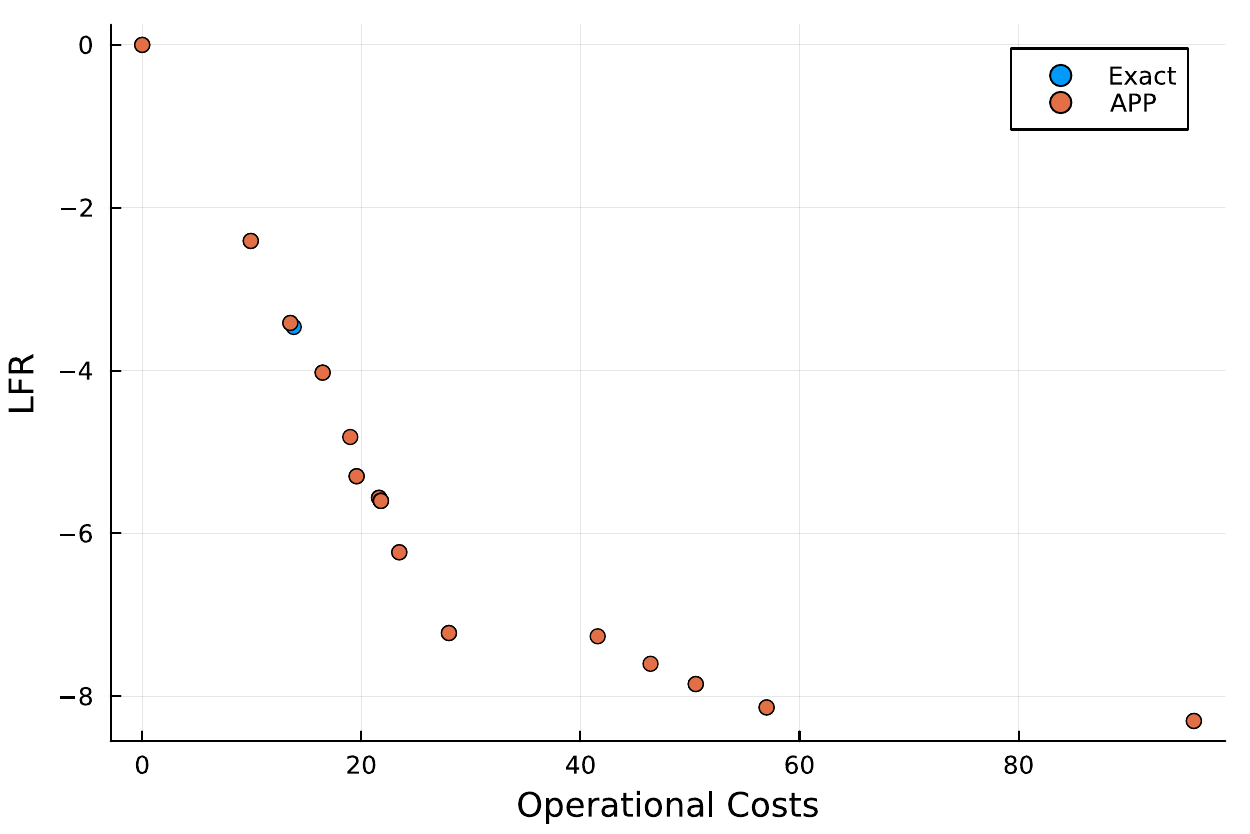}
    \caption{Pareto Front for Instance (8,20)}
    
\end{figure}
\begin{figure}
    \centering
    \includegraphics[width=0.9\linewidth]{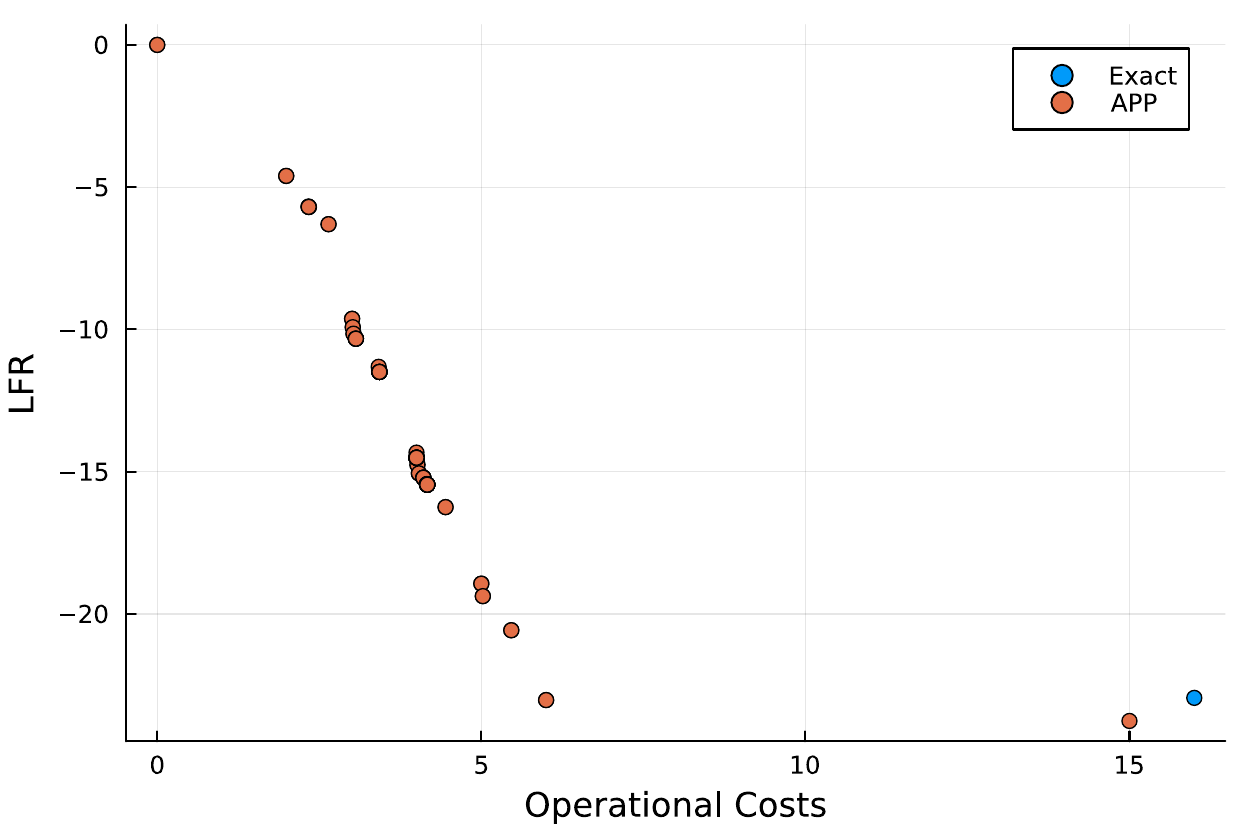}
    \caption{Pareto Front for Instance (9,49)}
    
\end{figure}\begin{figure}
    \centering
    \includegraphics[width=0.9\linewidth]{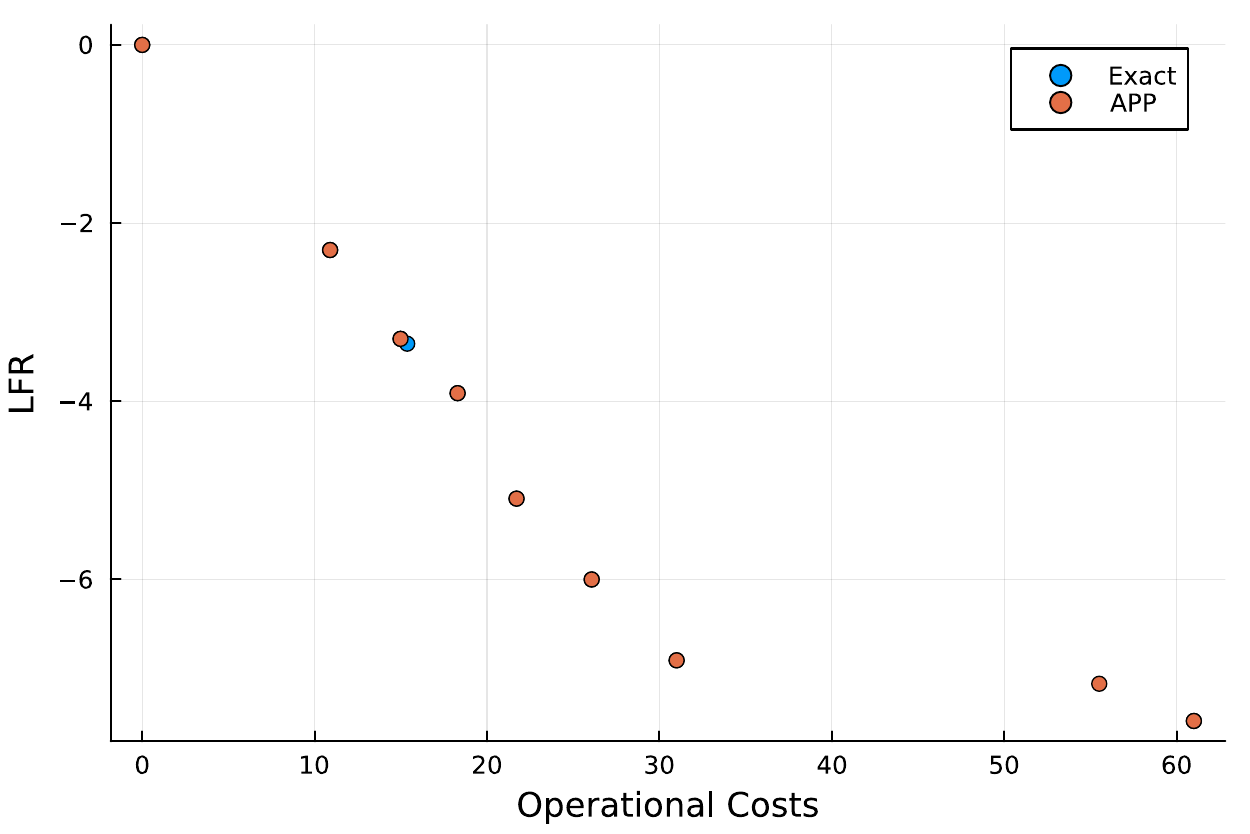}
    \caption{Pareto Front for Instance (10,20)}
    
\end{figure}
\begin{figure}
    \centering
    \includegraphics[width=0.9\linewidth]{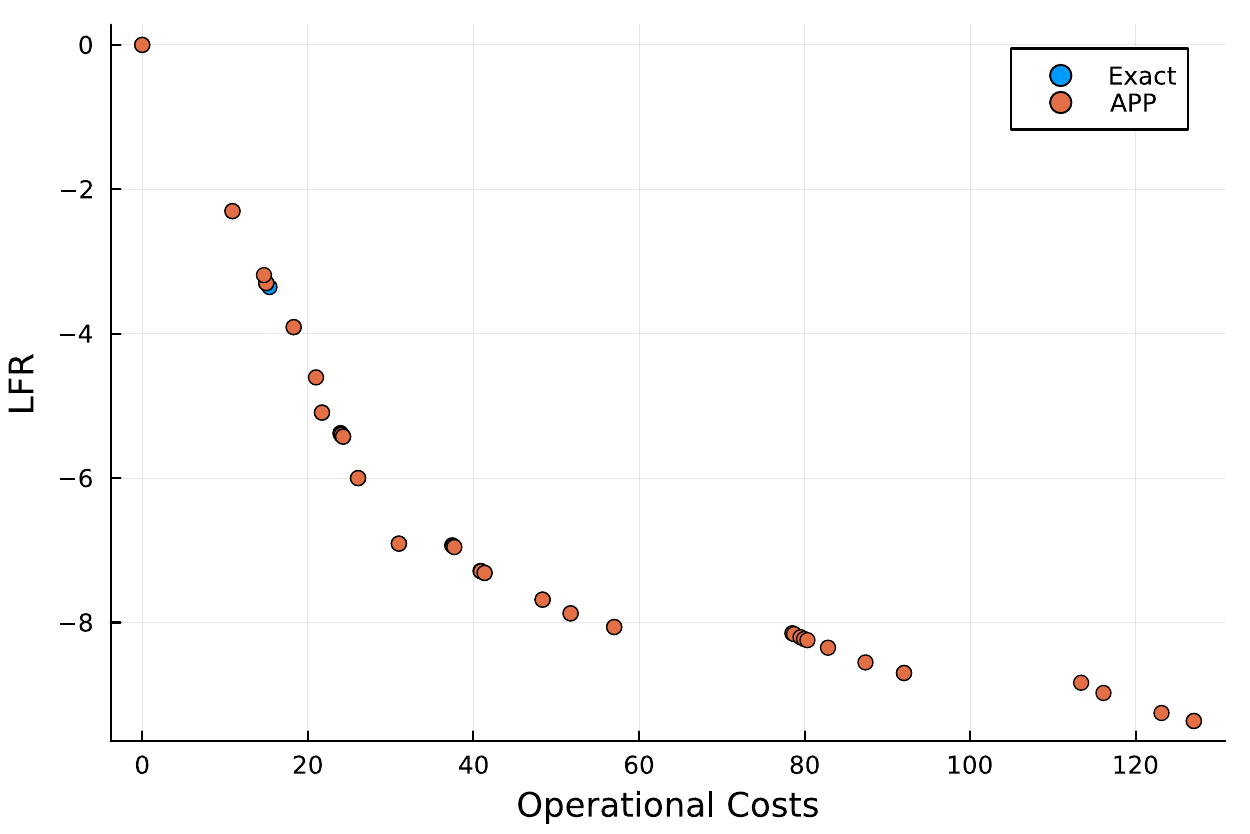}
    \caption{Pareto Front for Instance (12,24)}
    
\end{figure}
\begin{figure}
    \centering
    \includegraphics[width=0.9\linewidth]{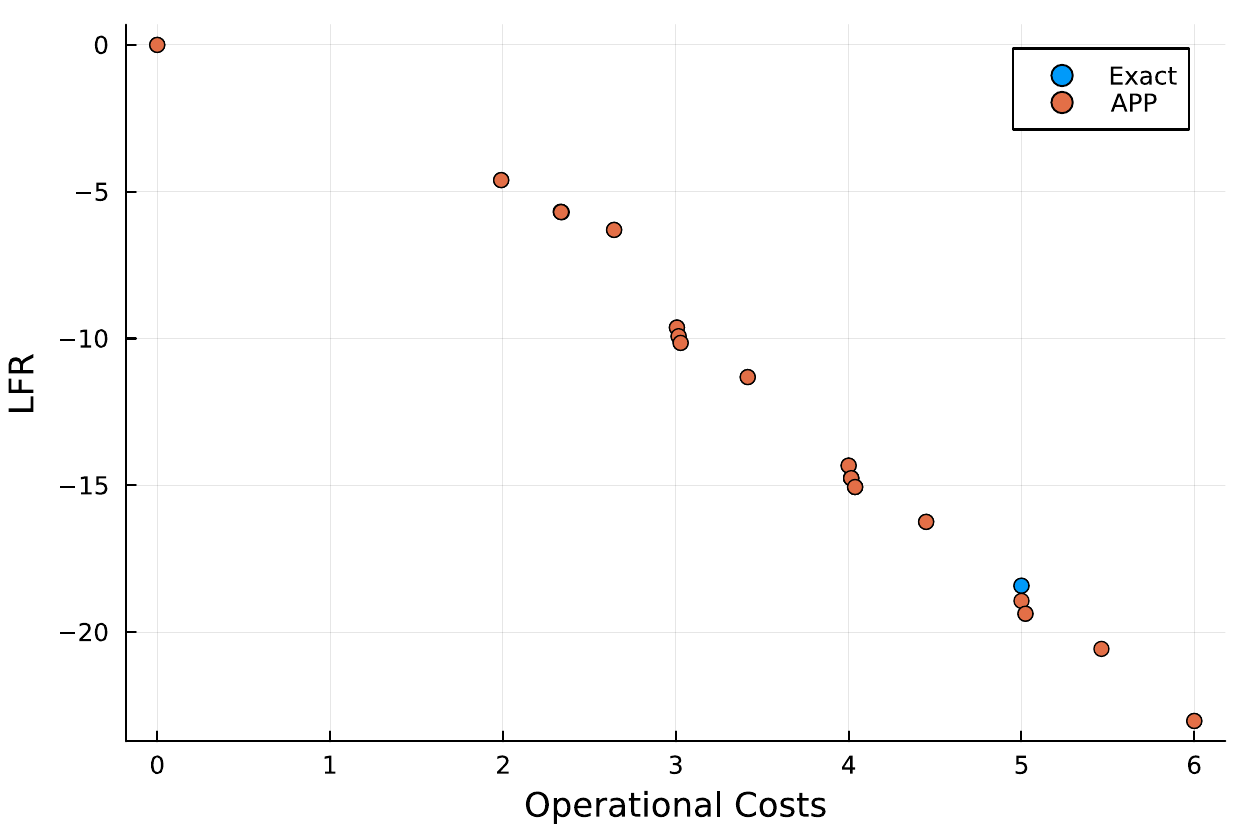}
    \caption{Pareto Front for Instance (13,25)}
    
\end{figure}
\begin{figure}
    \centering
    \includegraphics[width=0.9\linewidth]{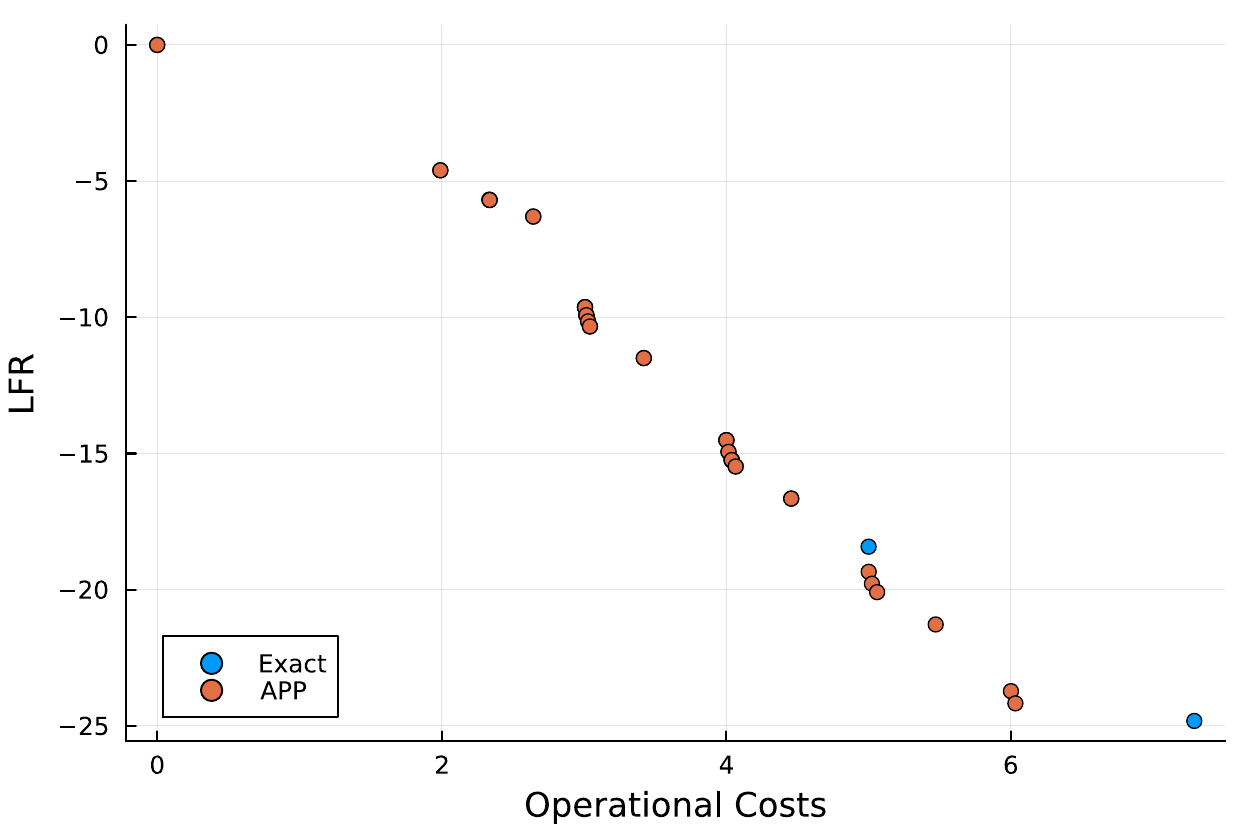}
    \caption{Pareto Front for Instance (13,30)}
    
\end{figure}
\begin{figure}
    \centering
    \includegraphics[width=0.9\linewidth]{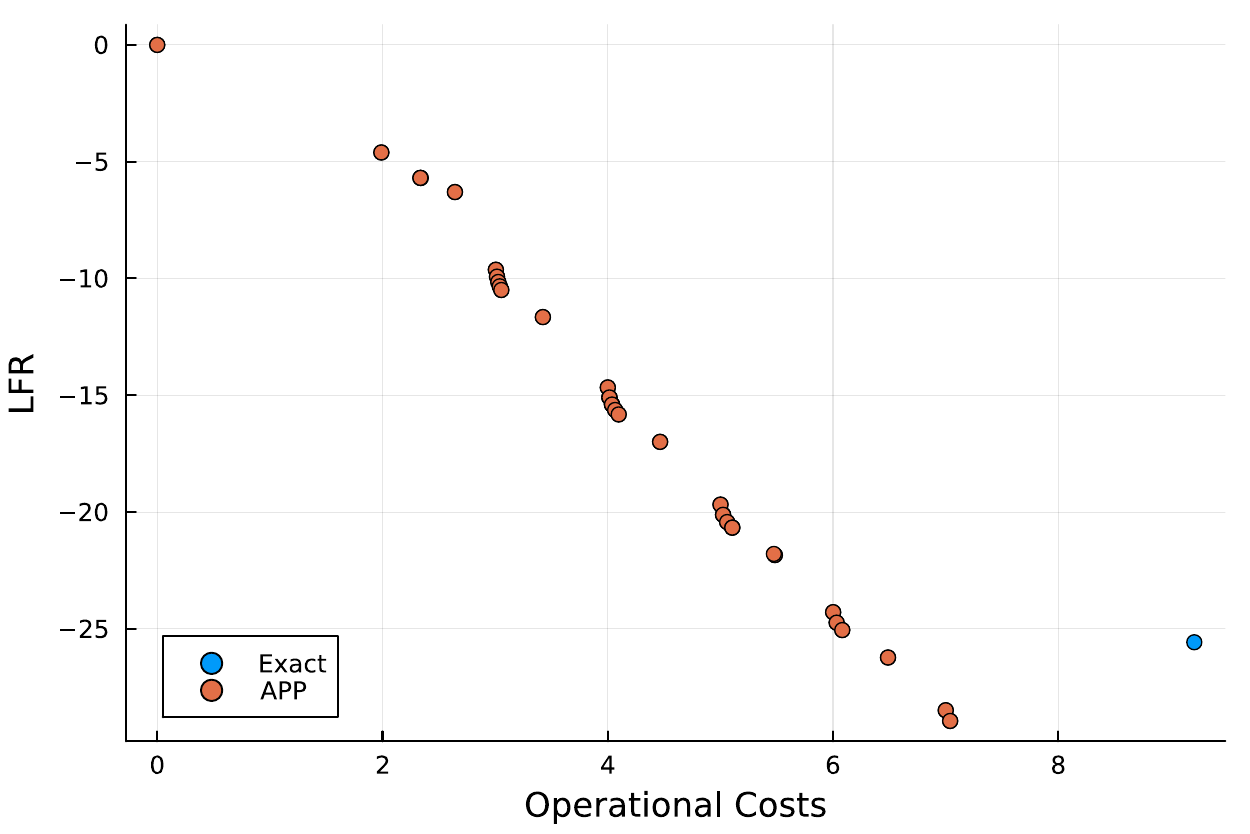}
    \caption{Pareto Front for Instance (13,35)}
    
\end{figure}
\begin{figure}
    \centering
    \includegraphics[width=0.9\linewidth]{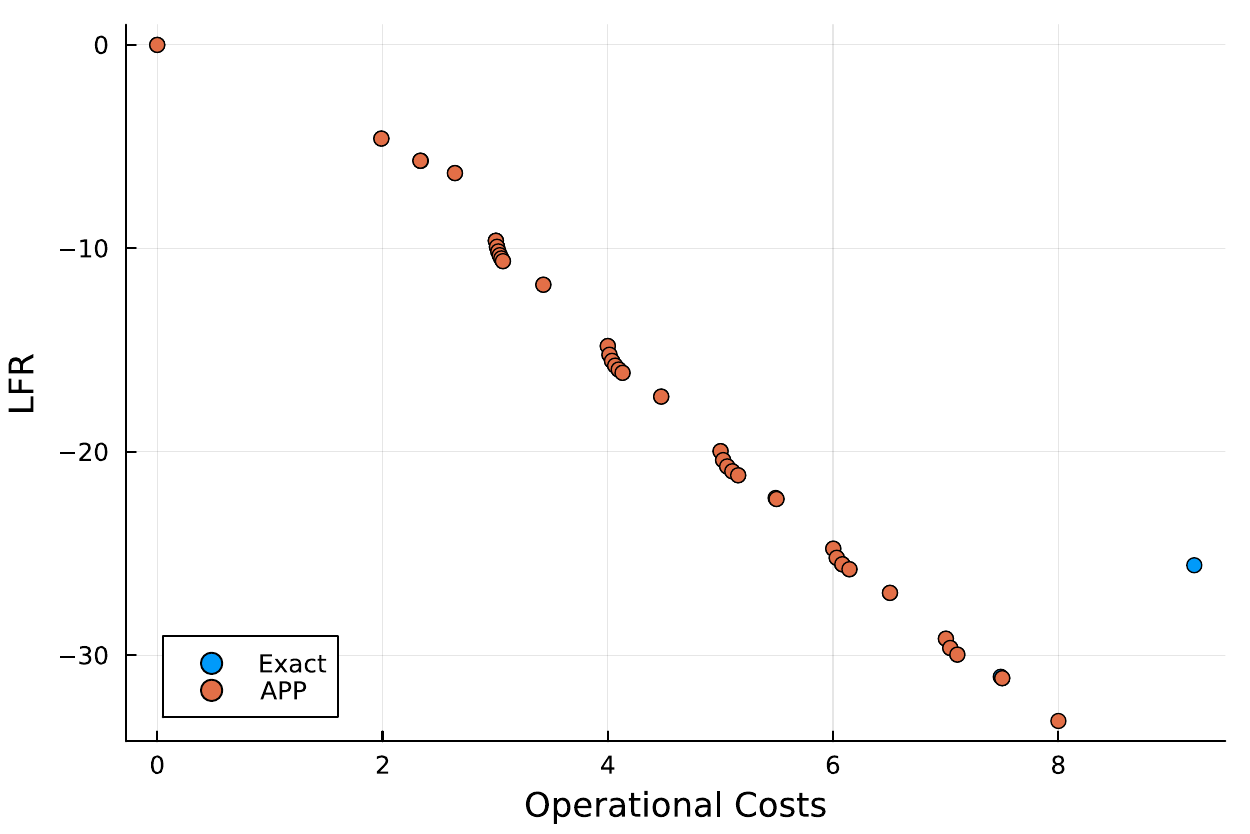}
    \caption{Pareto Front for Instance (13,40)}
    
\end{figure}
\begin{figure}
    \centering
    \includegraphics[width=0.9\linewidth]{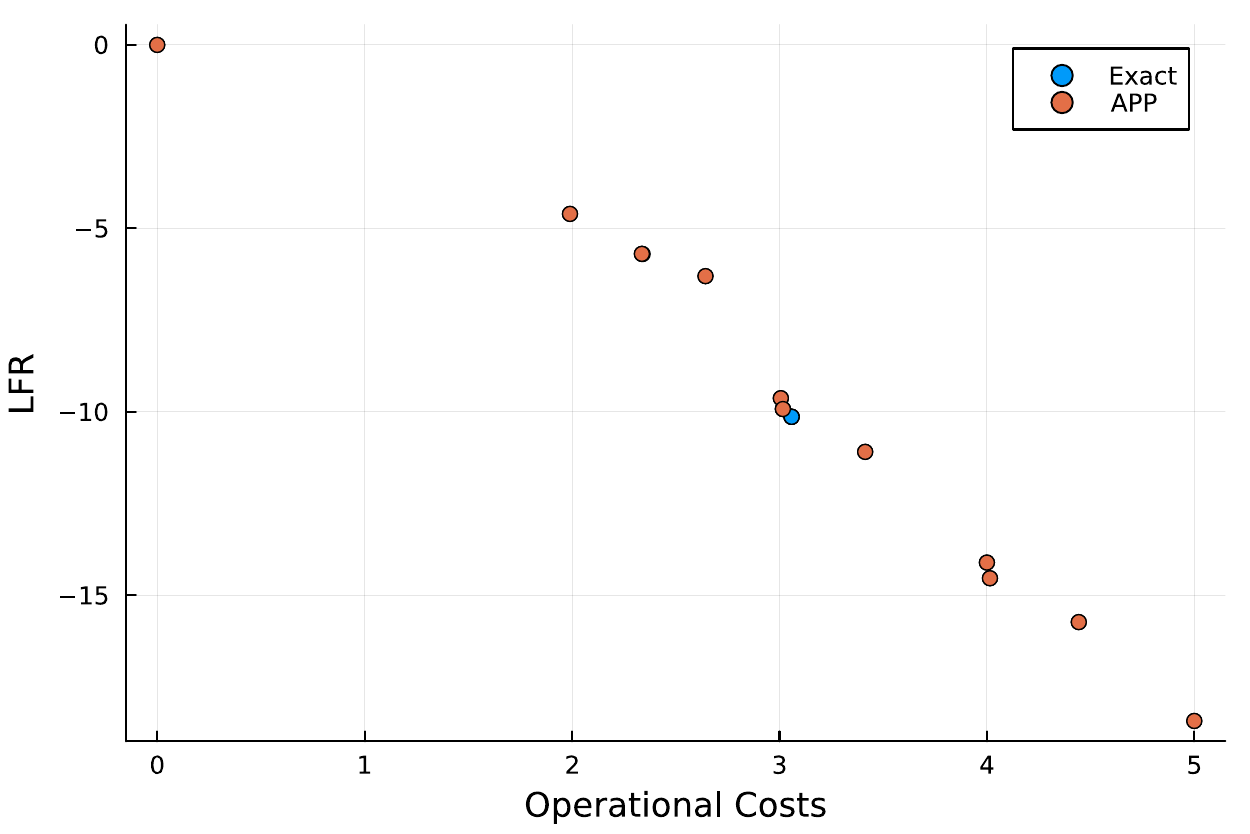}
    \caption{Pareto Front for Instance (14,36)}
    
\end{figure}
\begin{figure}
    \centering
    \includegraphics[width=0.9\linewidth]{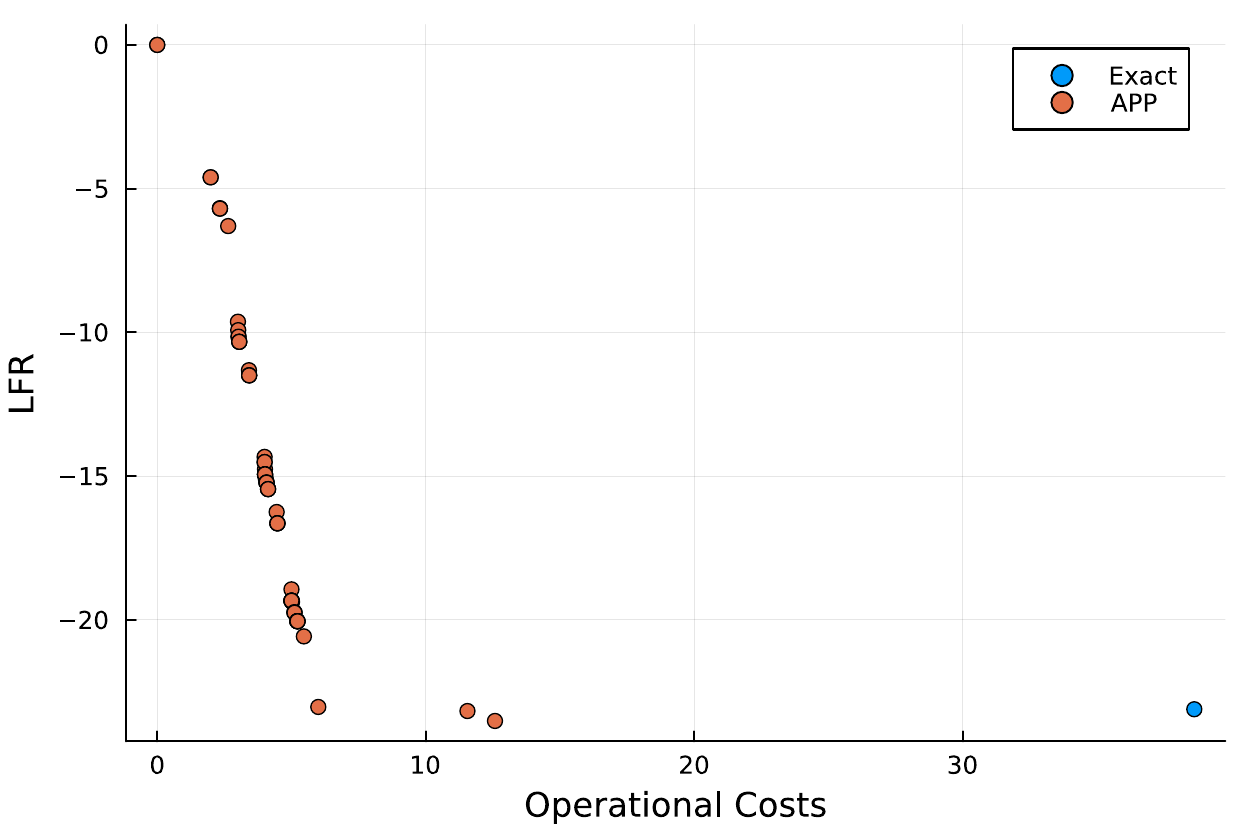}
    \caption{Pareto Front for Instance (14,48)}
\end{figure}

\newpage 
\subsection{Effect of event rate scaling}
\label{sec:study:rates}

\begin{sidewaysfigure}[tbp]
\centering
\caption[]{Comparison of Pareto fronts when varying event rates.}
\begin{subfigure}{.33\textwidth}
    \centering
    \includegraphics[width=\textwidth]{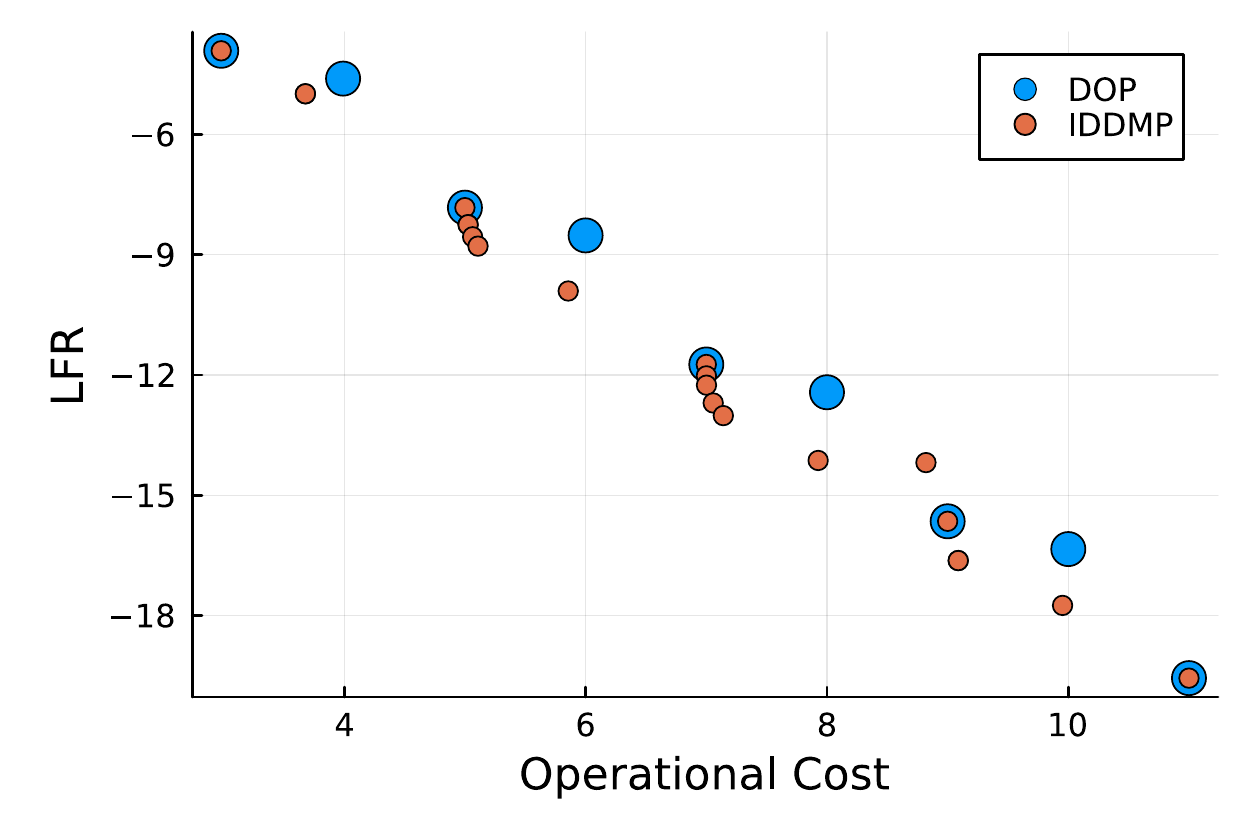}
    \subcaption{$1.0,1.0$}
\end{subfigure}%
\begin{subfigure}{.33\textwidth}
    \centering
    \includegraphics[width=\textwidth]{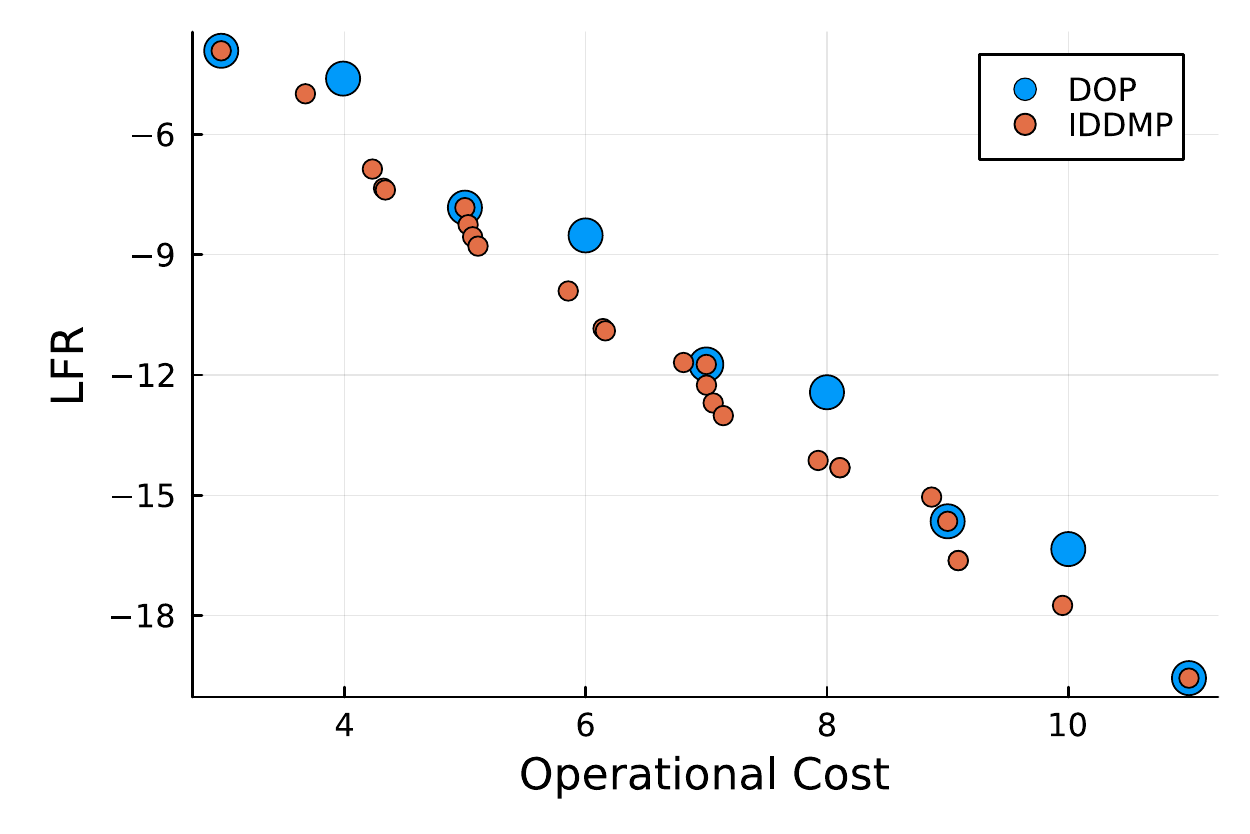}
    \subcaption{$1.0,5.0$}
\end{subfigure}%
\begin{subfigure}{.33\textwidth}
    \centering
    \includegraphics[width=\textwidth]{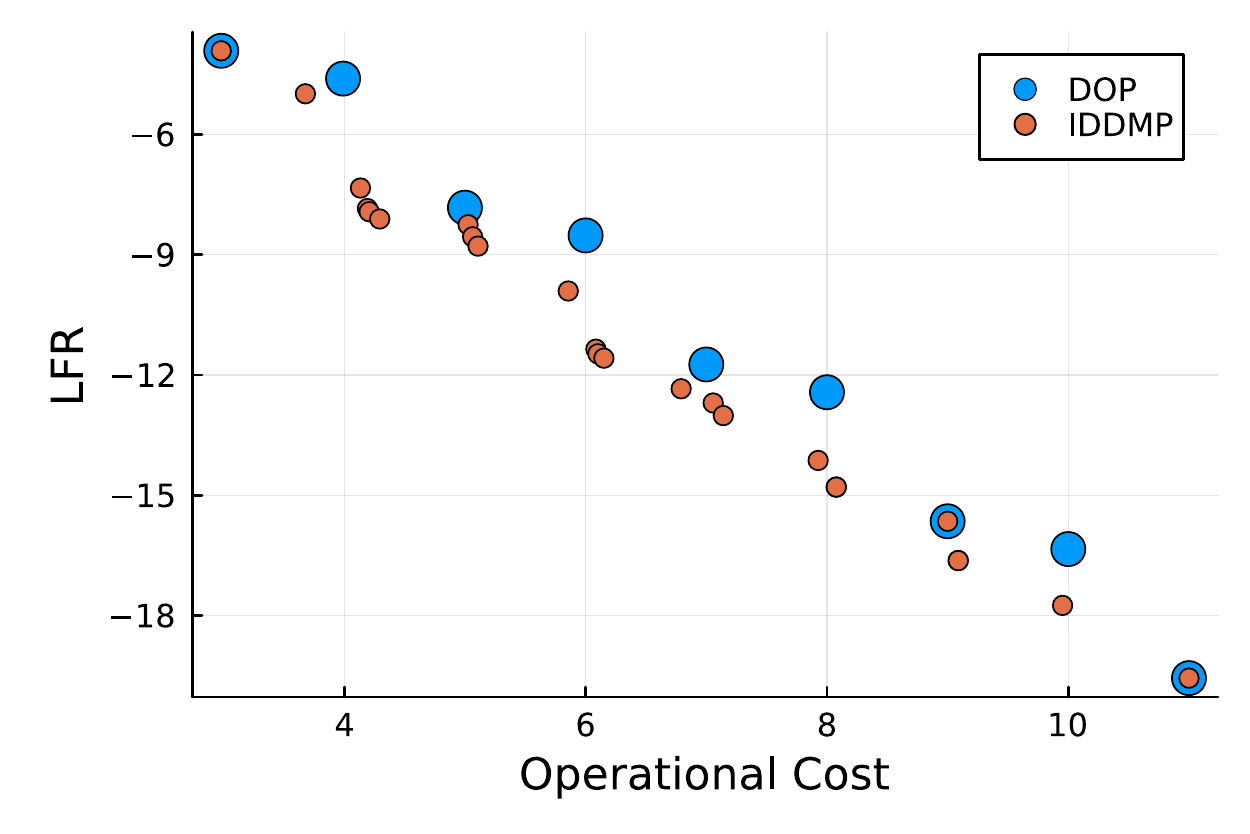}
    \subcaption{$1.0,10.0$}
\end{subfigure}
\begin{subfigure}{.33\textwidth}
    \centering
    \includegraphics[width=\textwidth]{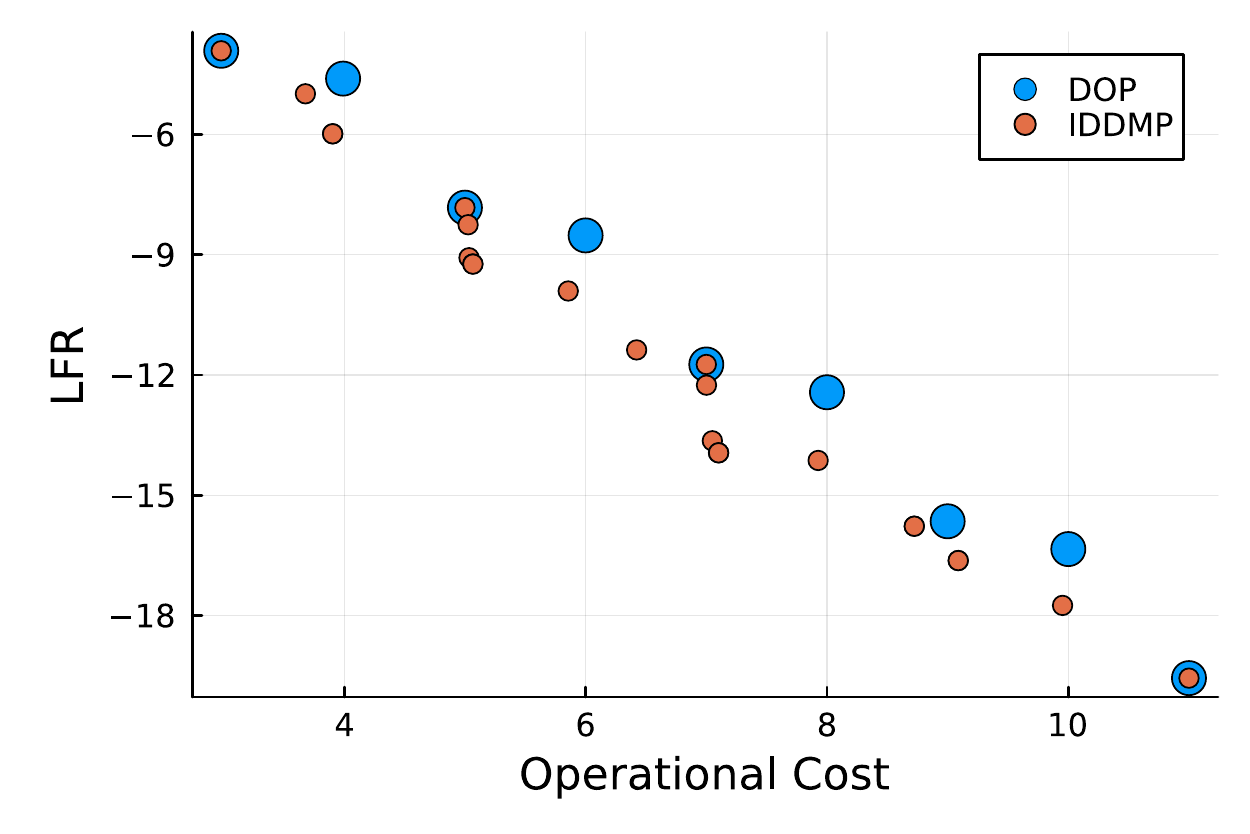}
    \subcaption{$5.0,1.0$}
\end{subfigure}%
\begin{subfigure}{.33\textwidth}
    \raggedleft
    \includegraphics[width=\textwidth]{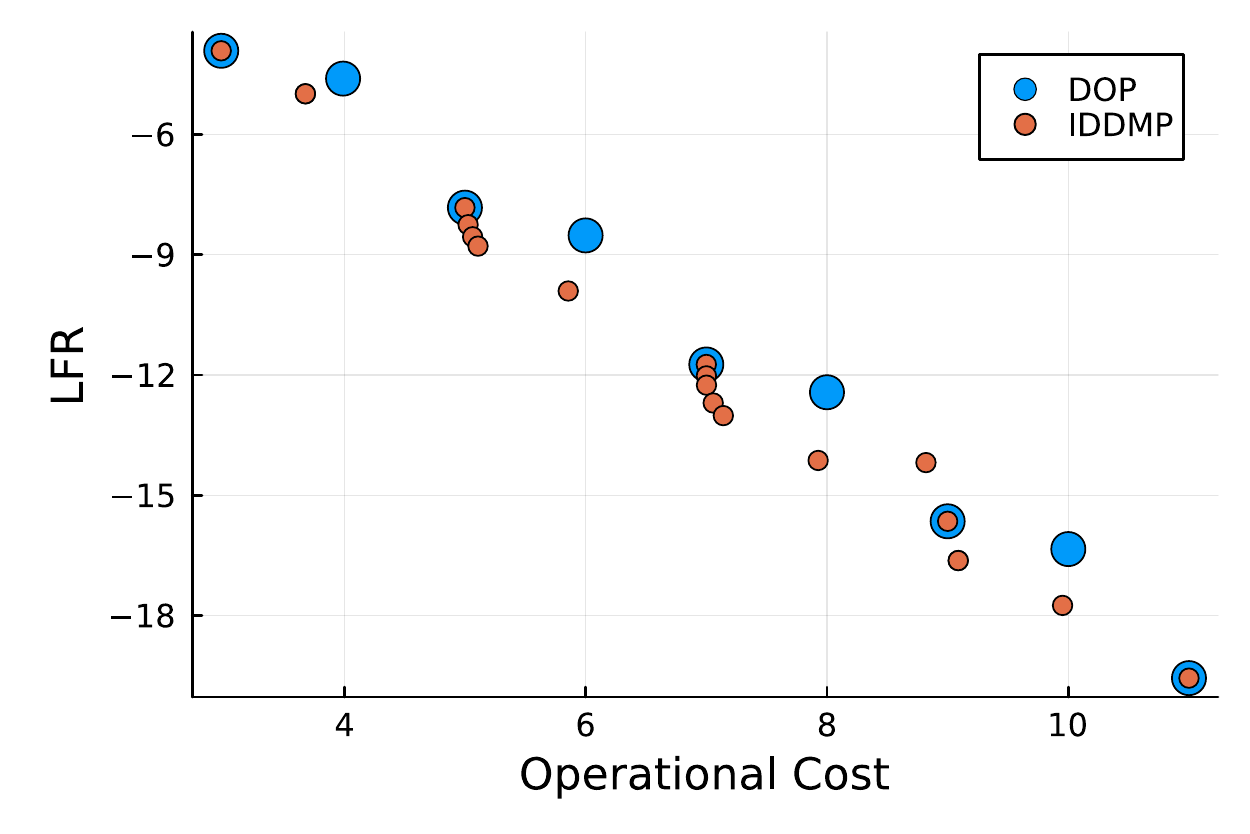}
    \subcaption{$5.0,5.0$}
\end{subfigure}%
\begin{subfigure}{.33\textwidth}
    \centering
    \includegraphics[width=\textwidth]{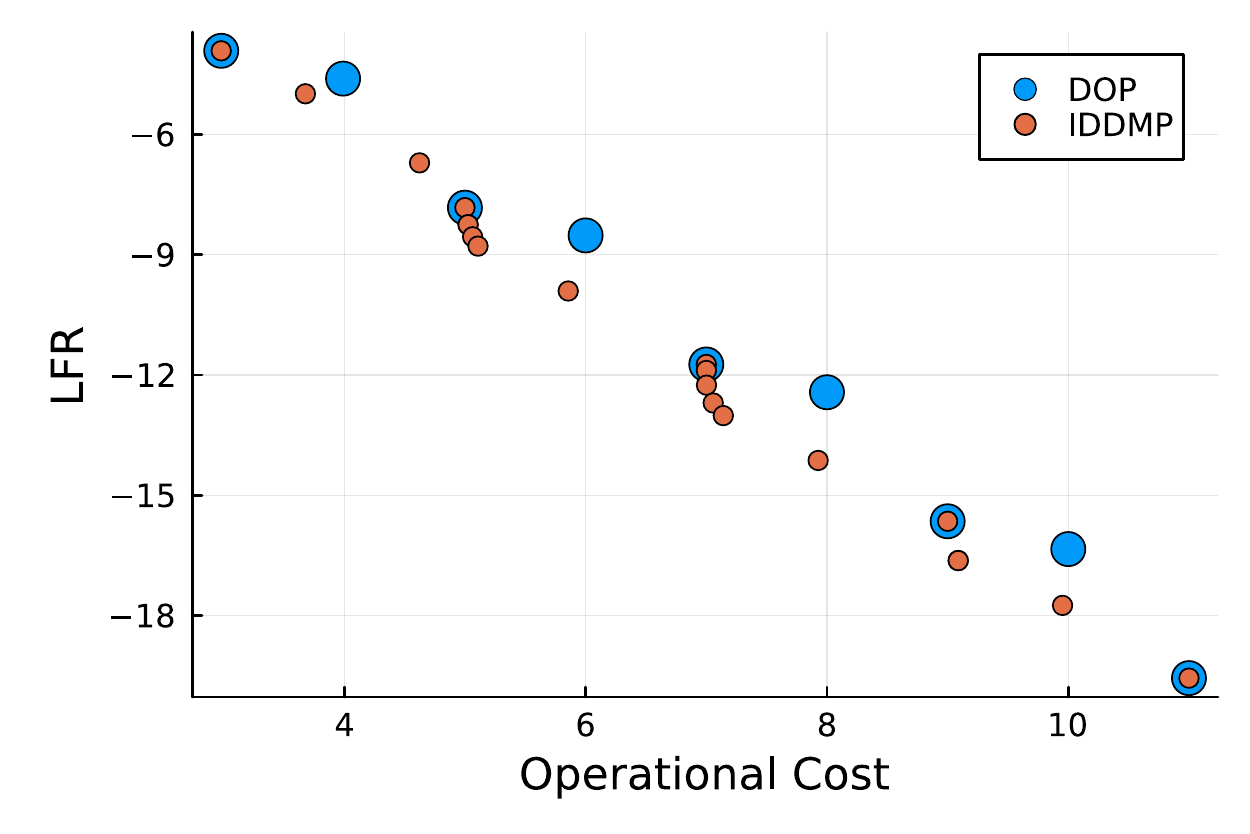}
    \subcaption{$5.0,10.0$}
\end{subfigure}
\begin{subfigure}{.33\textwidth}
    \raggedleft
    \includegraphics[width=\textwidth]{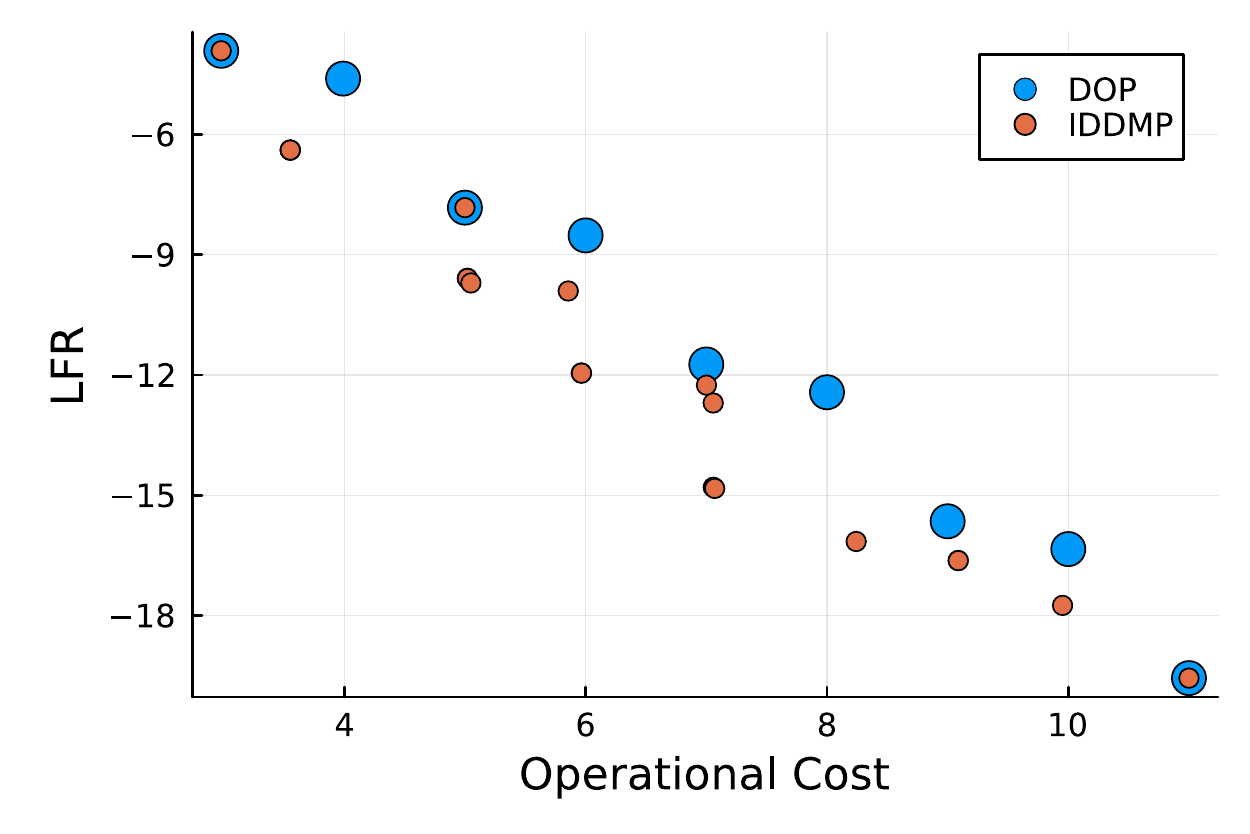}
    \subcaption{$10.0,1.0$}
\end{subfigure}%
\begin{subfigure}{.33\textwidth}
    \raggedleft
    \includegraphics[width=\textwidth]{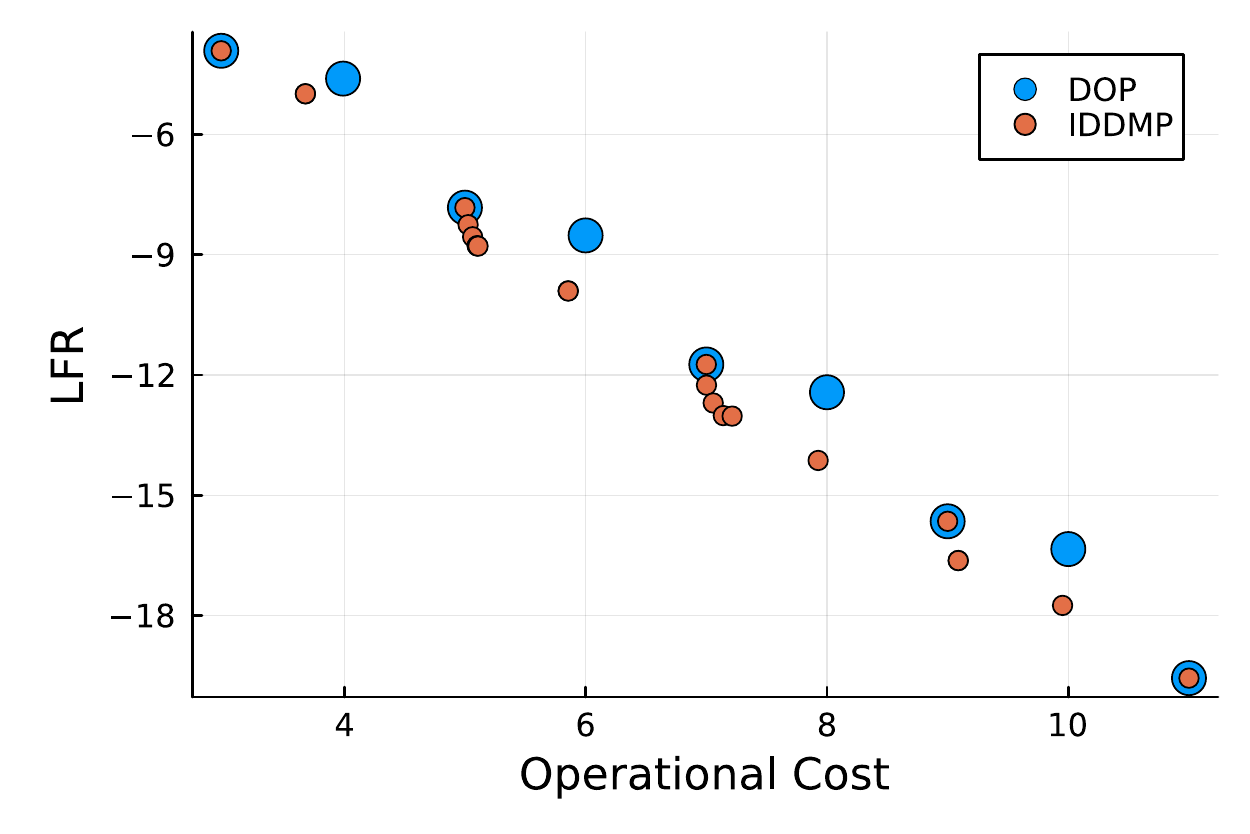}
    \subcaption{$10.0,5.0$}
\end{subfigure}%
\begin{subfigure}{.33\textwidth}
    \raggedleft
    \includegraphics[width=\textwidth]{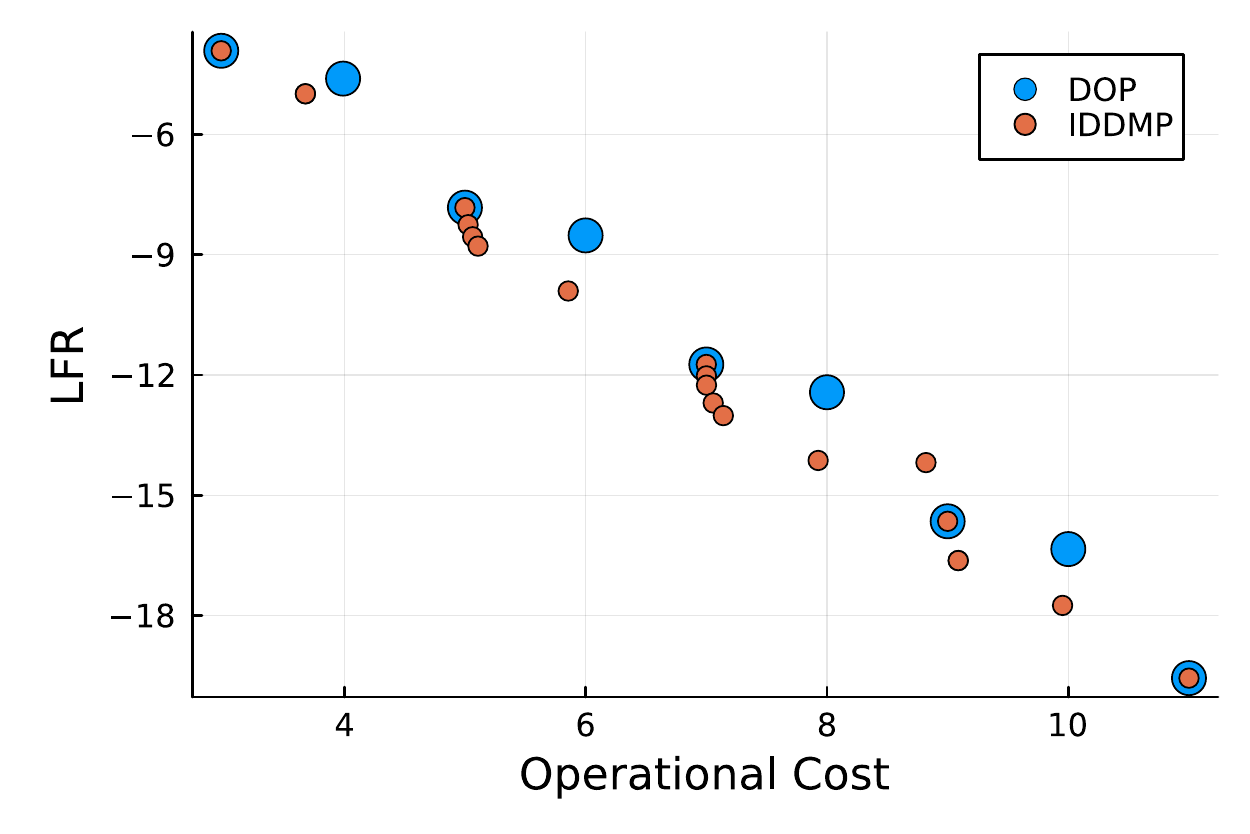}
    \subcaption{$10.0,10.0$}
\end{subfigure}
\label{fig:rateGrid}
\end{sidewaysfigure}

In this subsection we explore the effects of `event rate scaling', which refers to the simultaneous scaling of the $\alpha_i$ and $\tau_i$ parameters of individual component types. We again perform our analysis on instance 6-20, and further explore the $(r_1,r_2) = (300,100)$ case. If we multiply the values $\alpha_i$ and $\tau_i$ by some constant factor $m_i$, the reliability $p_i$ remains the same and therefore the underlying designs found by BO-DOP remain the same; however, the dynamic policies may yield different results. The effect of this multiplier is twofold: both failure and repair events happen faster, and the expected total cost of a single repair is affected, as this takes the value $r_i/(m_i\tau_i)$.

We consider multiplier values of 1.0, 5.0, and 10.0, and all combinations thereof, applied to components 1 and 2. As in the previous subsection, components 3 and 4 are ignored for this problem. \autoref{fig:rateGrid} shows the Pareto fronts for all combinations of the multipliers applied to components 1 and 2. It is immediately apparent that these fronts are not all the same, so the simultaneous scaling of these rates does indeed affect the outcomes of dynamic policies, despite not affecting the reliability value $p_i$. As previously mentioned, the design solutions are unaffected by the scaling of the rates, so the DOP solutions are the same across all cases. The fronts are identical for combinations (1,1), (5,5), and (10,10). This is because the ratio of the multipliers is equivalent across these cases, so these multipliers simply correspond to the re-scaling of time, and hence the solutions are unaffected. We also note that the fronts for combinations (5,10) and (10,5) are not dissimilar to the (1,1) front. Intuitively, this is because, under the previously described invariance due to scaling, these combinations are the same as (1,2) and (2,1) respectively. Such a difference may not be drastic enough to yield large changes to the Pareto fronts.

We recall from the main paper that for instance (6,20) the static solutions can be split into two groups: those without a copy of component 1, and those with a copy. We remarked previously that the static solutions with a copy of component 1 all became dominated by some dynamic policy. \autoref{fig:rateGrid} shows that this remains true across all cases for the multipliers. However, in some cases we see other DOP solutions become dominated as well. Case (1,10) sees two more DOP solutions become dominated by dynamic policies. Case (5,1) sees one extra DOP solution become dominated, which is especially interesting as this solution must use a design which includes component 1. So in this case, not only do we see DOP solutions with component 1 become dominated by dynamic policies applied to designs without that component, but we also see the opposite; that is, previously non-dominated DOP solutions without a copy of component 1 become dominated by a dynamic policy applied to a design that includes component 1. Case (10,1) takes this to the extreme, and sees all except three DOP solutions become dominated by dynamic policies. This suggests that the benefits of using dynamic policies may become stronger in situations where component types differ greatly with respect to frequencies of events occurring.

\end{document}